\pdfoutput=1
\documentclass[reqno,12pt]{amsart}

\usepackage[numbers]{natbib}

\usepackage[utf8]{inputenc}   
\usepackage[T1]{fontenc}      
\usepackage{hyperref}         
\usepackage{url}              
\usepackage{booktabs}         
\usepackage{amsfonts}         
\usepackage{nicefrac}         
\usepackage{microtype}        
\usepackage{xcolor}           
\usepackage{subcaption}
\usepackage[letterpaper, left=2.5cm, right=2.5cm, top=2.5cm,
bottom=2.5cm]{geometry}
\usepackage{amssymb,amsmath,amsthm,mathrsfs,mathtools,bm}

\usepackage{graphicx}
\usepackage{tabularx}
\usepackage{longtable}
\usepackage{array}
\newcolumntype{L}[1]{>{\raggedright\arraybackslash}p{#1}}

\usepackage{enumitem}
\hypersetup{colorlinks,citecolor=black,linkcolor=black,urlcolor=black}

\def\R{{\mathbb R}}
\def\M{{\mathcal M}}
\def\P{{\mathscr P}}
\def\X{{\mathcal X}}

\def\S{{\mathcal S}}

\def\E{{\mathbb E}}
\def\N{{\mathbb N}}

\def\Tr{\mathop{\rm Tr}}

\def\Var{{\rm Var}}

\def\Prob{{\mathbb P}}

\newtheorem{thm}{Theorem}[section]
\newtheorem{proposition}[thm]{Proposition}
\newtheorem{lemma}[thm]{Lemma}
\newtheorem{corollary}[thm]{Corollary}
\newtheorem{definition}[thm]{Definition}
\newtheorem{example}[thm]{Example}
\newtheorem{theorem}[thm]{Theorem}
\newtheorem{assumption}[thm]{Assumption}
\newtheorem{remark}[thm]{Remark}

\title[Operator Calculus for Population-Based Optimization]{Operator Calculus for Population-Based Optimization:\\
A Mean-Field Convergence Theory}

\author[P. Malo]{Pekka Malo\textsuperscript{1}}
\author[L. Viitasaari]{Lauri Viitasaari\textsuperscript{1}}
\author[P. Nummi]{Patrik Nummi\textsuperscript{2}}
\author[A. Suominen]{Antti Suominen\textsuperscript{1}}
\author[A. Sinha]{Ankur Sinha\textsuperscript{3}}
\author[O. Tahvonen]{Olli Tahvonen\textsuperscript{4}}

\thanks{\textsuperscript{1}Aalto University, Department of Information and Service Management, Finland}
\thanks{\textsuperscript{2}Aalto University, Department of Information and Communications Engineering, Finland}
\thanks{\textsuperscript{3}Indian Institute of Management Ahmedabad, Operations and Decision Sciences, India}
\thanks{\textsuperscript{4}University of Helsinki, Department of Economics and Management, Finland}

\begin{document}

\begin{abstract}
Population-based and distributional optimization methods, from evolution
strategies and consensus-based optimization to covariance-matrix adaptation
and stochastic gradient methods viewed as distributional dynamics, are widely
used for nonconvex or black-box problems, yet their convergence analyses
remain fragmented across algorithm-specific techniques. We introduce an
operator calculus in which a broad class of such methods, after choosing an
appropriate state space and, where necessary, augmenting the state by memory
or strategy variables, is described as a composition of three elementary
operators (\emph{mutation}, \emph{selection}, and \emph{recombination})
acting on probability measures. Under explicit stability and regularity
conditions, the composite operator admits a pre-generator whose continuous-time
limit is a transport--reaction--jump (TRJ) PDE that preserves the operator
splitting. On this foundation we establish a modular Lyapunov principle. If
a state-space Lyapunov function both dissipates under the full generator and
controls the relevant search-space gauges, then the state-space Lyapunov
functional and the induced search errors decay exponentially. The additive
generator structure allows dissipation estimates to be assembled operator by
operator, providing a toolkit for certifying convergence of composite
mean-field algorithms.
\end{abstract}

\maketitle

\tableofcontents

\section{Introduction}\label{sec:intro}

Population-based and distributional optimization methods form a broad family,
from evolution strategies and consensus-based optimization (CBO) to
covariance-matrix adaptation (CMA-ES), particle swarm methods, and stochastic
gradient algorithms viewed as distributional dynamics
\citep{beyerEvolutionStrategiesComprehensive2002,eibenIntroductionEvolutionaryComputing2015,huMeanfieldLangevinDynamics2021,hansenCMAEvolutionStrategy2023}. Despite their empirical success, convergence analyses remain fragmented. Existing approaches include finite-state Markov chains for discrete genetic algorithms~\citep{rudolphConvergenceAnalysisCanonical1994}, infinite-population dynamical systems~\citep{voseSimpleGeneticAlgorithm1998}, runtime analysis on combinatorial domains~\citep{neumannBioinspiredComputationCombinatorial2010,corusLevelBasedAnalysisGenetic2018}, and algorithm-specific mean-field limits for CBO~\citep{pinnauConsensusbasedModelGlobal2017,fornasierConsensusBasedOptimizationMethods2024}. To the best of our knowledge, no existing theory provides a common analytical language that covers both derivative-free population methods and gradient-based optimizers within a single framework.

This paper introduces such a framework. The central observation is that a wide range of population-based methods, from classical evolutionary algorithms to adaptive gradient methods, share a common structure when viewed at the level of probability measures. Each generation applies a composition of three elementary operators: \emph{mutation} (stochastic perturbation), \emph{selection} (fitness-dependent reweighting), and \emph{recombination} (mixing of parent information). By studying the small-step limit of this composition, we obtain a continuous-time evolution equation, the transport--reaction--jump (TRJ) equation, that decomposes additively into a transport term from mutation, a reaction term from selection, and a jump term from recombination. This decomposition gives each operator a well-defined infinitesimal generator, and the generator of the full algorithm is the sum of the component generators.

On this foundation we build a modular Lyapunov convergence theory. If a
state-space Lyapunov function satisfies a closed dissipation inequality for
the full generator and controls the relevant objective-gap and concentration
gauges on the search space, then the Lyapunov functional and the induced
optimization errors decay exponentially. The key practical advantage is
modularity: because the generator is additive, dissipation estimates can be
verified operator by operator and then closed by summation. This provides a concrete toolkit for certifying convergence of existing algorithms and for analyzing new ones. A practitioner modifying the selection mechanism, for instance, needs to only re-verify the selection generator bound.

An important design feature is the distinction between the algorithm's internal \emph{state space} and the \emph{search space} where the objective is evaluated. A sampling kernel maps internal states to candidate distributions, placing parametric methods (CMA-ES, NES, EDAs) on equal footing with nonparametric ones (genetic algorithms, evolution strategies, differential evolution). A detailed discussion of related work, including how representative algorithm families (CBO, CMA-ES, recombinative ES, and stochastic gradient methods) fit into the present framework, is given in Appendix~\ref{app:related-work}. The present submission focuses on the operator calculus and the mean-field Lyapunov theorem; finite-particle discovery bounds and detailed verification of the closed dissipation hypothesis for individual algorithm families are outside the scope of this paper and are deferred to future work.

\section{Problem Setup and Convergence Modes}\label{sec:setup}

\subsubsection*{Notation.}
We write $\|\cdot\|$ for the Euclidean norm on $\R^n$ (dimension clear from context), $B_r(y):=\{z:\|z-y\|<r\}$ for the open ball, $\mathcal{B}(\S)$ for the Borel $\sigma$-algebra on a subset $\S\subseteq\R^d$, and $\delta_x$ for the Dirac mass at~$x$. For a measure~$\mu$ and a measurable function~$\varphi$, the integral pairing is $\langle\varphi,\mu\rangle:=\int\varphi\,d\mu$. The space $C_b^k(\X)$ consists of $k$-times continuously differentiable functions with bounded derivatives up to order~$k$. We denote by $\P_q(\X)$ the set of Borel probability measures on~$\X$ with finite $q$-th moment, i.e.\ $\int\|x\|^q\,d\mu<\infty$, and write $[a]_+:=\max(a,0)$. A self-contained summary of the measure-theoretic background used throughout the paper --- the cones $\M_{q,+}(\X)$ and their moment sublevels, the weighted bounded-Lipschitz distance, the Wasserstein distances, and the population metric --- is collected in Appendix~\ref{app:prelim}.

We consider the problem of minimizing an objective $f\colon\S\to\R\cup\{+\infty\}$ over a search space $\S\subset\R^d$, where $f$ may be nonconvex and nonsmooth. We write $f_*:=\inf_{y\in\S} f(y)$ for the global infimum, $\S_*:=\{y\in\S\colon f(y)=f_*\}$ for the set of global minimizers, and $\S_*^\varepsilon:=\{y\in\S\colon f(y)\le f_*+\varepsilon\}$ for the $\varepsilon$-optimal sublevel set. We impose two sets of regularity conditions on the landscape.

\begin{assumption}[Fitness landscape regularity]\label{assumption:landscape}
There exist constants $L_f,c_\ell,c_u>0$, $s\ge0$, and $u\ge\ell > 0$ such that for all $y,y'\in\S$:
\begin{enumerate}[label=\textup{(\roman*)},leftmargin=*,itemsep=2pt]
\item Polynomially weighted Lipschitz continuity: $|f(y)-f(y')|\le L_f(1+\|y\|+\|y'\|)^s\|y-y'\|$.
\item Growth control: $f_*>-\infty$, and $c_\ell(\|y\|^\ell -1)\le f(y)-f_*\le c_u(1+\|y\|^u)$.
\end{enumerate}
\end{assumption}

Assumption~\ref{assumption:landscape} is standard in CBO and mean-field analyses~\citep{carrilloAnalyticalFrameworkConsensusbased2018,pinnauConsensusbasedModelGlobal2017}. The polynomial weight in~(i) accommodates objectives that grow at infinity; $s=0$ recovers the globally Lipschitz case, while larger $s$ allows polynomial growth, covering e.g.\ quartic double-well landscapes. Condition~(ii) ensures that sublevel sets are bounded and that the population maintains finite moments, which is essential for propagation-of-chaos estimates (Section~\ref{sec:lyapunov}).

\begin{assumption}[Basin regularity]\label{assumption:basin}
There exist a minimizer $y_*\in\S_*$, a basin radius $R_0>0$, a sharpness exponent $\nu\in(0,\infty)$, and constants $\eta,f_\infty>0$ such that:
\begin{enumerate}[label=\textup{(\roman*)},leftmargin=*,itemsep=2pt]
\item Sharpness near the basin. $\|y-y_*\|\le \eta^{-1}[f(y)-f_*]^\nu$ for all $y\in B_{R_0}(y_*)$.
\item Strict gap. $f(y)-f_*>f_\infty$ for all $y\notin B_{R_0}(y_*)$.
\end{enumerate}
\end{assumption}

Assumption~\ref{assumption:basin} appears in CBO analyses~\citep{fornasierConsensusBasedOptimizationMethods2024}. Sharpness~(i) is an inverse-continuity condition, requiring that within the basin $B_{R_0}(y_*)$ an individual whose objective value is close to the optimum must also be spatially close to $y_*$. The exponent $\nu$ controls the sharpness of this relationship ($\nu=\frac{1}{2}$ recovers quadratic growth near the minimum; larger $\nu$ allows flatter basins). The strict gap~(ii) ensures that all points outside the basin are strictly suboptimal by at least $f_\infty$, isolating the global basin from competing local minima. Together, the two assumptions guarantee that when the Lyapunov functional is small, the population is concentrated near $y_*$ both in objective value and in space. Figure~\ref{fig:basin-geometry} illustrates the geometry.

\begin{figure}[htbp]
\centering
\includegraphics[width=\linewidth]{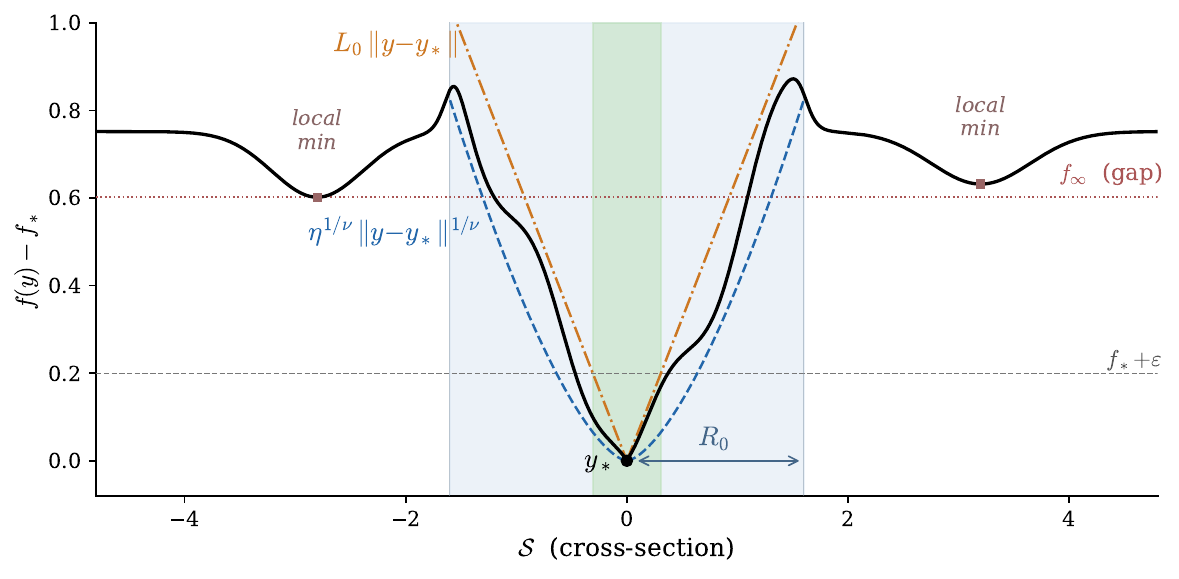}
\caption{Cross-section of a nonconvex fitness landscape satisfying Assumptions~\ref{assumption:landscape}--\ref{assumption:basin}. The blue-shaded region is the basin $B_{R_0}(y_*)$; the green-shaded region is the $\varepsilon$-optimal sublevel set $\S_*^\varepsilon$. The dashed blue curves show the sharpness envelope $\eta^{1/\nu}\|y-y_*\|^{1/\nu}$ (lower bound on $f-f_*$ inside the basin); the dash-dot orange curves show the Lipschitz envelope $L_0\|y-y_*\|$ (upper bound). The dotted red line marks the strict gap $f_\infty$: outside $B_{R_0}(y_*)$, all objective values exceed this threshold, isolating the global basin from competing local minima.}
\label{fig:basin-geometry}
\end{figure}

\subsubsection*{State space versus search space.}
The algorithm maintains a population described by a (possibly unnormalized) measure $\mu_t$ on an internal state space~$\X\subseteq\R^{d_x}$, which may differ from the search space~$\S$ where $f$ is evaluated (in general $d_x\ne d$). We fix a moment order $q\ge2$ and write $\M_{q,+}(\X)$ for the cone of nonnegative Borel measures on $\X$ with finite $q$-th moment and positive total mass. The normalization $\bar\mu_t:=\mu_t/\mu_t(\X)$ is a probability measure representing the population distribution. A \emph{sampling kernel} $K\colon\X\times\mathcal{B}(\S)\to[0,1]$ maps each internal state $x\in\X$ to a probability distribution $K(x,\cdot)$ on $\S$, and the \emph{induced search law} is
\begin{equation}\label{eq:search-law}
\nu_t := \Pi[\bar\mu_t], \qquad \Pi[\bar\mu](A) := \int_\X K(x,A)\,\bar\mu(dx).
\end{equation}
For nonparametric methods where the state is the candidate solution itself, $\X=\S$ and $K(x,\cdot)=\delta_x$, so $\nu_t=\bar\mu_t$. For parametric methods such as CMA-ES, $\X$ is a parameter space (encoding the mean, covariance, and step size of a Gaussian), and $K(x,\cdot)=\mathcal{N}(m(x),\sigma(x)^2 C(x))$ is the corresponding sampling distribution. This two-level structure, in which internal states govern the search distribution, is what allows parametric and nonparametric methods to be treated uniformly. The one-sample $\varepsilon$-success probability $p_\varepsilon(t):=\nu_t(\S_*^\varepsilon)$ measures the probability that a single draw from the current search law falls within the $\varepsilon$-optimal set; see Figure~\ref{fig:population-convergence} for an illustration.

\begin{figure}[htbp]
\centering
\includegraphics[width=\linewidth]{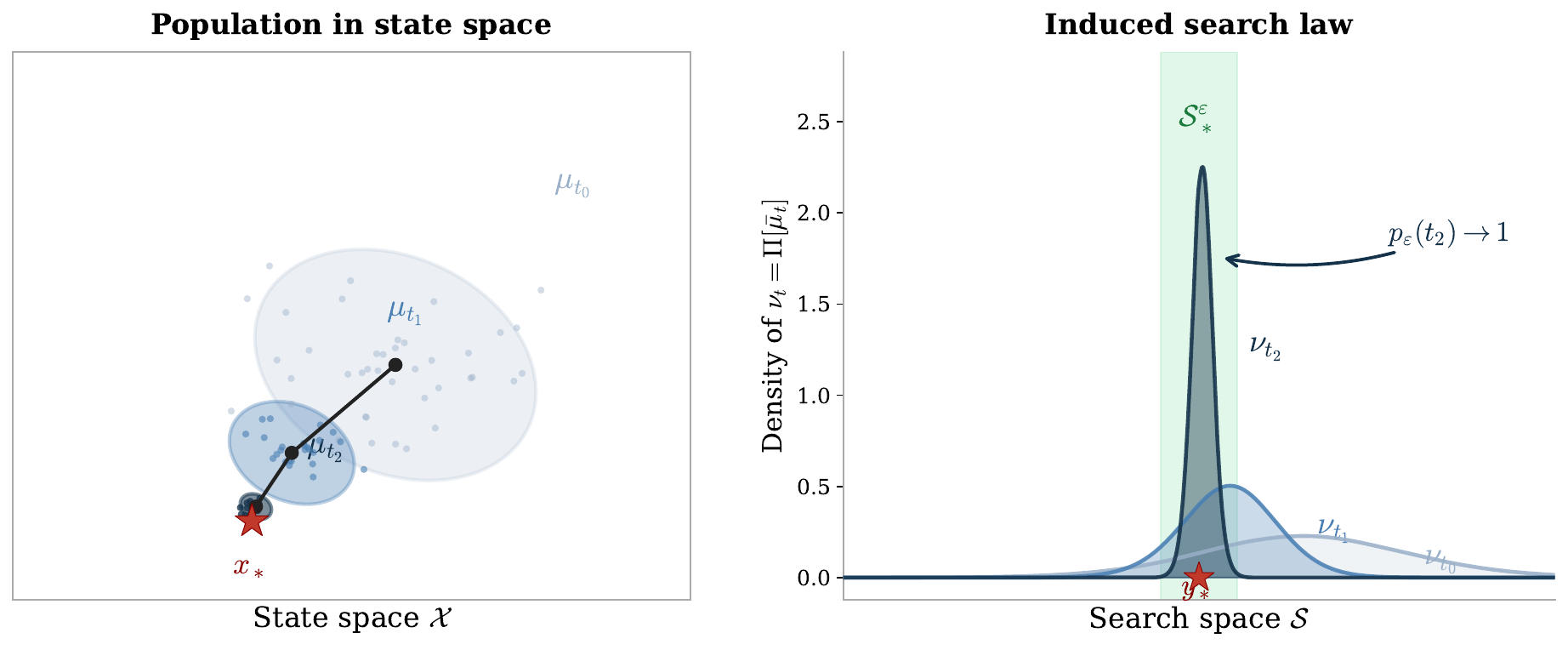}
\caption{Population convergence from state space to search space. \emph{Left:} A parametric algorithm (e.g., CMA-ES) maintains a population measure $\mu_t$ on the parameter space~$\X$; as the algorithm progresses ($t_0\to t_1\to t_2$), the distribution contracts around the optimal parameters~$x_*$. \emph{Right:} The sampling kernel $K$ maps $\mu_t$ to the induced search law $\nu_t=\Pi[\bar\mu_t]$ on $\S$, which concentrates on the $\varepsilon$-optimal set $\S_*^\varepsilon$ (green). The three convergence modes (MF1--MF3) describe progressively stronger notions of this concentration.}
\label{fig:population-convergence}
\end{figure}

\begin{definition}[Mean-field convergence modes]\label{def:mf-modes}
The mean-field curve achieves:
\begin{enumerate}[label=\textup{(MF\arabic*)},leftmargin=*,itemsep=2pt]
\item $\varepsilon$-concentration if $p_\varepsilon(t)\to1$ for every $\varepsilon>0$;
\item objective convergence if $\int f\,d\nu_t\to f_*$;
\item consensus if $\S_*=\{y_*\}$ and $W_2(\nu_t,\delta_{y_*})\to0$, where $W_2$ denotes the $2$-Wasserstein distance (Appendix~\ref{app:prelim}).
\end{enumerate}
\end{definition}

The modes form a natural hierarchy. Consensus (MF3) implies objective convergence (MF2), which implies $\varepsilon$-concentration (MF1), under Assumptions~\ref{assumption:landscape}--\ref{assumption:basin}. Consensus is the strongest mode but requires a unique minimizer. Objective convergence guarantees that the expected fitness approaches $f_*$ but says nothing about spatial concentration. The weakest yet most operationally useful mode is $\varepsilon$-concentration, because it directly controls the probability that a single sample lands in $\S_*^\varepsilon$. Our Lyapunov framework targets all three modes simultaneously (Theorem~\ref{thm:mean-field-convergence}), without requiring uniqueness of the minimizer or restrictive initializations.

\section{Operator Calculus and PDE Characterization}\label{sec:operators}

Rather than tracking individual candidates, we track the probability law of the population on $\M_{q,+}(\X)$. The key device is the \emph{pre-generator}, a nonlinear generalization of the infinitesimal generator from classical semigroup theory~\citep{ethierMarkovProcessesCharacterization}. In the familiar linear setting, the generator of a Markov process encodes the Kolmogorov forward (Fokker--Planck) equation; here, the pre-generator plays the same role for population-level dynamics, but depends on the current population measure $\mu$, a hallmark of McKean--Vlasov-type mean-field models~\citep{ambrosioGradientFlowsMetric2005,carmonaProbabilisticTheoryMean2018}. The composition theorem (Theorem~\ref{thm:operator-composition}) shows that the generator of the full algorithmic macro-step is the sum of the component generators, yielding a transport--reaction--jump (TRJ) PDE amenable to Lyapunov and Gr\"onwall arguments.

\subsection{Population metric}\label{subsec:pop-metric}

An algorithmic macro-step is a composition of $J\ge1$ elementary operators, $T_\tau:=T_\tau^{(J)}\circ\cdots\circ T_\tau^{(1)}$, where each $T_\tau^{(j)}\colon\M_{q,+}(\X)\to\M_{q,+}(\X)$ is one of the three primitives and $\tau>0$ is the step size.

To study the small-step limit we need a metric on $\M_{q,+}(\X)$ that captures both spatial rearrangement of individuals (caused by mutation and recombination) and changes in total mass (caused by selection, which upweights fit individuals and downweights unfit ones). The $2$-Wasserstein distance $W_2$~\citep{ambrosioGradientFlowsMetric2005,villani2009optimal}, the standard metric for gradient flows in optimal transport, captures spatial rearrangement but is defined only for measures of equal total mass, so it cannot see mass changes from selection. We therefore supplement it with the \emph{weighted bounded-Lipschitz distance}
\begin{equation}\label{eq:dBLq}
d_{\mathrm{BL},q}(\mu,\nu):=\sup\bigl\{|\langle\varphi,\mu-\nu\rangle|\colon\|\varphi\|_{\mathrm{BL},q}\le1\bigr\},
\end{equation}
with test-function norm
\begin{equation}\label{eq:BL-norm}
\|\varphi\|_{\mathrm{BL},q}
:=\sup_{x}\frac{|\varphi(x)|}{1+\|x\|^q}
\;+\;\sup_{x\neq y}\frac{|\varphi(x)-\varphi(y)|}{\|x-y\|\,(1+\|x\|^{q-1}+\|y\|^{q-1})}.
\end{equation}
The first term bounds the pointwise size of $\varphi$ relative to the $q$-th moment weight, and the second bounds its Lipschitz constant with a compatible polynomial weight. This distance metrizes weak convergence together with convergence of $q$-th moments, and is well defined for measures of different total mass.

The \emph{population metric} combines both:
\begin{equation}\label{eq:pop-metric}
d_{q,2}^*(\mu,\nu):=d_{\mathrm{BL},q}(\mu,\nu)+W_2(\bar\mu,\bar\nu).
\end{equation}
Two populations are close in $d_{q,2}^*$ only when they agree both in their total-mass profile (captured by $d_{\mathrm{BL},q}$, which is sensitive to the mass changes introduced by selection) and in the spatial arrangement of the normalized distribution (captured by $W_2$, which reflects the displacements from mutation and recombination). Neither component alone suffices: $W_2$ cannot distinguish populations that differ only in total mass, while $d_{\mathrm{BL},q}$ is insensitive to fine spatial rearrangements.

\subsection{Pre-generators}\label{subsec:pregen-main}

Rather than studying the operator $T_\tau^{(j)}$ directly, we work with its infinitesimal effect on population averages. For a test function $\varphi\in\mathcal{D}$, where $\mathcal{D}:=C_b^3(\X)$ denotes the space of bounded, three-times continuously differentiable functions on $\X$ and $\mathcal D':=(\mathcal D)'$ denotes its topological dual --- so that an element of $\mathcal D'$ is a continuous linear functional on test functions, paired with $\varphi\in\mathcal D$ via $\langle\varphi,\cdot\rangle$ (see Appendix~\ref{app:prelim}) --- the \emph{pre-generator} of $T_\tau^{(j)}$ is the functional
\[
G_j[\mu](\varphi) \;:=\; \lim_{\tau\downarrow0}\frac{\langle\varphi,\,T_\tau^{(j)}[\mu]\rangle - \langle\varphi,\mu\rangle}{\tau},
\]
which captures the instantaneous rate of change of the average $\langle\varphi,\mu\rangle$ under the action of operator~$j$. In the classical setting of a Markov diffusion, the generator is the second-order differential operator from It\^{o}'s formula, whose dual is the Fokker--Planck operator~\citep{ethierMarkovProcessesCharacterization}. The notation $G_j[\mu]$ emphasizes that, unlike this classical case, the pre-generator depends on the entire population $\mu$ through law-dependent coefficients (for instance, the consensus point in CBO, the selection pressure, or the population-dependent diffusion matrix), precisely as in McKean--Vlasov SDEs. This nonlinearity is the hallmark of mean-field interaction and is what distinguishes the present framework from standard Markov semigroup theory.

\begin{assumption}[Composition-admissible operator splitting]\label{assumption:composite}
Each component $T_\tau^{(j)}$ satisfies the following uniformity and stability conditions on the moment sublevels 
$$
\M_{q,+}^r:=\{\mu\in\M_{q,+}(\X)\colon\int(1+\|x\|^q)\,d\mu\le r\},
$$ 
with the intermediate measures defined by $\mu^{(0)}:=\mu$, $\mu^{(j)}:=T_\tau^{(j)}[\mu^{(j-1)}]$ for $j=1,\dots,J$.
\begin{enumerate}[label=\textup{(A\arabic*)},leftmargin=*,itemsep=2pt]
\item \emph{Existence of component pre-generators (uniform on sublevels).} For each $j$ there exists $G_j\colon\M_{q,+}(\X)\to\mathcal D'$ such that, for every $\varphi\in\mathcal D$ and every $r<\infty$,
\[
\sup_{\mu\in\M_{q,+}^r}\bigl|\tau^{-1}(\langle\varphi,T_\tau^{(j)}[\mu]\rangle-\langle\varphi,\mu\rangle)-G_j[\mu](\varphi)\bigr|\to 0
\]
as $\tau\downarrow 0$.
\item \emph{Moment stability (and positivity) along the splitting.} For every $r<\infty$ there exist $r'(r)\in(r,\infty)$ and $\tau_0(r)>0$ such that $\mu^{(j)}\in\M_{q,+}^{r'(r)}$ for all $\tau\in(0,\tau_0(r)]$, $\mu\in\M_{q,+}^r$, and $j=0,1,\dots,J$.
\item \emph{Near-identity (small-step) property with component rates.} For every $r<\infty$ and each $j$ there exist $r'(r)\in(r,\infty)$, $\tau_0(r)>0$,
a constant $C_j(r)<\infty$, and an exponent $\alpha_j\in(0,1]$ such that for all
$\tau\in(0,\tau_0(r)]$,
\[
\sup_{\mu\in \M_{q,+}^{r'(r)}}
d_{q,2}^*\!\left(T_\tau^{(j)}[\mu],\mu\right)\ \le\ C_j(r)\,\tau^{\alpha_j}.
\]
\item \emph{Local Lipschitz continuity of the pre-generators in $d_{q,2}^*$.} For every $r<\infty$, each $j$, and each fixed $\varphi\in\mathcal D$ there exists $L_{j,\varphi}(r)<\infty$ such that $|G_j[\mu](\varphi)-G_j[\nu](\varphi)|\le L_{j,\varphi}(r)\,d_{q,2}^*(\mu,\nu)$ for all $\mu,\nu\in\M_{q,+}^{r'(r)}$.
\end{enumerate}
\end{assumption}

\subsubsection*{Intuition behind (A1)--(A4).} These four conditions are the discrete-time analogue of the standard well-posedness assumptions for an ordinary differential equation with a Lipschitz right-hand side, lifted to the space of populations. Loosely: (A1) says each operator has a well-defined ``time derivative'' at $\tau=0$, so the discrete step $T_\tau^{(j)}$ admits a meaningful continuous-time counterpart $G_j$ (the analogue of the vector field in an ODE). (A2) guarantees that one step does not blow moments up to infinity, i.e., a population with finite spread stays in a bounded region of population space; without this the pre-generator could be unbounded along the trajectory. (A3) controls the step size: a $\tau$-step moves the population by at most $O(\tau^{\alpha_j})$ in the population metric, so short steps produce small changes and the $\tau\downarrow0$ limit is meaningful. (A4) is the usual Lipschitz condition: the infinitesimal update $G_j[\mu]$ varies continuously with the population state $\mu$, which prevents trajectories from branching and is what lets us compose operators by summing their generators in Theorem~\ref{thm:operator-composition}. 
Verifying them for a concrete operator is a routine calculation that we carry out once and for all for mutation, selection, and recombination in Appendix~\ref{app:operators}.

\subsection{Composition theorem}\label{subsec:composition-main}

The central result of the operator calculus is that the generator of a composed operator is the sum of the component generators, an analogue of the Lie--Trotter formula for nonlinear, measure-valued operators. This additivity is what makes the subsequent Lyapunov analysis modular: each operator's contribution to dissipation can be analyzed independently and then summed.

\begin{theorem}[Generator of the composition]\label{thm:operator-composition}
Under Assumption~\ref{assumption:composite}, the composite operator $T_\tau$ admits a pre-generator $G[\mu]:=\sum_{j=1}^J G_j[\mu]$, in the sense that for every $r<\infty$ and every test function $\varphi\in\mathcal{D}$,
\[
\sup_{\mu\in\M_{q,+}^r}\left|\frac{\langle\varphi,T_\tau[\mu]\rangle - \langle\varphi,\mu\rangle}{\tau} - \sum_{j=1}^J G_j[\mu](\varphi)\right| \xrightarrow[\tau\downarrow0]{} 0.
\]
\end{theorem}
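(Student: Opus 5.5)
The plan is a telescoping decomposition along the intermediate measures $\mu^{(0)}=\mu,\mu^{(1)},\dots,\mu^{(J)}=T_\tau[\mu]$. For fixed $\varphi\in\mathcal D$ and $\mu\in\M_{q,+}^r$ write
\[
\frac{\langle\varphi,T_\tau[\mu]\rangle-\langle\varphi,\mu\rangle}{\tau}-\sum_{j=1}^J G_j[\mu](\varphi)
=\sum_{j=1}^J\Bigl(E_j^{(1)}(\tau,\mu)+E_j^{(2)}(\tau,\mu)\Bigr),
\]
where
\[
E_j^{(1)}:=\tau^{-1}\bigl(\langle\varphi,T^{(j)}_\tau[\mu^{(j-1)}]\rangle-\langle\varphi,\mu^{(j-1)}\rangle\bigr)-G_j[\mu^{(j-1)}](\varphi)
\]
is the local consistency error of operator $j$ evaluated at the intermediate state, and
\[
E_j^{(2)}:=G_j[\mu^{(j-1)}](\varphi)-G_j[\mu](\varphi)
\]
is the error from freezing the generator at the initial state. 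Since $J$ is finite, it suffices to show that $\sup_{\mu\in\M^r_{q,+}}|E_j^{(i)}|\to0$ for each $j$ and $i=1,2$.

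For $E_j^{(1)}$, use (A2). For $\tau\le\tau_0(r)$, every intermediate measure $\mu^{(j-1)}$ lies in the fixed sublevel $\M_{q,+}^{r'(r)}$. Hence
\[
\sup_{\mu\in\M^r_{q,+}}|E^{(1)}_j|\le\sup_{\nu\in\M^{r'(r)}_{q,+}}\bigl|\tau^{-1}(\langle\varphi,T^{(j)}_\tau[\nu]\rangle-\langle\varphi,\nu\rangle)-G_j[\nu](\varphi)\bigr|,
\]
and this tends to $0$ by (A1) applied with radius $r'(r)$. This is exactly why (A1) is stated uniformly on sublevels: it is what allows the sublevel to be enlarged from $r$ to $r'(r)$.

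For $E_j^{(2)}$, use (A4) on $\M^{r'(r)}_{q,+}$, which contains both $\mu$ and $\mu^{(j-1)}$ by (A2). This gives $|E^{(2)}_j|\le L_{j,\varphi}(r)\,d^*_{q,2}(\mu^{(j-1)},\mu)$. Then apply the triangle inequality for $d^*_{q,2}$ together with (A3) to each step $\mu^{(k-1)}\mapsto\mu^{(k)}$:
\[
d^*_{q,2}(\mu^{(j-1)},\mu)\le\sum_{k<j}d^*_{q,2}\bigl(T^{(k)}_\tau[\mu^{(k-1)}],\mu^{(k-1)}\bigr)\le\sum_{k<j}C_k(r)\tau^{\alpha_k}.
\]
This bound is uniform in $\mu\in\M^r_{q,+}$ and tends to $0$ because every $\alpha_k>0$. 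Summing over $j$ finishes the proof.

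The main obstacle is bookkeeping of radii. (A2), (A3) and (A4) each supply their own $r'(r)$ and $\tau_0(r)$, and (A3) takes its supremum over $\M^{r'(r)}_{q,+}$. I would fix a single radius $R$ equal to the largest of these $r'(r)$, and take $\tau$ below the minimum of all the $\tau_0$'s. If the (A3) and (A4) constants are attached to a smaller radius than the one (A2) places the intermediates in, I would instead apply (A1), (A3) and (A4) with base radius $R$ in place of $r$. Since the $r'$ in (A4) exceeds its base radius, the needed sublevels are still covered. I would also check that $d^*_{q,2}$ satisfies the triangle inequality on the relevant sublevel; both components are metrics, and positivity of mass from (A2) ensures $\bar\mu^{(k)}$ is defined.
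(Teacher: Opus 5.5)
Your proposal is correct and is essentially the paper's proof. The paper uses the same telescoping along $\mu^{(0)},\dots,\mu^{(J)}$, and your $E_j^{(1)}$ and $E_j^{(2)}$ are exactly its remainder sum $I_2$ (handled by (A2) plus (A1) on the enlarged sublevel) and its generator-drift sum $I_1$ (handled by (A2), the triangle inequality with (A3), and (A4)), giving the same $O(\tau^{\alpha_{\min}})$ residual. Your explicit bookkeeping of possibly different radii $r'(r)$ and thresholds $\tau_0(r)$ is, if anything, more careful than the paper, which uses a single $r'$ and writes the constants as $C_k(r')$ and $L_{j,\varphi}(r')$.
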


The proof (Section~\ref{app:composition-proof}) telescopes through intermediate measures $\mu^{(j)}:=T_\tau^{(j)}[\mu^{(j-1)}]$. For each $j$, the difference $\langle\varphi,\mu^{(j)}\rangle-\langle\varphi,\mu^{(j-1)}\rangle$ is approximated by $\tau G_j[\mu^{(j-1)}](\varphi)$ via~(A1). The key difficulty is that $G_j$ is evaluated at $\mu^{(j-1)}$ rather than at the original $\mu$; the near-identity property~(A3) and Lipschitz continuity~(A4) control this drift, yielding an $O(\tau^{\alpha_{\min}})$ residual that vanishes in the limit.

\subsection{General evolution equation and induced search-law dynamics}\label{subsec:general-evolution}

In the continuous-time limit, Theorem~\ref{thm:operator-composition} shows that the measure-valued curve $\mu_t$ satisfies the \emph{general evolution equation}
\begin{equation}\label{eq:general-evolution}
\frac{d}{dt}\langle\varphi,\mu_t\rangle = G[\mu_t](\varphi) = \sum_{j=1}^J G_j[\mu_t](\varphi)
\end{equation}
for all $\varphi\in\mathcal{D}$. This is the \emph{$\mathcal D$-weak} form of a nonlinear PDE on the space of measures, generalizing the Kolmogorov forward equation. Here ``$\mathcal D$-weak'' means tested against $\varphi\in\mathcal D=C_b^3(\X)$, which is stronger than the classical distributional (tested against $C_c^\infty$) notion and adapted to measures that need not admit densities; see Section~\ref{app:D-weak-TRJ} for the precise definition. When only a single diffusion-type operator is present, one recovers the classical Fokker--Planck equation; the additive structure $\sum_j G_j$ makes the multi-operator setting tractable. The following corollary translates the state-space dynamics to the search space.

\begin{corollary}[Induced evolution on $\S$]\label{cor:induced-S}
Let $(\bar\mu_t)_{t\ge0}\subset\P_q(\X)$ be a probability-valued solution of~\eqref{eq:general-evolution}, let $\nu_t:=\Pi[\bar\mu_t]$ be the search law~\eqref{eq:search-law}, and let
$(K_*\phi)(x):=\int_\S\phi(y)\,K(x,dy)$ denote the kernel lift.
Then for every $\phi\in C_b(\S)$ such that $K_*\phi\in\mathcal{D}$, the map $t\mapsto\langle\phi,\nu_t\rangle$ is locally absolutely continuous and
\[
\frac{d}{dt}\langle\phi,\nu_t\rangle = G[\bar\mu_t](K_*\phi)\qquad\text{for a.e. }t\ge0.
\]
\end{corollary}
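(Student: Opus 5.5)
The plan is to reduce the statement to the definition of a solution of \eqref{eq:general-evolution} by a Fubini-type duality identity, and then read off regularity and the derivative from the state-space equation. The first step is the identity
\[
\langle\phi,\nu_t\rangle=\int_\S\phi\,d\Pi[\bar\mu_t]=\int_\X\int_\S\phi(y)\,K(x,dy)\,\bar\mu_t(dx)=\langle K_*\phi,\bar\mu_t\rangle .
\]
This holds first for indicators $\phi=\mathbf 1_A$ by the definition \eqref{eq:search-law}, then for simple functions by linearity. It extends to all $\phi\in C_b(\S)$ by uniform approximation, or equivalently by monotone convergence applied to the positive and negative parts. Measurability of $x\mapsto K_*\phi(x)$ comes from the kernel property of $K$, and the integrals are finite because $|K_*\phi|\le\|\phi\|_\infty$ and $\bar\mu_t$ is a probability measure. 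This step is routine.

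Second, since $K_*\phi\in\mathcal D$ by hypothesis, it is an admissible test function in \eqref{eq:general-evolution}. I will use the precise $\mathcal D$-weak solution concept of Section~\ref{app:D-weak-TRJ}, which I expect to state that for each $\varphi\in\mathcal D$ the map $t\mapsto\langle\varphi,\bar\mu_t\rangle$ is locally absolutely continuous and satisfies the integral identity
\[
\langle\varphi,\bar\mu_t\rangle-\langle\varphi,\bar\mu_s\rangle=\int_s^t G[\bar\mu_r](\varphi)\,dr .
\]
Here $r\mapsto G[\bar\mu_r](\varphi)$ is locally integrable. Taking $\varphi=K_*\phi$ and using the first step, $t\mapsto\langle\phi,\nu_t\rangle$ equals $t\mapsto\langle K_*\phi,\bar\mu_t\rangle$. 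It therefore inherits local absolute continuity, and the Lebesgue differentiation theorem gives the derivative $G[\bar\mu_t](K_*\phi)$ for a.e.\ $t$.

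The main obstacle is that second step, if the definition in Section~\ref{app:D-weak-TRJ} asserts \eqref{eq:general-evolution} only pointwise or only in integrated form without stating integrability of the right-hand side. In that case I would supply local integrability (indeed local boundedness) of $r\mapsto G[\bar\mu_r](\varphi)$ directly. First I would show that the curve stays in a moment sublevel $\M_{q,+}^{r}$ on compact time intervals, using the a priori moment control implicit in (A2) or the $\P_q$ assumption together with continuity of the curve. Then (A4) makes $G[\cdot](\varphi)$ Lipschitz in $d^*_{q,2}$ on that sublevel, and continuity of the curve in $d^*_{q,2}$ yields continuity, hence boundedness, of $r\mapsto G[\bar\mu_r](\varphi)$ on compacts. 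With this, the fundamental theorem of calculus for absolutely continuous functions closes the argument. In fact the derivative then exists for every $t$, which is stronger than the a.e.\ statement claimed.
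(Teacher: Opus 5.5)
Your proposal is correct and follows the paper's proof essentially step for step. Both go through the Fubini identity $\langle\phi,\nu_t\rangle=\langle K_*\phi,\bar\mu_t\rangle$, test the $\mathcal D$-weak formulation against $K_*\phi$, and apply the fundamental theorem of calculus for absolutely continuous functions. Your fallback paragraph is not needed: Definition~\ref{def:D-weak-balanced-TRJ} already includes integrability of $t\mapsto G[\mu_t](\varphi)$ on $[0,T]$ and a uniform $q$-moment bound. Drop the claim that the derivative exists for every $t$: it would require continuity of $t\mapsto\bar\mu_t$ in $d^*_{q,2}$, which the solution concept does not provide, so only the a.e.\ statement is justified.
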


The corollary says that search-space statistics evolve by applying the state-space generator to the lifted observable $K_*\phi$ whenever the normalized evolution itself is probability-valued. For nonparametric methods the lift is the identity; for CMA-ES, parameter updates propagate through the Gaussian kernel to reshape the candidate distribution. This two-level view is what allows the Lyapunov analysis in Section~\ref{sec:lyapunov} to work uniformly across both families; see Section~\ref{app:induced-intuition} for details.

\subsection{Canonical operators and the TRJ equation}\label{subsec:TRJ}

Equation~\eqref{eq:general-evolution} holds for any splitting that satisfies the regularity conditions of Assumption~\ref{assumption:composite}. When the resulting solution curve is probability-valued, Corollary~\ref{cor:induced-S} transfers the same generator to the induced search law. We now specialize to the canonical three-operator splitting that covers the vast majority of population-based optimizers. Mutation perturbs individuals stochastically, selection biases the population toward fitter regions, and recombination mixes information from multiple parents. Each of these defines a measure-valued operator $T_\tau^{(j)}[\mu]$ that maps a population to its updated version after one $\tau$-step. Figure~\ref{fig:canonical-ops} illustrates the three signatures on a one-dimensional bimodal population: mutation shifts and broadens the density, selection reshapes it toward low-fitness regions, and recombination concentrates mass around the blend of parent modes. We describe the three operators below; their formal definitions, regularity assumptions, and representative examples are collected in Section~\ref{sec:canonical-detail}, and the pre-generator derivations and verification of~(A1)--(A4) are in Appendix~\ref{app:operators}.

\begin{figure}[htbp]
\centering
\includegraphics[width=\linewidth]{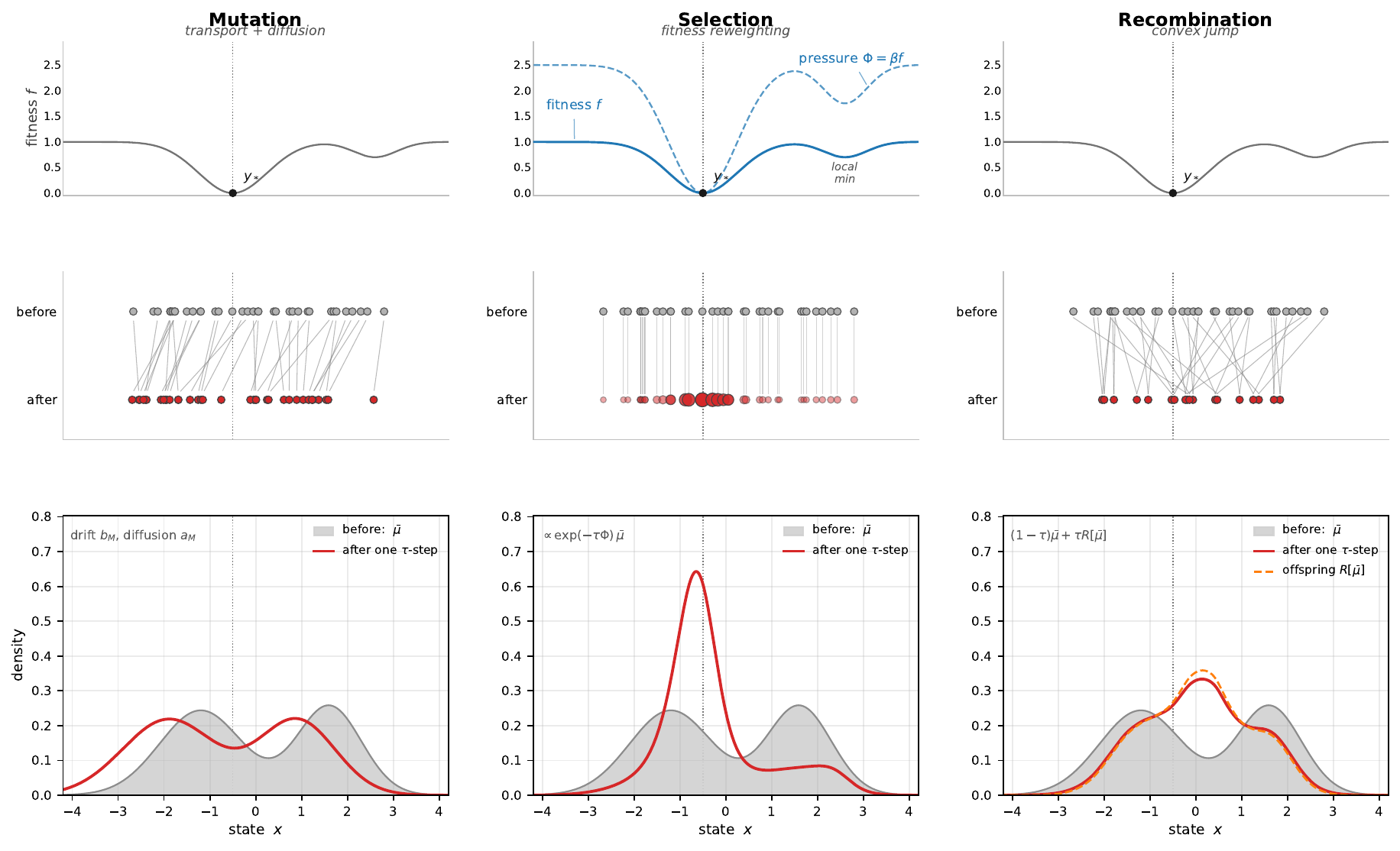}
\caption{One $\tau$-step of each canonical operator, shown at three coordinated levels that share the same fitness landscape. \emph{Top row:} the objective $f(x)$ that the algorithm is minimising; a nonconvex landscape (same visual convention as Figure~\ref{fig:basin-geometry}) with global minimum $y_*$ and a shallower, competing local minimum. A dotted vertical guide through $y_*$ threads all three rows. \emph{Middle row:} particle-level view with a sample of individuals from $\bar\mu$ (gray, \textit{before}) and after the operator (red, \textit{after}); light lines trace what happens to each individual. \emph{Bottom row:} the corresponding population densities before and after. \emph{Left (mutation $M_\tau$):} each particle independently drifts and diffuses, $x^+=x+\tau b_M+\sqrt{\tau}B_M\psi$, transporting and spreading $\bar\mu$. Mutation does not consult the landscape, which is drawn in gray as context only. \emph{Middle (selection $S_\tau^{\rm bal}$):} particles stay put but are rescaled by $\exp(-\tau\Phi)$ with pressure $\Phi=\beta f$ derived directly from the landscape (both curves shown in blue). Marker size encodes the survival weight; mass concentrates at $y_*$, with a residual bump in the shallower local basin. \emph{Right (recombination $R_\tau$):} parent pairs produce offspring at their midpoint ($Z=(X+Y)/2$), and the population interpolates toward the offspring law $R[\bar\mu]$ (orange, dashed). Recombination is again landscape-agnostic; the mass it accumulates between the parent modes sits close to $y_*$ only by construction of the example. The three effects are additive at the level of generators (Theorem~\ref{thm:operator-composition}) and sum to the TRJ equation~\eqref{eq:TRJ}.}
\label{fig:canonical-ops}
\end{figure}

\subsubsection*{Mutation.} Given a population $\mu\in\M_{q,+}(\X)$ with normalization $\bar\mu:=\mu/\mu(\X)$, the mutation operator $M_\tau$ is the pushforward induced by an SDE-style update with population-dependent drift $b_M(x;\bar\mu)$ and diffusion coefficient $B_M(x;\bar\mu)$:
\[
x^+ = x + \tau\,b_M(x;\bar\mu) + \sqrt{\tau}\,B_M(x;\bar\mu)\,\psi,
\qquad
M_\tau[\mu](A) := \int_\X p_\tau(x,A;\mu)\,\mu(dx),
\]
where $\psi$ is a centered noise increment with $\E[\psi\psi^\top]=\Sigma$, and $p_\tau(x,\cdot;\mu)$ is the law of $x^+$. The diffusion matrix is $a_M(x;\bar\mu):=B_M\Sigma B_M^\top$ (Fig.~\ref{fig:canonical-ops}, left). This is a single step of a McKean--Vlasov SDE, directly analogous to the noisy gradient step in Langevin dynamics and SGLD: $b_M$ plays the role of the drift (e.g., a gradient or consensus pull), $a_M$ of the noise covariance. The dependence on $\bar\mu$ is the source of the mean-field nonlinearity (in CBO, the drift targets a population-level consensus point; in CMA-ES, drift and diffusion depend on the adapted covariance). Its pre-generator is the second-order operator familiar from It\^{o} calculus, $G_M[\mu](\varphi)=\int(\nabla\varphi\cdot b_M + \tfrac{1}{2}\Tr(a_M D^2\varphi))\,d\mu$. Setting $B_M=0$ recovers deterministic gradient methods. See Section~\ref{subsec:mutation-main} for the formal definition, regularity assumptions, and examples, and Appendix~\ref{app:mutation} for the pre-generator verification.

\subsubsection*{Selection.} The selection operator $S_\tau$ reweights individuals by fitness without moving them in space, scaling mass by $\exp(-\tau\Phi)$ for a (bounded) pressure $\Phi(x;\bar\mu)$ so that fitter individuals gain weight (Fig.~\ref{fig:canonical-ops}, middle):
\[
S_\tau[\mu](dx) := e^{-\tau\,\Phi(x;\bar\mu)}\,\mu(dx),
\qquad
S_\tau^{\rm bal}[\mu] := \frac{\mu(\X)}{S_\tau[\mu](\X)}\,S_\tau[\mu],
\]
the latter being the mass-preserving \emph{balanced} variant. This is the same mechanism as importance reweighting and the softmax/Gibbs measures used to sharpen distributions in variational inference and policy optimisation; the temperature $\tau$ controls selective pressure. The balanced variant produces a replicator-type zero-sum reallocation~\citep{voseSimpleGeneticAlgorithm1998} that unifies tournament, truncation, utility, and softmax selection. Writing $\bar\Phi(\bar\mu):=\int\Phi(\cdot;\bar\mu)\,d\bar\mu$ for the population-mean pressure, the resulting pre-generator $G_S^{\rm bal}[\mu](\varphi)=-\int(\Phi-\bar\Phi)\varphi\,d\mu$ is a Fisher-Rao-type \emph{reaction} term analogous to the source terms used in unbalanced optimal transport~\citep{chizatInterpolatingDistanceOptimal2018}. See Section~\ref{subsec:selection-main} for the formal definition, regularity assumptions, and examples, and Appendix~\ref{app:selection} for the pre-generator verification.

\subsubsection*{Recombination.} The recombination operator $R_\tau$ mixes parent pairs into offspring. It is specified by a normalized \emph{pair law} $\overline{\Gamma^\mu}\in\P(\X\!\times\!\X)$ with both marginals equal to $\bar\mu$, a measurable \emph{mixing rule} $b\colon\X\!\times\!\X\!\times\!\Xi\to\X$ producing offspring $b(x,y;\xi)$ with $\xi\sim\Prob_\xi$, and an amplitude $\beta(\bar\mu)>0$. Writing $p_b((x,y),A):=\Prob_\xi(b(x,y;\xi)\in A)$ for the offspring kernel, the \emph{instantaneous offspring laws} and the $\tau$-step operator are
\begin{align*}
R^{\rm bal}[\mu](A) &:= \mu(\X)\!\iint_{\X\!\times\!\X}\!p_b((x,y),A)\,\overline{\Gamma^\mu}(dx,dy),
\qquad R[\mu] := \beta(\bar\mu)\,R^{\rm bal}[\mu],\\
R_\tau[\mu] &:= (1-\tau)\mu + \tau R[\mu],
\end{align*}
with the balanced variant $R_\tau^{\rm bal}$ obtained by setting $\beta\equiv 1$ (Fig.~\ref{fig:canonical-ops}, right). This covers crossover operators in GAs and DE, parameter-update steps in CMA-ES and NES, and has a direct analogue in mixup-style data augmentation, which takes convex combinations of pairs of training examples. Its pre-generator $G_R[\mu](\varphi)=\langle\varphi,R[\mu]\rangle-\langle\varphi,\mu\rangle$ is a \emph{jump} term, structurally reminiscent of Boltzmann-type collision operators in kinetic theory, in the limited sense that both encode nonlocal mass redistribution generated by interaction rules \citep{villani2002review}. Under a Lipschitz condition on the mixing map (Assumption~\ref{ass:recombination-section-ass}), $R_\tau$ is a bounded perturbation of the identity; for algorithms without crossover this term is absent. See Section~\ref{subsec:recombination-main} for the formal definitions, regularity assumptions, and examples, and Appendix~\ref{app:recombination} for the pre-generator verification.

A more detailed comparison with related particle and measure-valued approaches is deferred to Table \ref{tab:ops-full} (Appendix \ref{app:related-work}). 
The purpose of this comparison is not to give an exhaustive survey, but rather to locate the present framework among several adjacent lines of work: classical Wasserstein gradient-flow theory, unbalanced optimal-transport formulations, McKean--Vlasov mean-field limits, consensus-based optimization, and mirror or preconditioned descent methods on spaces of measures. 
The main distinction is that our perspective treats the particle dynamics and its limiting transport--reaction structure within a unified variational language, while retaining the algorithmic interpretation needed for optimization. 
This viewpoint clarifies which parts of the analysis are inherited from existing metric-gradient-flow theory and which parts are specific to the consensus-driven, nonlocal, and potentially non-mass-preserving nature of the dynamics.

\subsubsection*{Transport-reaction-jump (TRJ) equation.} For the canonical balanced splitting $T_\tau=M_\tau\circ R_\tau^{\rm bal}\circ S_\tau^{\rm bal}$ --- following the conventional EA order ``select, then recombine, then mutate'', with the composition read right-to-left --- each component satisfies Assumption~\ref{assumption:composite} (verified in Appendix~\ref{app:operators}). Since the composition theorem (Theorem~\ref{thm:operator-composition}) gives the generator as the sum of the component generators, the TRJ equation below is invariant under any reordering of $M_\tau$, $S_\tau^{\rm bal}$, $R_\tau^{\rm bal}$. Applying the composition theorem yields the following named decomposition.

\begin{theorem}[Transport--reaction--jump decomposition]\label{thm:TRJ-decomposition}
Let \(q\ge3\). Suppose the mutation, selection, and recombination
components satisfy the regularity assumptions verified in
Appendix~\ref{app:operators}
(Propositions~\ref{prop:app-mutation-gen}, \ref{prop:app-selection-gen},
\ref{prop:app-recomb-gen}), so that Assumption~\ref{assumption:composite}
holds for \(T_\tau=M_\tau\circ R_\tau^{\rm bal}\circ S_\tau^{\rm bal}\)
on every sublevel \(\M_{q,+}^{r,m_0}\). Then:
\begin{enumerate}[label=\textup{(\roman*)},leftmargin=*,itemsep=2pt]
\item \emph{(Additive pre-generator.)} $T_\tau$ has pre-generator $G[\mu]=G_M[\mu]+G_S^{\rm bal}[\mu]+G_R^{\rm bal}[\mu]$ in $\mathcal D'$.
\item \emph{(Mean-field equation.)} The pre-generator $G$ admits the explicit form recorded in Definition~\ref{def:D-weak-balanced-TRJ}; equivalently, the associated $\mathcal D$-weak evolution is $\partial_t\mu_t=G[\mu_t]$ in $\mathcal D'$, that is, the \emph{balanced TRJ equation}
\begin{equation}\label{eq:TRJ}
\begin{aligned}
\partial_t\mu_t
&=\underbrace{-\nabla\!\cdot\!\bigl(b_M(\cdot;\bar\mu_t)\mu_t\bigr)+\tfrac12\textstyle\sum_{i,j=1}^d\partial_{ij}\!\bigl(a_{M,ij}(\cdot;\bar\mu_t)\mu_t\bigr)}_{\text{transport--diffusion (mutation)}}\\
&\quad\underbrace{-\,\bigl(\Phi(\cdot;\mu_t)-\bar\Phi(\mu_t)\bigr)\mu_t}_{\text{reaction (selection)}}
+\underbrace{R^{\rm bal}[\mu_t]-\mu_t}_{\text{jump (recombination)}}
\end{aligned}
\end{equation}
in $\mathcal D'=(C_b^3(\X))'$, in the sense of Definition~\ref{def:D-weak-balanced-TRJ}, where $a_M=B_M\Sigma B_M^\top$ and $\bar\Phi(\mu):=\tfrac{1}{\mu(\X)}\int\Phi(x;\mu)\,\mu(dx)$.
\end{enumerate}
\end{theorem}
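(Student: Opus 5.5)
Part~(i) is a direct application of Theorem~\ref{thm:operator-composition}; part~(ii) comes from identifying each component pre-generator explicitly and rewriting the sum in divergence form. For~(i), the plan is to use Propositions~\ref{prop:app-mutation-gen}, \ref{prop:app-selection-gen} and~\ref{prop:app-recomb-gen}, which supply the component pre-generators $G_M$, $G_S^{\rm bal}$, $G_R^{\rm bal}$ together with (A1), (A3) and (A4) for each component on the sublevels $\M_{q,+}^{r,m_0}$. The one point that needs separate attention is (A2) along the specific splitting $S_\tau^{\rm bal}$, then $R_\tau^{\rm bal}$, then $M_\tau$, because the component propositions only give stability for each operator acting on its own input. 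I would chain them. Balanced selection preserves mass, and since $\Phi$ is bounded it inflates the $q$-th moment by at most a factor $e^{2\tau\|\Phi\|_\infty}$. Balanced recombination also preserves mass, and $R_\tau^{\rm bal}$ is a convex combination of $\mu$ and an offspring law whose $q$-th moment is controlled by the Lipschitz mixing rule. Mutation adds $O(\tau)$ to the $q$-th moment via the linear-growth bounds on $b_M$ and $a_M$. Composing these three estimates gives radii $r'(r)$ and thresholds $\tau_0(r)$. The lower mass bound $m_0$ is untouched, since every step is mass-preserving. Theorem~\ref{thm:operator-composition} then yields $G=G_M+G_S^{\rm bal}+G_R^{\rm bal}$ uniformly on sublevels. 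Because the sum commutes, the same generator arises for every ordering, which accounts for the stated reordering invariance.

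For~(ii), I would write out each term tested against $\varphi\in\mathcal D=C_b^3(\X)$. Mutation gives $\int(\nabla\varphi\cdot b_M+\tfrac12\Tr(a_MD^2\varphi))\,d\mu$. Formally integrating by parts turns this into the pairing of $\varphi$ with $-\nabla\!\cdot(b_M\mu)+\tfrac12\sum\partial_{ij}(a_{M,ij}\mu)$, which is well defined in $\mathcal D'$ without any density assumption: it is simply the definition of distributional derivatives against $C_b^3$ test functions. Finiteness of the pairing needs $|b_M|\lesssim 1+\|x\|$ and $|a_M|\lesssim1+\|x\|^2$, which is where $q\ge3$ gives enough margin. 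Selection gives $-\int(\Phi-\bar\Phi)\varphi\,d\mu$, i.e.\ the reaction term. Recombination gives $\langle\varphi,R^{\rm bal}[\mu]-\mu\rangle$, i.e.\ the jump term. Summing the three and comparing with Definition~\ref{def:D-weak-balanced-TRJ} yields \eqref{eq:TRJ}.

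Two further points need care. First, the normalization conventions should be checked: the statement writes $\Phi(\cdot;\mu_t)$, while the operator definition uses $\Phi(\cdot;\bar\mu)$. Since all operators are balanced, $\mu_t(\X)$ is constant along the flow, so the two agree up to this harmless identification. Second, and this is the main obstacle, one must verify that the chained moment bound in (A2) actually matches the sublevels on which (A3) and (A4) are asserted for the downstream operators. Concretely, the intermediate measure $\mu^{(1)}=S_\tau^{\rm bal}[\mu]$ must lie in $\M_{q,+}^{r'(r),m_0}$, so that the recombination estimates apply at that radius, and similarly for $\mu^{(2)}$ before mutation. I would handle this by enlarging $r'(r)$ to the maximum over the three successive radii and taking $\tau_0(r)$ to be the minimum of the three thresholds.
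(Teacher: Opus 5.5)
Your proposal follows the paper's proof. Part~(i) is Theorem~\ref{thm:operator-composition} applied to the component pre-generators of Propositions~\ref{prop:app-mutation-gen}--\ref{prop:app-recomb-gen}. Part~(ii) sums the explicit component formulas and reads the result through the duality conventions of Remark~\ref{rem:TRJ-duality}.

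\textbf{The step you call the main obstacle.} This is the one place where your reasoning goes wrong. The statement already assumes Assumption~\ref{assumption:composite} for the composite $T_\tau$ on every sublevel $\M_{q,+}^{r,m_0}$, and the paper simply invokes that hypothesis. So no chaining of (A2) is required. If you still want to derive it yourself, two of your justifications need repair:
\begin{enumerate}
\item \emph{Recombination.} The Lipschitz mixing condition, Assumption~\ref{ass:recombination-section-ass}(ii), is an $L^2$ estimate and controls at most second moments of the offspring. The $q$-moment bound for $R^{\rm bal}$ has to come from the moment-growth condition~(iii).
\item \emph{Mutation.} Proposition~\ref{prop:app-mutation-gen}(ii) is proved only on $\M_{3,+}^r$, using $\E\|\psi\|^3<\infty$. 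For $q>3$, finiteness of the $q$-th moment of $M_\tau[\mu]$ needs $\E\|\psi\|^q<\infty$, which Assumption~\ref{ass:mutation-diffusion} does not provide.
\end{enumerate}

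\textbf{Two smaller points on part~(ii).}
\begin{enumerate}
\item The paper also uses Lemma~\ref{lem:TRJ-scalar-AC} to show that $t\mapsto G[\mu_t](\varphi)$ is bounded along any curve in a fixed sublevel. This is what makes the integral identity and the a.e.\ derivative form in Definition~\ref{def:D-weak-balanced-TRJ} interchangeable; your sketch leaves it implicit.
\item No identification of $\Phi(\cdot;\mu)$ with $\Phi(\cdot;\bar\mu)$ is needed. The selection rate is defined on $\X\times\M_{q,+}(\X)$, and Proposition~\ref{prop:app-selection-gen}(i) already gives $-\int(\Phi(x;\mu)-\bar\Phi(\mu))\varphi(x)\,\mu(dx)$. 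Your argument via conserved total mass would not justify such an identification anyway, since conserved mass need not equal one.
\end{enumerate}
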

Equation~\eqref{eq:TRJ} is understood in the $\mathcal D$-weak sense, with $\mathcal D=C_b^3(\X)$. Its terms are defined by transposition against $\varphi\in\mathcal D$ as
$\langle\varphi,-\nabla\!\cdot(b_M\mu_t)\rangle
:=
\int_\X \nabla\varphi(x)\cdot b_M(x;\bar\mu_t)\,\mu_t(dx)$ and 
$\langle\varphi,
\frac12\sum_{i,j}\partial_{ij}(a_{M,ij}\mu_t)
\rangle
:=
\frac12\int_\X
\Tr\!\bigl(a_M(x;\bar\mu_t)D^2\varphi(x)\bigr)\,\mu_t(dx)$,
and similarly for the reaction and jump terms. These identities agree with
classical integration by parts whenever sufficient smoothness and boundary
conditions are available; for the bounded, non-compactly supported class
$C_b^3(\X)$ they serve as the definition of the weak action. Finiteness follows
from the moment condition \(q\ge3\), the polynomial growth assumptions on
\(b_M,a_M\), and the stated boundedness and moment assumptions on selection
and recombination.

The three terms have the usual PDE interpretations. Mutation gives a nonlinear
Fokker--Planck transport--diffusion operator with drift \(b_M\) and covariance
\(a_M\). Selection gives a Fisher--Rao-type replicator reaction driven by the
centered pressure \(\Phi-\bar\Phi\), where \(\bar\Phi\) enforces mass conservation.
Recombination gives a Boltzmann-type gain--loss jump operator, in the limited
sense that a pairwise kernel replaces parent states by offspring states in weak
form, without implying conservation of kinetic invariants beyond balanced mass.
With only mutation, and for the CBO coefficients, one recovers the nonlinear
mean-field Fokker--Planck equation of CBO~\citep{pinnauConsensusbasedModelGlobal2017,fornasierConsensusBasedOptimizationMethods2024}.
With mutation removed, one obtains a mass-preserving reaction--jump equation
of replicator type, the continuous-time mean-field analogue of
recombination--selection mechanisms in the simple genetic algorithm
formalism~\citep{voseSimpleGeneticAlgorithm1998}. Thus the TRJ equation
collects mutation, selection, and recombination into a single operator-splitting
structure. Corollary~\ref{cor:induced-S} gives the induced evolution of the
offspring law \(\nu_t\) on \(\S\), while Section~\ref{app:D-weak-TRJ}
contains the precise $\mathcal D$-weak formulation and the scalar
absolute-continuity lemma.
\section{Lyapunov Principle and Convergence}\label{sec:lyapunov}

A \emph{search-space gauge} $\Psi\colon\S\to[0,\infty]$ measures optimization error and vanishes on $\S_*$ (e.g.\ $\Psi(y)=f(y)-f_*$ or $\|y-y_*\|^2$). A \emph{state-space Lyapunov function} $\Upsilon\colon\X\to[0,\infty)$ tracks dissipation along the internal evolution. The sampling kernel connects the two via the \emph{lifted gauge} $\ell_\Psi(x):=\int_\S\Psi\,dK(x,\cdot)$. We write $\mathcal{V}_\Upsilon(\bar\mu):=\int\Upsilon\,d\bar\mu$ for the state-space Lyapunov functional and $\mathcal{E}_\Psi(\bar\mu):=\int\ell_\Psi\,d\bar\mu$ for the search error. 
In the convergence theorem below we use two gauges in parallel: the objective-gap gauge $\Psi_f(y):=f(y)-f_*$ and, relative to a chosen minimizer $y_*\in\S_*$, the quadratic gauge $\Psi_*(y):=\|y-y_*\|^2$. For nonparametric methods ($K=\delta$) one takes $\Upsilon=\Psi$, and the Lyapunov functional and the search error coincide; for parametric methods such as CMA-ES the natural Lyapunov function $\Upsilon(x)=\|m(x)-y_*\|^2+\sigma(x)^2\Tr C(x)$ equals the expected quadratic search error under the Gaussian sampling kernel, so again $\mathcal{V}_\Upsilon=\mathcal{E}_{\Psi_*}$.

\subsection{The mean-field convergence theorem}\label{subsec:lyap-theorem}

\begin{theorem}[Mean-field convergence]\label{thm:mean-field-convergence}
Let \(q\ge3\), and let \((\bar\mu_t)_{t\ge0}\subset\P_q(\X)\) solve the
balanced TRJ equation~\eqref{eq:TRJ} with uniformly controlled \(q\)-th
moments. Let \(\Upsilon\colon\X\to[0,\infty)\) satisfy
\(\Upsilon\in C^3(\X)\) and, for \(k=0,1,2,3\),
\[
\|D^k\Upsilon(x)\|_k \le C_k(1+\|x\|^{(q-k)_+}).
\]
Assume also that \(\mathcal V_\Upsilon(\bar\mu_0)<\infty\). Fix
\(y_*\in\S_*\) and define
\(\Psi_f(y):=f(y)-f_*\) and \(\Psi_*(y):=\|y-y_*\|^2\).
Suppose there exist constants \(C_{\mathrm{obj}},C_{\mathrm{quad}}\ge0\)
and \(\lambda>0\) such that 
\begin{equation}\label{eq:closed-lyapunov}
\begin{aligned}
\ell_{\Psi_f}(x) &\le C_{\mathrm{obj}}\,\Upsilon(x) &&\forall x\in\X,\\
\ell_{\Psi_*}(x) &\le C_{\mathrm{quad}}\,\Upsilon(x) &&\forall x\in\X,\\
G[\bar\mu](\Upsilon) &\le -\lambda\,\mathcal{V}_\Upsilon(\bar\mu) &&\forall\bar\mu\in\P_q(\X).
\end{aligned}
\end{equation}
Then the state-space Lyapunov functional decays exponentially,
\begin{equation}\label{eq:V-decay}
\mathcal{V}_\Upsilon(\bar\mu_t)\le e^{-\lambda t}\,\mathcal{V}_\Upsilon(\bar\mu_0),
\end{equation}
and, under Assumptions~\ref{assumption:landscape}--\ref{assumption:basin}, the three search-space convergence modes follow:
\begin{enumerate}[label=(\alph*),nosep,leftmargin=*]
\item $\varepsilon$-concentration (MF1).\; For every $\varepsilon>0$, with $L_0:=L_f(1+2\|y_*\|+2R_0)^s$ and $r_\varepsilon:=\min\{R_0,\varepsilon/L_0\}$,
\[
1-p_\varepsilon(t)\le C_{\mathrm{quad}}\,r_\varepsilon^{-2}\,e^{-\lambda t}\,\mathcal{V}_\Upsilon(\bar\mu_0),
\]
where $p_\varepsilon(t):=\nu_t(\S_*^\varepsilon)$ is the one-sample $\varepsilon$-success probability;\label{item:mf1}
\item Objective-gap decay (MF2).\; $\int f\,d\nu_t - f_*\le C_{\mathrm{obj}}\,e^{-\lambda t}\,\mathcal{V}_\Upsilon(\bar\mu_0)$;\label{item:mf2}
\item Consensus (MF3).\; $W_2^2(\nu_t,\delta_{y_*})\le C_{\mathrm{quad}}\,e^{-\lambda t}\,\mathcal{V}_\Upsilon(\bar\mu_0)$ when $\S_*=\{y_*\}$.\label{item:mf3}
\end{enumerate}
\end{theorem}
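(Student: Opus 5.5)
The argument has two parts. First, a Grönwall bound on the scalar curve $v(t):=\mathcal V_\Upsilon(\bar\mu_t)$. Second, a transfer of that bound to the search space through the lifted gauges.

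\emph{Step 1: scalar absolute continuity.} Since $\Upsilon$ is unbounded, it is not in $\mathcal D=C_b^3(\X)$, so \eqref{eq:general-evolution} cannot be applied to it directly. I will truncate. Take $\Upsilon_R:=\chi_R\Upsilon$, where $\chi_R$ is a smooth cutoff equal to $1$ on $B_R(0)$, supported in $B_{2R}(0)$, with $\|D^k\chi_R\|\lesssim R^{-k}$. Then $\Upsilon_R\in\mathcal D$. The growth bounds $\|D^k\Upsilon\|\le C_k(1+\|x\|^{(q-k)_+})$ give
\[
|\Upsilon_R|+\|x\|\,|\nabla\Upsilon_R|+\|x\|^2\,\|D^2\Upsilon_R\|\lesssim 1+\|x\|^q
\]
uniformly in $R$. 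By the polynomial growth of $b_M,a_M$, the boundedness of $\Phi$, and the moment control on $R^{\rm bal}$, the integrands of $G[\bar\mu_t](\Upsilon_R)$ are then dominated by an integrable function. This uses $q\ge3$ and the assumed uniform $q$-th moment bound along the curve.

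Integrate the $\mathcal D$-weak identity from $s$ to $t$ and let $R\to\infty$ by dominated convergence. This gives
\[
v(t)-v(s)=\int_s^t G[\bar\mu_r](\Upsilon)\,dr,
\]
where $G[\bar\mu](\Upsilon)$ is defined by the same explicit formula as in Definition~\ref{def:D-weak-balanced-TRJ}. Hence $v$ is locally absolutely continuous (this is the scalar absolute-continuity lemma of Section~\ref{app:D-weak-TRJ}) and $v'(t)=G[\bar\mu_t](\Upsilon)$ for a.e.\ $t$. I expect this step to be the main obstacle. The hard part is that the jump term $\langle\Upsilon_R,R^{\rm bal}[\bar\mu]\rangle$ needs a uniform $q$-moment bound on the offspring law, which must be taken from the recombination assumptions.

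\emph{Step 2: Grönwall.} The third inequality in \eqref{eq:closed-lyapunov} gives $v'\le-\lambda v$ a.e. Therefore $\frac{d}{dt}(e^{\lambda t}v(t))\le0$ a.e. Because $e^{\lambda t}v$ is absolutely continuous, it is nonincreasing, and \eqref{eq:V-decay} follows.

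\emph{Step 3: search-space transfer.} By Fubini/Tonelli for the nonnegative gauges,
\[
\int\Psi\,d\nu_t=\int\ell_\Psi\,d\bar\mu_t=\mathcal E_\Psi(\bar\mu_t).
\]
Combined with the pointwise bounds in \eqref{eq:closed-lyapunov}, this yields $\mathcal E_{\Psi_f}\le C_{\rm obj}v$ and $\mathcal E_{\Psi_*}\le C_{\rm quad}v$. The first bound is (b). For (c), $W_2^2(\nu_t,\delta_{y_*})=\int\|y-y_*\|^2\,d\nu_t$, because the only coupling with a Dirac mass is the product coupling.

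For (a), I claim that $B_{r_\varepsilon}(y_*)\cap\S\subseteq\S_*^\varepsilon$. Indeed, for $\|y-y_*\|<r_\varepsilon\le R_0$ we have $\|y\|\le\|y_*\|+R_0$. Assumption~\ref{assumption:landscape}(i) with $y'=y_*$ then gives
\[
f(y)-f_*\le L_f\bigl(1+\|y\|+\|y_*\|\bigr)^s\|y-y_*\|\le L_0\,r_\varepsilon\le\varepsilon,
\]
and the middle inequality holds because $L_0$ is built with the larger factor $2\|y_*\|+2R_0$. Consequently
\[
1-p_\varepsilon(t)\le\nu_t\bigl(\|y-y_*\|\ge r_\varepsilon\bigr)\le r_\varepsilon^{-2}\int\Psi_*\,d\nu_t
\]
by Markov/Chebyshev, and the right-hand side is at most $C_{\rm quad}r_\varepsilon^{-2}e^{-\lambda t}\mathcal V_\Upsilon(\bar\mu_0)$. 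The basin assumptions are used only to guarantee that $L_0$ and $R_0$ are well defined.
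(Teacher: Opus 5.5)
Your proposal is correct and follows essentially the same route as the paper. You truncate with a cutoff $\vartheta_{\bar r}\Upsilon$, dominate the mutation, selection and recombination terms uniformly in $\bar r$ (the last via the offspring $q$-moment growth bound), and pass to the limit by dominated convergence before applying Gr\"onwall; you then transfer through the lifted gauges by Tonelli, use the Dirac-coupling identity for $W_2$, and combine the basin inclusion $B_{r_\varepsilon}(y_*)\subset\S_*^\varepsilon$ with Markov's inequality. One small labeling point: the truncation step you describe is the content of the paper's Lemma~\ref{lem:lyapunov-testing}, not of Lemma~\ref{lem:TRJ-scalar-AC}, which only covers bounded test functions $\varphi\in\mathcal D$.
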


The first two conditions in~\eqref{eq:closed-lyapunov} are \emph{search-space compatibility} for the objective-gap and quadratic gauges, respectively, while the third is the \emph{closed dissipation inequality} driving $\mathcal{V}_\Upsilon$ downward at rate~$\lambda$. Since $\Upsilon$ may be unbounded, the proof of~\eqref{eq:V-decay} requires a truncation argument that approximates it by smooth cutoffs and passes to the limit via dominated convergence (Section~\ref{app:lyapunov-testing}); the three convergence modes then follow by transferring the decay of $\mathcal{V}_\Upsilon$ to $\nu_t=\Pi[\bar\mu_t]$ via the compatibility inequalities, together with Markov's inequality and the basin geometry (Section~\ref{app:lyapunov-testing}).

\subsubsection*{Interpretation of the hypotheses.}
The two compatibility inequalities are statements about the sampling kernel alone. They require that the lifted search errors are dominated, pointwise on the state space, by the internal Lyapunov function, and they calibrate the constants $C_{\mathrm{obj}}$ and $C_{\mathrm{quad}}$ appearing in the final bounds; they fail precisely when progress of the internal state need not translate into better candidates, which is the first thing to check when a new parametrization is introduced. The dissipation inequality is, by contrast, a statement about the dynamics. Along any solution of the TRJ equation, $\frac{d}{dt}\mathcal V_\Upsilon(\bar\mu_t)=G[\bar\mu_t](\Upsilon)$ for a.e.\ $t$ (Lemma~\ref{lem:lyapunov-testing}), so the inequality requires the instantaneous decrease of the Lyapunov functional to dominate a multiple of the functional itself; this is a population-level analogue of the Polyak--{\L}ojasiewicz (gradient-domination) condition of smooth optimization, and the exponential decay then follows from Gr\"onwall's lemma. For pure diffusions the same mechanism is the entropy--entropy-production argument behind the exponential equilibration of Fokker--Planck equations~\citep{jordanVariationalFormulationFokkerPlanck1998,ambrosioGradientFlowsMetric2005}; for CBO-type transport--diffusion dynamics, inequalities of this form underlie the variance-decay estimates of~\citep{carrilloAnalyticalFrameworkConsensusbased2018,fornasierConsensusBasedOptimizationMethods2024}. Theorem~\ref{thm:mean-field-convergence} isolates the mechanism from any particular algorithm: it reduces the convergence problem to the verification of one functional inequality for the full generator $G=G_M+G_S^{\rm bal}+G_R^{\rm bal}$. Although the dissipation condition is stated for all $\bar\mu\in\P_q(\X)$, the proof invokes it only along the solution curve, so it suffices to verify it on any class of laws known to contain the trajectory.

It is also worth recording which assumption enters where. The exponential decay~\eqref{eq:V-decay} uses only the closed inequality together with the growth and moment conditions on $\Upsilon$ and $(\bar\mu_t)_{t\ge0}$; no landscape assumptions are involved at this stage. Assumptions~\ref{assumption:landscape}--\ref{assumption:basin} enter only when the state-space decay is converted into the three search-space modes: the objective-gap mode (MF2) uses only the finiteness of $f_*$, the conversion radius $r_\varepsilon=\min\{R_0,\varepsilon/L_0\}$ in (MF1) is built from the Lipschitz envelope and the basin radius, and uniqueness of the minimizer is needed only for consensus (MF3).

\subsubsection*{Quantitative reading.}
All three modes decay at the common rate $\lambda$ and differ only in the gauge and in the multiplicative constant. The $\varepsilon$-concentration bound has a direct operational meaning: $p_\varepsilon(t)$ is the probability that a single draw from the current search law is $\varepsilon$-optimal, so solving $1-p_\varepsilon(t)\le\delta$ shows that
\[
t\;\ge\;\frac1\lambda\,\log\!\Bigl(\frac{C_{\mathrm{quad}}\,\mathcal V_\Upsilon(\bar\mu_0)}{r_\varepsilon^{2}\,\delta}\Bigr)
\]
suffices for one sample from $\nu_t$ to be $\varepsilon$-optimal with probability at least $1-\delta$. The accuracy $\varepsilon$ and the confidence $\delta$ enter only logarithmically (the former through the conversion radius $r_\varepsilon$), the initialization enters through $\mathcal V_\Upsilon(\bar\mu_0)$, and the dissipation rate $\lambda$ sets the time scale of the search. This is also the prediction tested numerically in Section~\ref{subsec:lyap-numerics}: the quantities plotted in Figure~\ref{fig:numerical-verification} are instances of $\mathcal V_\Upsilon$ for CBO, CMA-ES, and a recombinative ES, and the slopes of the semi-log profiles are empirical estimates of the corresponding rates~$\lambda$.

\subsection{Modular verification of the dissipation condition}\label{subsec:lyap-modular}

A crucial feature is that the closed dissipation condition in~\eqref{eq:closed-lyapunov} can be verified operator by operator: if each canonical operator satisfies $G_j[\bar\mu](\Upsilon)\le -\lambda_j\mathcal{V}_\Upsilon + c_j\mathfrak{b}_j$ with bias functionals $\mathfrak{b}_j\le\kappa_j\mathcal{V}_\Upsilon$, then $G[\bar\mu](\Upsilon)\le -\lambda\mathcal{V}_\Upsilon(\bar\mu)$ with $\lambda=\sum_j\lambda_j - \sum_j c_j\kappa_j>0$ (Proposition~\ref{prop:operatorwise}, proved in Section~\ref{app:lyap-closure}). This modularity is what turns Theorem~\ref{thm:mean-field-convergence} into a practical verification toolkit: dissipation evidence assembled separately for mutation, selection, and recombination can be combined to certify convergence of the composite algorithm.

\begin{proposition}[Operator-wise closure]\label{prop:operatorwise}
Suppose that the generator of each canonical operator can be bounded separately:
$G_M[\bar\mu](\Upsilon)\le -\lambda_M\mathcal{V}_\Upsilon + c_M\mathfrak{b}_\Upsilon$,
$G_S[\bar\mu](\Upsilon)\le -\lambda_S\mathcal{V}_\Upsilon$,
$G_R[\bar\mu](\Upsilon)\le -\lambda_R\mathcal{V}_\Upsilon$,
where $\mathfrak{b}_\Upsilon\ge0$ is a bias functional satisfying $\mathfrak{b}_\Upsilon\le \kappa_\Upsilon\mathcal{V}_\Upsilon$.
If $\lambda:=\lambda_M+\lambda_S+\lambda_R - c_M\kappa_\Upsilon>0$, then the dissipation bound $G[\bar\mu](\Upsilon)\le -\lambda\mathcal{V}_\Upsilon(\bar\mu)$ in~\eqref{eq:closed-lyapunov} holds with rate~$\lambda$.
\end{proposition}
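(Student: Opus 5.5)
The argument rests on the additivity of the pre-generator established in Theorem~\ref{thm:TRJ-decomposition}(i). For every $\bar\mu\in\P_q(\X)$ we have $G[\bar\mu](\Upsilon)=G_M[\bar\mu](\Upsilon)+G_S[\bar\mu](\Upsilon)+G_R[\bar\mu](\Upsilon)$. Here $G_S$ and $G_R$ denote the balanced selection and recombination generators entering $G$, and these are the ones the hypotheses refer to. The plan is to sum the three operator-wise bounds pointwise in $\bar\mu$ and then absorb the bias term.

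\textbf{Step 1: making the generators meaningful on $\Upsilon$.} Additivity is stated in $\mathcal D'$ with $\mathcal D=C_b^3(\X)$, but $\Upsilon$ is unbounded. I would use the explicit integral formulas from Definition~\ref{def:D-weak-balanced-TRJ}:
\begin{itemize}
\item the mutation term $\int(\nabla\Upsilon\cdot b_M+\tfrac12\Tr(a_M D^2\Upsilon))\,d\bar\mu$,
\item the reaction term $-\int(\Phi-\bar\Phi)\Upsilon\,d\bar\mu$,
\item the jump term $\langle\Upsilon,R^{\rm bal}[\bar\mu]\rangle-\langle\Upsilon,\bar\mu\rangle$.
\end{itemize}
Each integral is finite. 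This follows from the growth bounds $\|D^k\Upsilon\|\le C_k(1+\|x\|^{(q-k)_+})$, the polynomial growth of $b_M,a_M$, boundedness of $\Phi$, and the moment control of recombination, all for $\bar\mu\in\P_q(\X)$. With the generators defined by these formulas, additivity on $\Upsilon$ holds by linearity of the integrals. Equivalently, it follows by the truncation argument used for Lemma~\ref{lem:lyapunov-testing}: apply the identity to smooth cutoffs $\Upsilon_R\in\mathcal D$ and pass $R\to\infty$ by dominated convergence. This is the only point needing care, and since the hypotheses already presuppose that each $G_j[\bar\mu](\Upsilon)$ is a well-defined real number, I would simply invoke this extension.

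\textbf{Step 2: summing and absorbing.} Adding the three hypothesized inequalities gives
\[
G[\bar\mu](\Upsilon)\le-(\lambda_M+\lambda_S+\lambda_R)\mathcal V_\Upsilon(\bar\mu)+c_M\mathfrak b_\Upsilon(\bar\mu).
\]
I would take $c_M\ge0$, which is implicit in the bias formulation. Then the bound $0\le\mathfrak b_\Upsilon\le\kappa_\Upsilon\mathcal V_\Upsilon$ gives $c_M\mathfrak b_\Upsilon\le c_M\kappa_\Upsilon\mathcal V_\Upsilon$. Substituting yields $G[\bar\mu](\Upsilon)\le-\lambda\mathcal V_\Upsilon(\bar\mu)$ with $\lambda=\lambda_M+\lambda_S+\lambda_R-c_M\kappa_\Upsilon$.

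\textbf{Step 3: the conclusion.} Since $\lambda>0$ by assumption, this is exactly the third line of~\eqref{eq:closed-lyapunov}, holding for every $\bar\mu$ at which the operator-wise bounds hold. It therefore holds on all of $\P_q(\X)$, or, as remarked after Theorem~\ref{thm:mean-field-convergence}, on any class of laws containing the trajectory, which is all that theorem needs.

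The individual $\lambda_j$ may be negative, for instance when mutation noise increases $\mathcal V_\Upsilon$. Only their corrected sum must be positive, and the argument never uses the sign of any single $\lambda_j$.
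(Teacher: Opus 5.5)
Your proposal is correct and follows the paper's proof: you invoke additivity $G[\bar\mu]=G_M[\bar\mu]+G_S[\bar\mu]+G_R[\bar\mu]$, add the three operator-wise bounds, and absorb the bias term via $\mathfrak{b}_\Upsilon\le\kappa_\Upsilon\mathcal{V}_\Upsilon$. Your remarks on extending additivity to the unbounded $\Upsilon$ and on needing $c_M\ge 0$ make explicit two points that the paper's proof leaves implicit.
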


\subsubsection*{Reading the dissipation budget.}
Proposition~\ref{prop:operatorwise} turns the additive generator structure into an accounting identity: each operator contributes its own rate, and the rates add. The individual bounds also have transparent meanings for the canonical operators of Section~\ref{subsec:TRJ}, which we illustrate for the quadratic choice $\Upsilon(x)=\|x-y_*\|^2$ in the nonparametric setting $\X=\S$. For mutation,
\[
G_M[\bar\mu](\Upsilon)=\int_\X\bigl(2\langle x-y_*,\,b_M(x;\bar\mu)\rangle+\Tr a_M(x;\bar\mu)\bigr)\,\bar\mu(dx),
\]
so a drift aligned with $y_*-x$ contributes contraction, while the diffusion contributes the nonnegative trace term: this is the expansive part collected by the bias functional $\mathfrak b_\Upsilon$. For balanced selection, $G_S^{\rm bal}[\bar\mu](\Upsilon)=-\int(\Phi-\bar\Phi)\Upsilon\,d\bar\mu$ is the negative covariance, under $\bar\mu$, of the selection pressure and the Lyapunov value: selection dissipates exactly when states far from the minimizer carry above-average pressure, that is, when the fitness signal ``sees'' the Lyapunov function. For recombination with uniform independent pairing and midpoint crossover (the rule illustrated in Figure~\ref{fig:canonical-ops}), a one-line computation gives $G_R^{\rm bal}[\bar\mu](\Upsilon)=-\tfrac12\Var(\bar\mu)$, where $\Var(\bar\mu):=\int\|x-m_{\bar\mu}\|^2\,\bar\mu(dx)$ and $m_{\bar\mu}:=\int x\,\bar\mu(dx)$: averaging-type mixing dissipates half of the population variance per unit jump rate, but leaves the offset $\|m_{\bar\mu}-y_*\|^2$ untouched, which must therefore be removed by mutation or selection.

The closure condition $\lambda=\lambda_M+\lambda_S+\lambda_R-c_M\kappa_\Upsilon>0$ is an exploration--exploitation trade-off in quantitative form: the certificate survives as long as the combined contraction outweighs the amplified exploration bias $c_M\kappa_\Upsilon$. The hypothesis $\mathfrak b_\Upsilon\le\kappa_\Upsilon\mathcal V_\Upsilon$ asks the injected noise to be self-limiting. In CBO, for instance, the noise amplitude is proportional to the distance to the consensus point, so the trace term above is dominated by a multiple of $\mathcal V_\Upsilon$ once the consensus point is quantitatively controlled --- the content of the Laplace-principle estimates in the CBO convergence literature~\citep{pinnauConsensusbasedModelGlobal2017,fornasierConsensusBasedOptimizationMethods2024}. Verifying the three operator-wise bounds with explicit constants for concrete algorithm families is the algorithm-dependent step discussed in Section~\ref{sec:discussion}; the framework reduces it to three decoupled calculations.

\subsection{Numerical verification}\label{subsec:lyap-numerics}

To make the abstract Lyapunov decay~\eqref{eq:V-decay} concrete,
Figure~\ref{fig:numerical-verification} reports three illustrative TRJ
instances: CBO, rank-\(\mu\) CMA-ES, and a recombinative ES with
Boltzmann selection, arithmetic crossover, and derivative-free local
mutation. In each case the plotted Lyapunov functional decays approximately
geometrically, consistent with Theorem~\ref{thm:mean-field-convergence}.
The experiments are intended as sanity checks rather than benchmark
comparisons. Full experimental protocols and hyperparameters are given in
Appendix~\ref{app:numerical-verification}, and the source code is provided
as supplementary material.

\begin{figure}[htbp]
    \centering
    \begin{subfigure}[b]{0.32\textwidth}
        \centering
        \includegraphics[width=\textwidth]{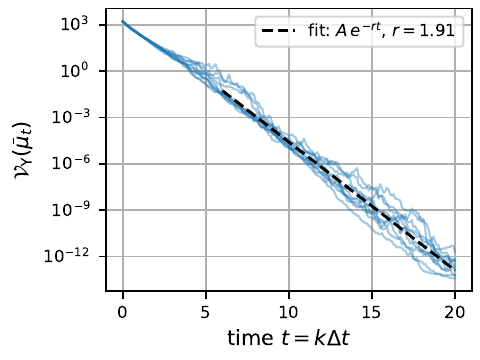}
        \caption{CBO}
    \end{subfigure}
    \hfill
    \begin{subfigure}[b]{0.32\textwidth}
        \centering
        \includegraphics[width=\textwidth]{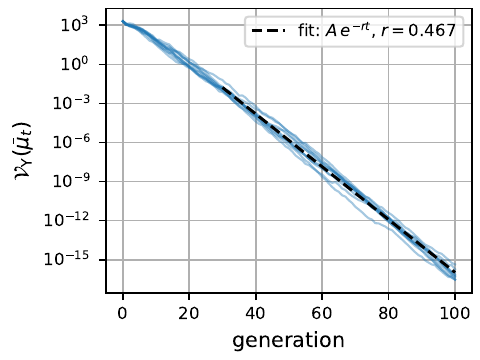}
        \caption{CMA-ES}
    \end{subfigure}
    \hfill
    \begin{subfigure}[b]{0.32\textwidth}
        \centering
        \includegraphics[width=\textwidth]{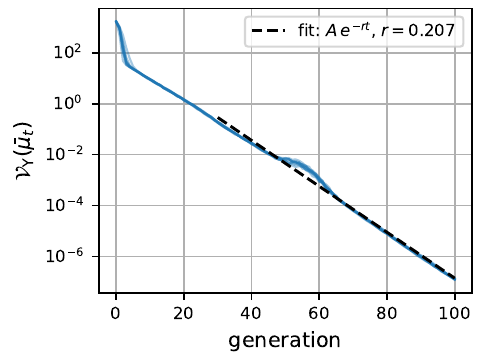}
        \caption{Recombinative ES}
    \end{subfigure}
    \caption{Development of the Lyapunov functional over 10 independent random
seeds for (a) consensus-based optimization, (b) CMA-ES, and (c) recombinative
ES with learned local derivative-free refinement. Each thin curve corresponds
to one seed and captures both random initialization and algorithmic randomness;
no confidence intervals are reported because the experiments are sanity checks
rather than benchmark comparisons. Each panel is plotted on a semi-log scale.
The empirical decay is approximately exponential, consistent with
Theorem~\ref{thm:mean-field-convergence}. Exact protocols and hyperparameters
are given in Table~\ref{tab:numerical-settings}
(Appendix~\ref{app:numerical-verification}).}
    \label{fig:numerical-verification}
\end{figure}

\section{Canonical Operators: Definitions, Regularity, and Examples}\label{sec:canonical-detail}

This section collects the formal definitions, regularity assumptions, and representative examples for the canonical mutation, selection, and recombination operators introduced in Section~\ref{subsec:TRJ}. The verification that each operator satisfies conditions~(A1)--(A4) of Assumption~\ref{assumption:composite} is carried out in Appendix~\ref{app:operators}.

Each subsection follows the same template: we define the operator as a map on $\M_{q,+}(\X)$, isolate a regularity class --- diffusion-admissible mutation kernels (Assumption~\ref{ass:mutation-diffusion}), bounded-pressure selection rates (Assumption~\ref{ass:bounded-pressure-app}), and Lipschitz recombination data (Assumption~\ref{ass:recombination-section-ass}) --- discuss the induced generator, and illustrate the class with standard algorithmic mechanisms. The assumptions are calibrated to a single purpose: they are what is needed to verify, operator by operator, the four conditions (A1)--(A4) of Assumption~\ref{assumption:composite}. This verification is carried out in Propositions~\ref{prop:app-mutation-gen}, \ref{prop:app-selection-gen}, and~\ref{prop:app-recomb-gen}, and it is what licenses the TRJ decomposition (Theorem~\ref{thm:TRJ-decomposition}). The growth and boundedness parts of the assumptions are used a second time in the convergence analysis, where they justify integrating the TRJ equation against polynomially growing Lyapunov functions (Lemma~\ref{lem:TRJ-scalar-AC}, and Lemma~\ref{lem:lyapunov-testing} behind Theorem~\ref{thm:mean-field-convergence}). After each assumption we record which condition is responsible for which estimate.

\subsection{Mutation operator}\label{subsec:mutation-main}

Mutation is the only canonical operator that transports mass continuously through the state space, and it is modelled accordingly as one step of a Markov transition whose kernel may depend on the current population. The diffusion-admissible form below should be read as an Euler--Maruyama step of a McKean--Vlasov SDE: over a step of length $\tau$ the drift acts at order $\tau$ and the noise at order $\sqrt\tau$. This parabolic scaling is what makes drift and noise contribute at the same order to the pre-generator, producing the second-order transport--diffusion term of the TRJ equation~\eqref{eq:TRJ}; any other scaling would let one of the two effects vanish or dominate in the limit. The population dependence of the coefficients is essential for the intended applications: it encodes, for example, the consensus pull of CBO and the adapted covariance of CMA-ES-type flows.

\begin{definition}[Mutation kernel and operator]\label{def:mutation-operator}
A \emph{mutation kernel family} is a collection $\{p_\tau(x,\cdot;\mu)\}_{\tau>0}$ of Markov kernels on $\X$, parametrised by the current population $\mu\in\M_{q,+}(\X)$. The associated \emph{mutation operator} $M_\tau\colon\M_{q,+}(\X)\to\M_{q,+}(\X)$ is the mass-preserving Markov transition
\begin{equation}\label{eq:app-mutation-operator}
M_\tau[\mu](A)\;:=\;\int_\X p_\tau(x,A;\mu)\,\mu(dx),\qquad A\in\mathcal{B}(\X),
\end{equation}
or equivalently, in integrated form, $\langle\varphi,M_\tau[\mu]\rangle=\int_\X\!\!\int_\X\varphi(y)\,p_\tau(x,dy;\mu)\,\mu(dx)$ for every bounded measurable $\varphi$.
\end{definition}

\begin{assumption}[Diffusion-admissible mutation kernel]\label{ass:mutation-diffusion}
Let $\psi$ be a centered noise increment on a probability space $(\Xi,\mathcal F,\Prob)$ with covariance $\E[\psi\psi^\top]=\Sigma\in\R^{d'\times d'}$ symmetric and positive semidefinite, and $\E\|\psi\|^{3}<\infty$. The mutation kernel family $\{p_\tau(x,\cdot;\mu)\}_{\tau>0}$ is \emph{diffusion-admissible on $\M_{3,+}^r$} if there exist coefficient maps $b_M\colon\X\times\P_3(\X)\to\R^{d_x}$ (drift) and $B_M\colon\X\times\P_3(\X)\to\R^{d_x\times d'}$ (diffusion coefficient) such that $p_\tau(x,\cdot;\mu)$ is the law of
\begin{equation}\label{eq:app-mutation-update}
m_\tau(x;\mu,\xi) := x + \tau\, b_M(x;\bar\mu) + \sqrt{\tau}\, B_M(x;\bar\mu)\,\psi(\xi),
\end{equation}
with the coefficients satisfying, on each $\M_{3,+}^r$ with constants depending on $r$:
\begin{enumerate}[label=\textup{(\roman*)},leftmargin=*,itemsep=2pt]
\item Linear growth. $\|b_M(x;\bar\mu)\|+\|B_M(x;\bar\mu)\|_F\le C(r)\,(1+\|x\|)$.
\item Joint Lipschitz continuity. There exists $L(r)<\infty$ such that
\[
\|b_M(x;\bar\mu)-b_M(y;\bar\nu)\|+\|B_M(x;\bar\mu)-B_M(y;\bar\nu)\|_F \le L(r)\bigl(\|x-y\|+W_2(\bar\mu,\bar\nu)\bigr)
\]
for all $x,y\in\X$ and $\bar\mu,\bar\nu\in\P_3(\X)$.
\end{enumerate}
\end{assumption}

\subsubsection*{Role of the assumptions.}
The two conditions play distinct roles in the verification of (A1)--(A4). The linear-growth bound (i), combined with the moment conditions on the noise, is the driving estimate behind the pre-generator limit, the moment stability, and the near-identity bounds, that is, (A1)--(A3) of Proposition~\ref{prop:app-mutation-gen}; the third absolute moment $\E\|\psi\|^3<\infty$ controls the cubic Taylor remainder in the pre-generator limit, and the choice of the test class $\mathcal D=C_b^3(\X)$ is dictated by precisely this third-order expansion. The joint Lipschitz condition (ii) is needed only for the local Lipschitz continuity of the pre-generator (A4), the property that allows mutation to be composed with the other operators in Theorem~\ref{thm:operator-composition}; the pre-generator identity itself holds without it (see the remark preceding the proof of Proposition~\ref{prop:app-mutation-gen}). Outside the operator calculus, the growth bound (i) is used once more to dominate the mutation term when the TRJ equation is tested against unbounded Lyapunov functions in the proof of Theorem~\ref{thm:mean-field-convergence} (Lemma~\ref{lem:lyapunov-testing}).

\subsubsection*{Transport--diffusion (mutation).}
\begin{sloppypar}
The mutation contribution $\int\nabla\varphi\cdot b_M\,d\mu_t+{}$ $\tfrac12\int\Tr(a_M D^2\varphi)\,d\mu_t$ is a \emph{Fokker--Planck operator} acting on the population measure. The first-order term $\nabla\varphi\cdot b_M$ encodes deterministic drift: in CBO, this drift steers each particle toward the weighted consensus point $m_\beta(\bar\mu_t)$; in CMA-ES, it encodes the mean-update rule; in SGD, it is the negative gradient $-\nabla f$. The second-order term $\frac12\Tr(a_M D^2\varphi)$ encodes stochastic exploration: in CBO, this is the isotropic noise scaled by the distance to consensus; in CMA-ES, it is the adapted covariance; in SGD with gradient noise, it is the mini-batch variance. When $a_M=0$, the mutation contribution reduces to a pure transport equation (first-order PDE), as in deterministic optimization.
\end{sloppypar}

The population dependence of $b_M$ and $a_M$ on $\bar\mu_t$ is what makes the equation a \emph{nonlinear} PDE (of McKean--Vlasov type): the drift and diffusion felt by each individual depend on where the rest of the population sits. This is the mathematical manifestation of the collective intelligence that distinguishes population-based methods from independent random search.

\subsubsection*{Naming convention for the next examples.}
The labels in the next three examples are analytical descriptors of the induced generator, not standard evolutionary-computation terminology. In particular, the first example is the usual Gaussian mutation of evolution strategies~\citep{rechenberg1973evolutionsstrategie,beyerEvolutionStrategiesComprehensive2002,eibenIntroductionEvolutionaryComputing2015}; the word ``Boltzmann'' only refers to the heat-semigroup/Boltzmann-entropy interpretation familiar from Wasserstein gradient flows~\citep{jordanVariationalFormulationFokkerPlanck1998,ambrosioGradientFlowsMetric2005}. Likewise, ``KL mutation'' below means a Langevin mutation whose population-level Fokker--Planck equation dissipates $\mathrm{KL}(\cdot\,\|\,\gamma)$; it is not meant as a separate standard EA operator.

\begin{example}[Gaussian heat (Boltzmann) mutation]
Assume for simplicity that $\X=\R^d$. A mutation adding centered Gaussian noise with variance scale $2\varepsilon\tau$ is obtained from
\[
  m_\tau^{\rm heat}(x;\xi)=x+\sqrt{2\varepsilon\tau}\,\psi(\xi),
  \qquad \E\psi=0,
  \qquad \E[\psi\psi^\top]=\Sigma.
\]
In the diffusion-admissible form~\eqref{eq:app-mutation-update}, this corresponds to
\[
  b_M^{\rm heat}(x;\bar\mu)=0,
  \qquad
  B_M^{\rm heat}(x;\bar\mu)=\sqrt{2\varepsilon}\,I_d .
\]
The coefficients are constant, hence Assumption~\ref{ass:mutation-diffusion} holds with constants independent of $r$, for instance $C(r)=\sqrt{2\varepsilon}\,\|I_d\|_F$ and $L(r)=0$. The operator is mass-preserving and its pre-generator is
\[
G_M^{\rm heat}[\mu](\varphi)
  =\varepsilon\int_\X \Tr\!\bigl(\Sigma D^2\varphi(x)\bigr)\,\mu(dx).
\]
When $\Sigma=I_d$, this reduces to $G_M^{\rm heat}[\mu](\varphi)=\varepsilon\int\Delta\varphi\,d\mu$, the weak generator of the heat equation $\partial_t\mu_t=\varepsilon\Delta\mu_t$. In density form, this is the $W_2$-gradient flow of the Boltzmann entropy $\rho\mapsto\int\rho\log\rho$, which motivates the parenthetical terminology. In evolutionary-computation language the same step should simply be called Gaussian mutation.
\end{example}

\begin{example}[Langevin mutation]
Let $\gamma(dx)=Z^{-1}e^{-U(x)}\,dx$ be a reference probability measure with $U\in C^1(\R^d)$ and assume that $\nabla U$ is globally Lipschitz, hence of at most linear growth. For a fixed positive semidefinite preconditioner $\Sigma$, define
\[
  m_\tau^{\rm KL}(x;\xi)
  :=x-\varepsilon\tau\,\Sigma\nabla U(x)+\sqrt{2\varepsilon\tau}\,\psi(\xi),
  \qquad \E\psi=0,
  \qquad \E[\psi\psi^\top]=\Sigma .
\]
Equivalently, in~\eqref{eq:app-mutation-update},
\[
  b_M^{\rm KL}(x;\bar\mu)=-\varepsilon\,\Sigma\nabla U(x),
  \qquad
  B_M^{\rm KL}(x;\bar\mu)=\sqrt{2\varepsilon}\,I_d .
\]
The linear-growth and Lipschitz conditions in Assumption~\ref{ass:mutation-diffusion} follow from the corresponding assumptions on $\nabla U$. The induced operator is mass-preserving and has pre-generator
\[
G_M^{\rm KL}[\mu](\varphi)
  =\varepsilon\int_\X\Tr\!\bigl(\Sigma D^2\varphi(x)\bigr)\,\mu(dx)
  -\varepsilon\int_\X\nabla\varphi(x)\cdot\Sigma\nabla U(x)\,\mu(dx).
\]
For $\Sigma=I_d$ the forward equation is
\[
  \partial_t\rho_t=\varepsilon\Delta\rho_t+\varepsilon\nabla\!\cdot(\rho_t\nabla U)
  =\varepsilon\nabla\!\cdot\bigl(\rho_t\nabla\log(\rho_t/\gamma)\bigr),
\]
which is the Wasserstein gradient flow of $\mathrm{KL}(\rho\,\|\,\gamma)$~\citep{jordanVariationalFormulationFokkerPlanck1998,ambrosioGradientFlowsMetric2005}. For general constant $\Sigma$, the same formula gives the anisotropic or preconditioned Langevin equation, i.e. the corresponding gradient flow in the preconditioned transport geometry. Replacing $\nabla U$ by an unbiased mini-batch estimator gives the stochastic-gradient Langevin dynamics step used in Bayesian learning~\citep{wellingBayesianLearningStochastic2011}.
\end{example}

\begin{example}[Consensus-based optimization (CBO) mutation]
Let $f\colon\X\to\R$ be the objective, fix $\alpha,\lambda,\sigma>0$, and let $h\colon\R\to[0,1]$ be a bounded modulation function, often $h\equiv1$ or a smooth approximation of the Heaviside function. For $\mu\in\P_2(\X)$, define the Boltzmann-weighted consensus point
\[
v_\alpha(\mu):=\frac{\int_\X x\,e^{-\alpha f(x)}\,\mu(dx)}{\int_\X e^{-\alpha f(x)}\,\mu(dx)}.
\]
The CBO mutation map is
\[
m_\tau^{\rm CBO}(x;\mu,\xi)
  :=x-\tau\lambda\bigl(x-v_\alpha(\mu)\bigr)h\!\bigl(f(x)-f(v_\alpha(\mu))\bigr)
  +\sigma\sqrt\tau\,\|x-v_\alpha(\mu)\|\,\psi(\xi).
\]
For identity-covariance noise, this corresponds to
\[
  b_M^{\rm CBO}(x;\bar\mu)
  =-\lambda h\!\bigl(f(x)-f(v_\alpha(\bar\mu))\bigr)\bigl(x-v_\alpha(\bar\mu)\bigr),
  \qquad
  B_M^{\rm CBO}(x;\bar\mu)=\sigma\|x-v_\alpha(\bar\mu)\|I_d .
\]
The resulting mass-preserving pre-generator is
\begin{align*}
G_M^{\rm CBO}[\mu](\varphi)
&=-\lambda\int_\X h\!\bigl(f(x)-f(v_\alpha(\mu))\bigr)\bigl(x-v_\alpha(\mu)\bigr)\cdot\nabla\varphi(x)\,\mu(dx)\\
&\quad+\frac{\sigma^2}{2}\int_\X\|x-v_\alpha(\mu)\|^2\,\Delta\varphi(x)\,\mu(dx).
\end{align*}
For a general noise covariance $\Sigma$, the Laplacian in the last display should be replaced by $\Tr(\Sigma D^2\varphi)$. The pre-generator identity itself only uses boundedness of $h$ and the moment bounds; the full diffusion-admissibility assumptions used in Proposition~\ref{prop:app-mutation-gen} require additional regularity, for instance a Lipschitz $h$, enough regularity of $f$, and uniform control of $v_\alpha$ on moment sublevels. Under these standard CBO regularity conditions the mean-field equation reduces to the nonlinear CBO Fokker--Planck equation~\citep{pinnauConsensusbasedModelGlobal2017,fornasierConsensusBasedOptimizationMethods2024}.
\end{example}

\subsection{Selection operator}\label{subsec:selection-main}

Selection changes the weights of individuals without moving them in space. The exponential form of the multiplier is not an arbitrary choice: for a frozen pressure it composes consistently across time steps, $s_\tau\,s_{\tau'}=s_{\tau+\tau'}$, it keeps all weights strictly positive, and its small-$\tau$ expansion produces the linear reaction term of the TRJ equation. Allowing the pressure $\Phi$ to depend on the population is what brings rank-based, tournament, and truncation-type schemes --- whose pressures are functionals of the empirical fitness distribution --- into the same framework. The balanced variant renormalizes total mass and thereby turns selection into a zero-sum reallocation: individuals compete for a fixed mass budget, which is the measure-level version of the replicator picture.

\begin{definition}[Selection rate, multiplier, and selection operator]\label{def:selection-operator}
Let $\Phi\colon\X\times\M_{q,+}(\X)\to\R$ be a measurable \emph{selection rate}, and define the associated \emph{selection multiplier}
\begin{equation}\label{eq:app-selection-multiplier}
s_\tau(x;\mu)\;:=\;\exp\bigl(-\tau\,\Phi(x;\mu)\bigr),\qquad x\in\X,\ \mu\in\M_{q,+}(\X),\ \tau>0.
\end{equation}
The associated (unbalanced) \emph{selection operator} $S_\tau\colon\M_{q,+}(\X)\to\M_{q,+}(\X)$ acts by pointwise reweighting,
\begin{equation}\label{eq:app-selection-operator}
S_\tau[\mu](dx)\;:=\;s_\tau(x;\mu)\,\mu(dx),
\end{equation}
and the \emph{balanced (mass-preserving) variant} is obtained by renormalization,
\[
S_\tau^{\rm bal}[\mu]\;:=\;\frac{\mu(\X)}{S_\tau[\mu](\X)}\,S_\tau[\mu],\qquad S_\tau[\mu](\X)>0.
\]
\end{definition}

\begin{assumption}[Selection-rate assumptions]\label{ass:bounded-pressure-app}
Fix $q\ge 2$. The selection rate $\Phi\colon\X\times\M_{q,+}(\X)\to\R$ in Definition~\ref{def:selection-operator} is measurable and satisfies:
\begin{enumerate}[label=\textup{(\roman*)},leftmargin=*,itemsep=2pt]
\item Bounded pressure. For every $r<\infty$ there exists $C_\Phi(r)<\infty$ such that
$|\Phi(x;\mu)|\le C_\Phi(r)$ for all $x\in\X$, $\mu\in\M_{q,+}^r$.
\item Selection-rate regularity. $\Phi(\cdot\,;\mu)$ is measurable in~$x$, and for each $r<\infty$ there exists $L_\Phi(r)<\infty$ such that
\[
\sup_{x\in\X}|\Phi(x;\mu)-\Phi(x;\nu)|
\le L_\Phi(r)\,W_2(\bar\mu,\bar\nu)
\quad\forall\,\mu,\nu\in\M_{q,+}^r,
\]
where $\bar\mu:=\mu/\mu(\X)$ and $\bar\nu:=\nu/\nu(\X)$.
\item Spatial regularity of the selection rate. For every $r<\infty$ there exists $K_\Phi(r)<\infty$ such that for all $\nu\in\M_{q,+}^r$ and all $x,y\in\X$,
\[
|\Phi(x;\nu)-\Phi(y;\nu)|\le K_\Phi(r)\,\|x-y\|\,\bigl(1+\|x\|^{q-1}+\|y\|^{q-1}\bigr).
\]
\end{enumerate}
\end{assumption}

\subsubsection*{Role of the assumptions.}
The three conditions are calibrated to Proposition~\ref{prop:app-selection-gen}. Bounded pressure (i) alone yields the pre-generator, the moment stability, and the near-identity estimates (A1)--(A3): it confines the multipliers between the deterministic bounds $e^{\pm\tau C_\Phi(r)}$, so mass and moments can change only at a controlled exponential rate. The two regularity conditions are needed only for the local Lipschitz continuity (A4): condition (ii) controls how the pressure responds to a $W_2$-perturbation of the population, and condition (iii) controls how it varies across the state space; together they make the reaction term $-(\Phi-\bar\Phi)\mu$ Lipschitz in the population metric $d_{q,2}^*$. This division of labour is visible in the examples below: a bounded Lipschitz objective pressure such as $\Phi=\beta f$ satisfies all three conditions, whereas exact rank-based pressures satisfy (i) --- hence (A1)--(A3) --- but generally violate the spatial regularity (iii) for empirical populations, which is why the rank-stability hypothesis~\eqref{eq:rank-stability-hypothesis} and the smoothed rank are introduced below. Boundedness of $\Phi$ is also what the convergence analysis uses to control the reaction term against unbounded Lyapunov functions (Lemma~\ref{lem:lyapunov-testing}).


The sign convention in~\eqref{eq:app-selection-generator} is simple: points with $\Phi>\bar\Phi$ lose relative weight, while points with $\Phi<\bar\Phi$ gain relative weight. This is the only connection to Fisher--Rao reaction geometry that is used here. A variational Fisher--Rao gradient flow has a birth--death source of the form $-\rho(\delta\mathcal F/\delta\rho-\langle\delta\mathcal F/\delta\rho\rangle_\rho)$, whereas the present selection operator merely has the same centered source structure. The function $\Phi$ is specified by the algorithm and need not be a variational derivative~\citep{chizatInterpolatingDistanceOptimal2018,lieroOptimalEntropyTransportProblems2018}.

It is useful to separate two meanings of selection. An \emph{objective-value} rule depends on the numerical values of the objective, whereas a \emph{rank-based} rule depends only on their ordering and is invariant under strictly increasing transformations of the objective. The words ``exponential'', ``Boltzmann'', and ``Gibbs'' below are descriptive labels for the analytic form of the multiplier, not conventions that we import from evolutionary-algorithm terminology. Rank-based selection, tournament selection, and $(\mu,\lambda)$ truncation selection are standard in evolutionary-algorithm analysis; NES and CMA-ES use rank utilities or fitness shaping for closely related invariance reasons~\citep{corusLevelBasedAnalysisGenetic2018,wierstraNaturalEvolutionStrategies2014,hansenCMAEvolutionStrategy2023}. The following examples should therefore be read as continuous-time, measure-level analogues of familiar finite-population rules, not as claims that the finite empirical rank map itself is smooth.

\begin{example}[Exponential objective-value reweighting]\label{ex:app-exponential-selection}
Let $f\colon\X\to\R$ be the objective to be minimized. If $f$ is bounded and Lipschitz on $\X$, define
\[
\Phi(x;\bar\mu):=\beta f(x),\qquad \beta>0.
\]
Equivalently, one may replace $f$ by $f-\int f\,d\bar\mu$, since the balanced generator subtracts $\bar\Phi$ anyway. Then $s_\tau(x;\bar\mu)=\exp(-\tau\beta f(x))$, so low-objective points receive larger weight. The assumptions of Proposition~\ref{prop:app-selection-gen} are immediate: boundedness of $f$ gives boundedness of $\Phi$, $\Phi$ is independent of the population law so Assumption~\ref{ass:bounded-pressure-app}(ii) holds with $L_\Phi=0$, and Lipschitz continuity of $f$ gives the spatial regularity in~(iii). If $\X=\R^d$ and $f$ is unbounded, this example is outside the bounded-log-multiplier framework as stated. A direct admissible variant is obtained by using a bounded clipped or saturated version of $f$, for instance
\[
\Phi_K(x;\bar\mu):=\beta\,\operatorname{clip}(f(x)-c,-K,K),
\]
with fixed $K<\infty$ and a fixed reference level $c$. The Boltzmann or Gibbs wording sometimes used for such formulas refers only to the exponential form of the multiplier. By contrast, standard CBO uses exponential weights inside the consensus point; unless an explicit reweighting step is added, that mechanism belongs to the mutation or drift component in the present splitting rather than to a separate selection operator.
\end{example}

We next record three rank-based examples. For a normalized population $\bar\mu\in\P_q(\X)$, set
\[
F_{\bar\mu}(a):=\bar\mu\{x:f(x)\le a\},\qquad u_{\bar\mu}(x):=F_{\bar\mu}(f(x))\in[0,1].
\]
The map $u_{\bar\mu}$ is the normalized rank of $x$, with small values corresponding to better objective values. Exact ranks are often discontinuous, especially for empirical measures. To obtain the full local-Lipschitz conclusion~(A4), one must either assume the rank-stability estimate
\begin{equation}\label{eq:rank-stability-hypothesis}
\sup_{a\in\R}|F_{\bar\mu}(a)-F_{\bar\nu}(a)|\le C_{\rm rk}(r)W_2(\bar\mu,\bar\nu)
\qquad\text{on the relevant moment ball,}
\end{equation}
or replace $F_{\bar\mu}$ by a smoothed rank function
\[
F_{\bar\mu}^{\eta}(a):=\int_\X \kappa_\eta(a-f(y))\,\bar\mu(dy),
\qquad u_{\bar\mu}^\eta(x):=F_{\bar\mu}^{\eta}(f(x)),
\]
where $\kappa_\eta$ is bounded and Lipschitz. If $f$ is Lipschitz, then
\[
\sup_a|F_{\bar\mu}^\eta(a)-F_{\bar\nu}^\eta(a)|\le \operatorname{Lip}(\kappa_\eta)\operatorname{Lip}(f)W_1(\bar\mu,\bar\nu)\le \operatorname{Lip}(\kappa_\eta)\operatorname{Lip}(f)W_2(\bar\mu,\bar\nu),
\]
so Assumption~\ref{ass:bounded-pressure-app}(ii) follows. Moreover, $x\mapsto u_{\bar\mu}^\eta(x)$ is uniformly Lipschitz with constant at most $\operatorname{Lip}(\kappa_\eta)\operatorname{Lip}(f)$, which gives Assumption~\ref{ass:bounded-pressure-app}(iii) after composition with a Lipschitz rank utility. Without such a condition, the examples below still satisfy the bounded-pressure hypotheses needed for~(A1)--(A3), but not the full local-Lipschitz assumption~(A4).

\begin{example}[Rank-utility selection]\label{ex:app-rank-utility}
Choose a bounded, nonincreasing utility $U\colon[0,1]\to\R$ and let $\bar U:=\int_0^1 U(v)\,dv$. Assume $U\not\equiv\bar U$ and define
\[
\chi(u):=\frac{U(u)-\bar U}{\|U-\bar U\|_\infty}\in[-1,1],\qquad
\Phi(x;\bar\mu):=-\chi(u_{\bar\mu}(x)).
\]
Then $s_\tau(x;\bar\mu)=\exp(\tau\chi(u_{\bar\mu}(x)))$, and better ranks receive larger multipliers when $U$ is larger near $u=0$. Since $|\Phi|\le1$, Proposition~\ref{prop:app-selection-gen} gives the pre-generator, moment stability, and near-identity estimate. The local-Lipschitz statement additionally requires regularity of the rank map: for exact ranks one can assume~\eqref{eq:rank-stability-hypothesis} and uniform spatial Lipschitz continuity of $x\mapsto F_{\bar\mu}(f(x))$, for instance under a uniform density bound for the fitness pushforward together with Lipschitz $f$; alternatively, use the smoothed rank $u^\eta_{\bar\mu}$. If $U$ and $f$ are Lipschitz, the smoothed-rank construction above gives both the population and spatial regularity required in Assumption~\ref{ass:bounded-pressure-app}(ii)--(iii). For empirical exact ranks, $F_{\bar\mu}$ has jumps and the spatial regularity in~(iii) generally fails; then only~(A1)--(A3) are justified within the present proposition. This is the mean-field analogue of the rank-utility weights used in CMA-ES and in NES fitness shaping~\citep{wierstraNaturalEvolutionStrategies2014,hansenCMAEvolutionStrategy2023}.
\end{example}

\begin{example}[Smoothed tournament selection]\label{ex:app-tournament}
Assume first that the fitness pushforward $f_\#\bar\mu$ has a continuous CDF, so $u_{\bar\mu}(X)$ is uniform on $[0,1]$ for $X\sim\bar\mu$. In an $m$-tournament, the best rank among $m$ independent draws has density
\[
g_m(u)=m(1-u)^{m-1},\qquad u\in[0,1].
\]
A strictly positive smoothed tournament multiplier is
\[
w_m(u):=\delta+(1-\delta)g_m(u),\qquad \delta\in(0,1),
\]
and we set $\Phi(x;\bar\mu):=-\log w_m(u_{\bar\mu}(x))$. For fixed $m$ and $\delta$, $\Phi$ is bounded because $\delta\le w_m(u)\le\delta+(1-\delta)m$. Hence Proposition~\ref{prop:app-selection-gen} gives~(A1)--(A3). Since $u\mapsto-\log w_m(u)$ is Lipschitz for fixed $(m,\delta)$, the full local-Lipschitz conclusion follows under the same rank-stability or smoothing assumptions described above. The unsmoothed case $\delta=0$ is outside the bounded-pressure framework for $m>1$, because $w_m(1)=0$ and the logarithmic pressure becomes unbounded. Tournament and ranking selection are standard finite-population mechanisms in runtime analysis of genetic algorithms~\citep{corusLevelBasedAnalysisGenetic2018}.
\end{example}

\begin{example}[Smoothed approximation of truncation selection]\label{ex:app-truncation}
Hard truncation selects the best fraction $\theta\in(0,1)$ of the population and corresponds formally to a multiplier proportional to $\mathbf{1}\{u_{\bar\mu}(x)\le\theta\}$, which is neither strictly positive nor represented by a bounded function $\Phi=-\log w$. A bounded-log-multiplier approximation is obtained by fixing $T>0$ and $\delta\in(0,1)$ and defining
\[
h_T(u):=\frac{1}{1+\exp((u-\theta)/T)},\qquad
w_{\theta,T,\delta}(u):=\delta+(1-\delta)h_T(u),
\]
Equivalently, $h_T(u)=\sigma((\theta-u)/T)$ with $\sigma(r)=(1+e^{-r})^{-1}$. Thus $h_T$ is only a smooth approximation to the hard cutoff $\mathbf{1}\{u\le\theta\}$, not a separate convention from evolutionary-algorithm terminology. It replaces the discontinuity at the truncation boundary by a transition layer of width $O(T)$; ranks well below $\theta$ receive a multiplier close to one, whereas ranks well above $\theta$ receive a multiplier close to zero. The floor $\delta$ is an analytic device that prevents complete deletion of mass, so that $w_{\theta,T,\delta}$ stays strictly positive and $\Phi=-\log w_{\theta,T,\delta}$ remains bounded. In the formal limit $T\downarrow0$, followed by $\delta\downarrow0$ after balanced normalization, this recovers the usual hard truncation multiplier.
We then set
\[
\Phi(x;\bar\mu):=-\log w_{\theta,T,\delta}(u_{\bar\mu}(x)),
\qquad s_\tau(x;\bar\mu)=w_{\theta,T,\delta}(u_{\bar\mu}(x))^{\tau}.
\]
For fixed $(\theta,T,\delta)$, $\Phi$ is bounded by $|\Phi|\le|\log\delta|$ and is Lipschitz as a function of $u$, with constants deteriorating as $T\downarrow0$ or $\delta\downarrow0$. Thus Proposition~\ref{prop:app-selection-gen} gives~(A1)--(A3), and~(A4) follows under the rank-stability or smoothed-rank hypotheses above. The exact $(\mu,\lambda)$ truncation rule is recovered only as a singular limit, for instance $T\downarrow0$ and $\delta\downarrow0$ after balanced normalization; that limit is deliberately outside the bounded-log-multiplier proposition and should be treated separately if needed. Truncation selection is the standard survivor-selection mechanism of evolution strategies~\citep{rechenberg1973evolutionsstrategie,beyerEvolutionStrategiesComprehensive2002,eibenIntroductionEvolutionaryComputing2015}.
\end{example}

\subsection{Recombination operator}\label{subsec:recombination-main}

Recombination mixes parents into offspring. It is specified by two components: a mating mechanism that selects parent pairs, and a mixing rule that produces an offspring from each pair. This separation mirrors algorithmic practice, where mating selection and the crossover operator are chosen independently, and it isolates the two places where regularity can fail. Requiring both marginals of the pair law to equal $\bar\mu$ expresses that mating by itself does not bias the parent distribution; fitness-biased mate choice should instead be modelled inside the selection operator (see the discussion of bistochasticity at the end of this subsection). The $\tau$-step operator below performs a recombination event with probability $\tau$, so recombination is a jump mechanism: unlike mutation, it transports mass over finite distances at a finite rate, and its pre-generator is the plain difference $\langle\varphi,R[\mu]\rangle-\langle\varphi,\mu\rangle$ with zero remainder in $\tau$ (Proposition~\ref{prop:app-recomb-gen}(i)), which makes recombination the simplest of the three operators in the limit $\tau\downarrow0$.

\begin{definition}[Mating kernel and pair law]\label{def:mating-kernel-pair-law}
Suppose $\mu\in\M_{2,+}(\X)$ with $\mu(\X)>0$, and let $\bar\mu:=\mu/\mu(\X)\in\P_2(\X)$. We call a bounded, non-negative, symmetric function $\Lambda(\cdot,\cdot;\bar\mu)$ a \emph{mating kernel}, and define the \emph{unnormalized pair measure}
\[
\Gamma^\mu(dx,dy):=\Lambda(x,y;\bar\mu)\,\bar\mu(dx)\,\bar\mu(dy).
\]
Let $\overline{\Gamma^\mu}:=\Gamma^\mu/\Gamma^\mu(\X\times\X)\in\P_2(\X\times\X)$ denote the corresponding \emph{normalized pair law}. Finally, let $\beta\colon\P_2(\X)\to(0,\infty)$ be the \emph{amplitude} (the expected number of matings per step).
\end{definition}

\begin{assumption}[Recombination admissibility]\label{ass:recombination-section-ass}
Fix $q\ge 2$. Let $(\Xi,\mathscr{F},\Prob_\xi)$ be a probability space, and let $b\colon\X\times\X\times\Xi\to\X$ be a measurable map. Assume that:
\begin{enumerate}[label=\textup{(\roman*)},leftmargin=*,itemsep=2pt]
\item \emph{Pair-law compatibility and Lipschitz continuity.} For every $\bar\mu\in\P_q(\X)$,
\[
\int_\X\Lambda(x,y;\bar\mu)\,\bar\mu(dy)=1\quad\text{for }\bar\mu\text{-a.e.\ }x,
\]
so that the normalized pair law $\overline{\Gamma^\mu}$ has both marginals equal to $\bar\mu$. Moreover, there exists $L_\Lambda<\infty$ such that for all $\bar\mu,\bar\mu'\in\P_q(\X)$,
\[
W_2\bigl(\overline{\Gamma^{\bar\mu}},\overline{\Gamma^{\bar\mu'}}\bigr)\le L_\Lambda\,W_2(\bar\mu,\bar\mu').
\]
\item \emph{Lipschitz mixing reproduction rule.} There exists $L_b<\infty$ such that for all $x,x',y,y'\in\X$,
\[
\E_\xi\bigl[\|b(x,y;\xi)-b(x',y';\xi)\|^2\bigr]\le L_b\bigl(\|x-x'\|^2+\|y-y'\|^2\bigr).
\]
\item \emph{$q$-moment growth of offspring.} Denote $w_q(x)=1+\|x\|^q$. There exists $C_b^{(q)}<\infty$ such that for all $x,y\in\X$,
\[
\E_\xi\bigl[w_q(b(x,y;\xi))\bigr]\le C_b^{(q)}\bigl(w_q(x)+w_q(y)\bigr).
\]
\item \emph{Bounded amplitude and Lipschitz continuity on moment balls.} For every $r<\infty$ there exists $\beta_r<\infty$ such that
\[
0<\beta(\bar\mu)\le\beta_r \qquad\text{for all }\bar\mu\in\P_q(\X)\text{ with }\int w_q\,d\bar\mu\le r.
\]
Moreover, $\beta$ is Lipschitz in $W_2$ on these sets: $|\beta(\bar\mu)-\beta(\bar\mu')|\le L_\beta(r)\,W_2(\bar\mu,\bar\mu')$.
\end{enumerate}
\end{assumption}

\subsubsection*{Role of the assumptions.}
Each condition has a definite role in Proposition~\ref{prop:app-recomb-gen}. The marginal compatibility in (i) is the bookkeeping identity that makes the balanced map mass-preserving, and the $W_2$-Lipschitz dependence of the pair law on $\bar\mu$ is one of the three ingredients of the local Lipschitz continuity (A4); the Lipschitz mixing rule (ii) and the Lipschitz amplitude in (iv) are the other two. The moment-growth condition (iii) guarantees that offspring inherit $q$-th moments from their parents; it gives the moment stability (A2), underlies the near-identity estimates (A3), and reappears in the convergence analysis, where it dominates the recombination term when the TRJ equation is tested against polynomially growing Lyapunov functions (Lemma~\ref{lem:lyapunov-testing}). The amplitude bound in (iv) matters only for the unbalanced variant, where it controls the birth--death scaling of the total mass.

Many standard crossover operators satisfy Assumption~\ref{ass:recombination-section-ass}(ii). Arithmetic and BLX-$\alpha$~\citep{eshelman1993real} crossover are recovered by $b(x,y;\xi)=x+\alpha(\xi)(y-x)$ with $\alpha(\xi)\sim U(0,1)$ (arithmetic) or $\alpha(\xi)\sim U(-\alpha,1+\alpha)$ (BLX-$\alpha$); the map is $1$-Lipschitz in $(x,y)$ for each $\xi$. SBX~\citep{deb1995simulated} corresponds to $b(x,y;\xi)=\tfrac{1}{2}[(1+\beta(\xi))x+(1-\beta(\xi))y]$ and is Lipschitz whenever $\beta(\xi)$ has finite second moment. Differential-vector blending, as used in differential evolution, is $b(x,y;\xi)=x+F(\xi)(y-x)$ with bounded $F$.

\begin{definition}[Offspring kernel and instantaneous recombination maps]\label{def:instant-recombination-maps}
Let $b\colon\X\times\X\times\Xi\to\X$ be a measurable mixing rule. The associated \emph{offspring kernel} is
\[
p_b((x,y),A):=\int_\Xi \mathbf{1}_{\{b(x,y;\xi)\in A\}}\,\Prob_\xi(d\xi),\qquad A\in\mathscr B(\X).
\]
For $\mu\in\M_{2,+}(\X)$ with $\mu(\X)>0$, the \emph{balanced} and \emph{unbalanced} instantaneous recombination maps are
\[
R^{\rm bal}[\mu](A):=\mu(\X)\iint_{\X\times\X}p_b((x,y),A)\,\overline{\Gamma^{\mu}}(dx,dy),
\qquad
R[\mu]:=\beta(\bar\mu)\,R^{\rm bal}[\mu],
\quad A\in\mathscr B(\X),
\]
so that $R^{\rm bal}[\mu](\X)=\mu(\X)$ and $R[\mu](\X)=\beta(\bar\mu)\,\mu(\X)$. In particular, $R^{\rm bal}[\bar\mu]\in\P(\X)$. We extend by $R[0]=R^{\rm bal}[0]=0$. The associated $\tau$-step recombination operators are the convex combinations
\begin{equation}\label{eq:app-recomb-tau-step}
R_\tau[\mu]:=(1-\tau)\mu+\tau R[\mu],
\qquad
R_\tau^{\rm bal}[\mu]:=(1-\tau)\mu+\tau R^{\rm bal}[\mu],
\quad\tau\in[0,1],
\end{equation}
modelling a recombination event with probability $\tau$ (offspring drawn from $R[\mu]$) and no event with probability $1-\tau$.
\end{definition}

\subsubsection*{Jump (recombination).}
The recombination contribution
\(\langle\varphi,R[\mu_t]\rangle-\langle\varphi,\mu_t\rangle\)
is a pure-jump redistribution operator: a parent configuration is replaced,
at the level of the population law, by offspring generated from a mating
kernel and a mixing rule. Unlike mutation, which moves mass continuously,
and unlike selection, which reweights mass without changing locations,
recombination transports mass discontinuously from the pre-recombination
population law to the offspring law.

If recombination is absent, this term vanishes. The resulting equation is a
transport--reaction equation when selection remains present, and reduces
further to the CBO-type transport--diffusion equation when only the CBO
mutation operator is retained. The examples below verify the Lipschitz
mixing and moment-growth assumptions for standard real-coded mixing
rules. In each case, the verification concerns the offspring map; the pair law
must still satisfy the marginal-compatibility condition in
Assumption~\ref{ass:recombination-section-ass}(i).

\begin{example}[Affine real-coded blending]
Let
\[
b_A(x,y;\xi)=(I-A(\xi))x+A(\xi)y,
\]
where \(A(\xi)\) is either a scalar coefficient or a diagonal matrix of
coordinatewise coefficients. For two parent pairs \((x,y)\) and
\((x',y')\),
\[
b_A(x,y;\xi)-b_A(x',y';\xi)
=(I-A(\xi))(x-x')+A(\xi)(y-y').
\]
If the coefficients of \(A\) lie in a bounded interval \([a_-,a_+]\), then
\[
\|b_A(x,y;\xi)-b_A(x',y';\xi)\|^2
\le L_b\bigl(\|x-x'\|^2+\|y-y'\|^2\bigr),
\]
with
\[
L_b
\le
\sup_{a\in[a_-,a_+]}\bigl((1-a)^2+a^2\bigr).
\]
Thus arithmetic or segment blending, where \(a\in[0,1]\), satisfies
\(L_b\le1\). The standard BLX-\(\alpha\) operator is obtained
coordinatewise by sampling each coefficient from
\([-\alpha,1+\alpha]\); in this case
\[
L_b\le (1+\alpha)^2+\alpha^2=1+2\alpha+2\alpha^2.
\]
The \(q\)-moment growth bound follows from
\[
\|b_A(x,y;\xi)\|^q
\le
2^{q-1}\bigl(\|I-A(\xi)\|_{\mathrm{op}}^q\|x\|^q
+\|A(\xi)\|_{\mathrm{op}}^q\|y\|^q\bigr),
\]
and hence holds whenever the coefficient distribution has bounded
\(q\)-th moment. This also covers two-parent intermediate recombination;
the usual multi-parent intermediate recombination in ES is obtained by
the analogous finite-parent affine map. In particular, ES intermediate
recombination is an arithmetic centroid of the selected parental vectors,
while discrete recombination selects coordinates from parents
coordinatewise~\citep{beyerEvolutionStrategiesComprehensive2002}.
\end{example}

\begin{example}[Simulated binary crossover]
The simulated binary crossover (SBX)~\citep{deb1995simulated} can be written, for one offspring branch, as
\[
b(x,y;\xi)
=
\frac12\bigl[(1+\varepsilon(\xi)\beta_{\rm SBX}(\xi))x
+(1-\varepsilon(\xi)\beta_{\rm SBX}(\xi))y\bigr],
\]
where \(\varepsilon(\xi)\in\{-1,+1\}\) selects one of the two symmetric
offspring branches. Setting
\[
a(\xi)=\frac{1+\varepsilon(\xi)\beta_{\rm SBX}(\xi)}{2},
\]
this is again an affine map. Since
\[
a(\xi)^2+(1-a(\xi))^2
=
\frac{1+\beta_{\rm SBX}(\xi)^2}{2},
\]
Assumption~\ref{ass:recombination-section-ass}(ii) holds with
\[
L_b=\frac{1+\E[\beta_{\rm SBX}^2]}{2},
\]
provided the spread factor has finite second moment. Likewise the
\(q\)-moment growth bound holds whenever
\(\E|\beta_{\rm SBX}|^q<\infty\). For the usual unbounded SBX
distribution this moment condition imposes the corresponding restriction
on the distribution index; bounded implementations satisfy it automatically.
\end{example}

\begin{example}[Differential-vector blending]
In differential evolution~\citep{storn1997differential}, the donor or
mutant vector has the form
\[
v=x_r+F(x_s-x_t),
\]
with distinct population indices in the finite-\(N\) algorithm. Crossover
with the target vector then forms the trial vector, and greedy replacement
compares the trial vector with the target.

There are two useful ways to embed this mechanism. A two-parent
differential-style surrogate,
\[
b(x,y;\xi)=x+F(\xi)(y-x),
\]
is an affine blending map and inherits the preceding bounds when
\(F\) has bounded or sufficiently many finite moments. The literal DE donor
uses a three-parent map
\[
b(x_r,x_s,x_t;\xi)=x_r+F(\xi)(x_s-x_t).
\]
For bounded deterministic \(F\),
\[
\|b(x_r,x_s,x_t)-b(x_r',x_s',x_t')\|^2
\le
(1+2F^2)
\bigl(
\|x_r-x_r'\|^2+\|x_s-x_s'\|^2+\|x_t-x_t'\|^2
\bigr).
\]
If \(F\) is random, the same bound holds with \(1+2\E F^2\). Moreover,
\[
\|b(x_r,x_s,x_t)\|^q
\le
3^{q-1}
\bigl(
\|x_r\|^q+|F|^q\|x_s\|^q+|F|^q\|x_t\|^q
\bigr),
\]
so the three-parent moment condition holds when \(F\) has finite
\(q\)-th moment. The product law \(\bar\mu^{\otimes3}\) is the
with-replacement mean-field idealization; the usual finite-population rule
with mutually distinct indices is represented exactly by a without-replacement
finite-\(N\) law and converges to the product law as \(N\to\infty\).    
\end{example}

\subsubsection*{Pair law and amplitude.}
The examples above specify only the mixing map \(b\). To satisfy
Assumption~\ref{ass:recombination-section-ass}(i), the mating law must
also have the prescribed marginals. The simplest admissible choice is
uniform independent pairing,
\[
\overline{\Gamma^\mu}=\bar\mu\otimes\bar\mu,
\]
or \(\bar\mu^{\otimes3}\) in the three-parent DE idealization. Then
\[
W_2^2(\bar\mu\otimes\bar\mu,\bar\nu\otimes\bar\nu)
\le
2W_2^2(\bar\mu,\bar\nu),
\]
and similarly the product triple law gives the constant \(\sqrt3\).

More general mating kernels are admissible only when they are
\(\bar\mu\)-bistochastic, namely
\[
\int_\X \Lambda(x,y;\bar\mu)\,\bar\mu(dy)=1
\quad\text{for \(\bar\mu\)-a.e. }x,
\]
and, together with symmetry, therefore preserve both marginals of the pair
law. Generic fitness-proportional or rank-based mating kernels do not
satisfy this condition, because they change the marginal distribution of
selected parents. Such mechanisms should either be absorbed into the
selection operator or treated with a separate recombination assumption
allowing non-preserved marginals.

For ordinary mass-preserving recombination the canonical amplitude is
\(\beta\equiv1\), so \(R=R^{\rm bal}\). Other positive bounded amplitudes are
mathematically admissible under Assumption~\ref{ass:recombination-section-ass}(iv),
but they describe an unbalanced birth--death scaling of the recombination
event rather than standard crossover.
\section{Proofs of the main results}\label{sec:main-proofs}

This section collects the proofs of the main results stated in
Sections~\ref{sec:operators} and~\ref{sec:lyapunov}: the composition
theorem (Theorem~\ref{thm:operator-composition}), the TRJ decomposition
(Theorem~\ref{thm:TRJ-decomposition}), the induced evolution corollary
(Corollary~\ref{cor:induced-S}), and the mean-field convergence theorem
(Theorem~\ref{thm:mean-field-convergence}). The verification that the
canonical mutation, selection, and recombination operators satisfy the
regularity conditions of Assumption~\ref{assumption:composite}, on which
these proofs rely, is carried out in Appendix~\ref{app:operators}.

\subsection{Proof of Theorem~\ref{thm:operator-composition} (Generator of the Composition)}\label{app:composition-proof}

\begin{proof}[Proof of Theorem~\ref{thm:operator-composition}]
Fix $r<\infty$ and $\varphi\in\mathcal{D}$. Let $\mu\in\M_{q,+}^r$ be arbitrary. Define the intermediate measures
\[
\mu^{(0)}:=\mu,\qquad \mu^{(j)}:=T_\tau^{(j)}[\mu^{(j-1)}],\qquad j=1,\ldots,J,
\]
so that $\mu^{(J)}=T_\tau[\mu]$. By Assumption~\ref{assumption:composite}(A2), there exist $r'(r)<\infty$ and $\tau_0(r)>0$ such that for all $\tau\in(0,\tau_0(r)]$ and all $\mu\in\M_{q,+}^r$,
\[
\mu^{(j)}\in\M_{q,+}^{r'(r)}\qquad\text{for all }j=0,1,\ldots,J.
\]
In what follows we work with $\tau\in(0,\tau_0]$ and write $r':=r'(r)$. This ensures that whenever we apply (A1), (A3), or (A4) at an intermediate measure $\mu^{(j)}$, we may do so uniformly over $\mu\in\M_{q,+}^r$, since $\mu^{(j)}\in\M_{q,+}^{r'}$.

By telescoping, we can write the increment as
\begin{equation}\label{eq:telescoping-identity}
\langle\varphi,T_\tau[\mu]\rangle-\langle\varphi,\mu\rangle=\langle\varphi,\mu^{(J)}\rangle-\langle\varphi,\mu^{(0)}\rangle=\sum_{j=1}^J\bigl(\langle\varphi,\mu^{(j)}\rangle-\langle\varphi,\mu^{(j-1)}\rangle\bigr).
\end{equation}
Dividing by $\tau$,
\[
\frac{\langle\varphi,T_\tau[\mu]\rangle-\langle\varphi,\mu\rangle}{\tau}=\sum_{j=1}^J\frac{\langle\varphi,T_\tau^{(j)}[\mu^{(j-1)}]\rangle-\langle\varphi,\mu^{(j-1)}\rangle}{\tau}.
\]
For each component~$j$, define the pre-generator remainder
\[
R_{j,\tau}(\eta;\varphi):=\frac{\langle\varphi,T_\tau^{(j)}[\eta]\rangle-\langle\varphi,\eta\rangle}{\tau}-G_j[\eta](\varphi),\qquad \eta\in\M_{q,+}(\X).
\]
Then~\eqref{eq:telescoping-identity} becomes
\[
\frac{\langle\varphi,T_\tau[\mu]\rangle-\langle\varphi,\mu\rangle}{\tau}=\sum_{j=1}^J G_j[\mu^{(j-1)}](\varphi)+\sum_{j=1}^J R_{j,\tau}(\mu^{(j-1)};\varphi).
\]
Subtracting $\sum_j G_j[\mu](\varphi)$ from both sides,
\begin{equation}\label{eq:remainder}
\frac{\langle\varphi,T_\tau[\mu]\rangle-\langle\varphi,\mu\rangle}{\tau}-\sum_{j=1}^J G_j[\mu](\varphi)=\underbrace{\sum_{j=1}^J\bigl(G_j[\mu^{(j-1)}](\varphi)-G_j[\mu](\varphi)\bigr)}_{=:I_1}+\underbrace{\sum_{j=1}^J R_{j,\tau}(\mu^{(j-1)};\varphi)}_{=:I_2}.
\end{equation}
We will show that each bracketed term vanishes uniformly over $\mu\in\M_{q,+}^r$ as $\tau\downarrow0$.

\emph{Bound on $I_2$.} By (A2), for $\tau\in(0,\tau_0]$ and $\mu\in\M_{q,+}^r$, every $\mu^{(j-1)}\in\M_{q,+}^{r'}$. Since $\M_{q,+}^r\subseteq\M_{q,+}^{r'}$,
\[
\sup_{\mu\in\M_{q,+}^r}|R_{j,\tau}(\mu^{(j-1)};\varphi)|\;\le\;\sup_{\eta\in\M_{q,+}^{r'}}|R_{j,\tau}(\eta;\varphi)|.
\]
By Assumption~\ref{assumption:composite}(A1), for each fixed $j$,
\[
\sup_{\eta\in\M_{q,+}^{r'}}|R_{j,\tau}(\eta;\varphi)|\;\xrightarrow[\tau\downarrow0]{}\;0.
\]
Since $J$ is finite, summing yields
\begin{equation}\label{eq:bound-for-remainder-sum}
|I_2|\;\le\;\sup_{\mu\in\M_{q,+}^r}\Bigl|\sum_{j=1}^J R_{j,\tau}(\mu^{(j-1)};\varphi)\Bigr|\;\le\;\sum_{j=1}^J\sup_{\eta\in\M_{q,+}^{r'}}|R_{j,\tau}(\eta;\varphi)|\;\xrightarrow[\tau\downarrow0]{}\;0.
\end{equation}

\emph{Bound on $I_1$.} We first establish a uniform estimate on how far each intermediate measure $\mu^{(j-1)}$ drifts from $\mu$. Fix $j\in\{1,\ldots,J\}$. By the triangle inequality,
\[
d_{q,2}^*(\mu^{(j-1)},\mu)\;\le\;\sum_{k=1}^{j-1}d_{q,2}^*(\mu^{(k)},\mu^{(k-1)}),
\]
where $\mu^{(k)}=T_\tau^{(k)}[\mu^{(k-1)}]$ and Assumption~\ref{assumption:composite}(A2) ensures $\mu^{(k-1)}\in\M_{q,+}^{r'}$. Hence by Assumption~\ref{assumption:composite}(A3),
\[
d_{q,2}^*(\mu^{(k)},\mu^{(k-1)})=d_{q,2}^*\bigl(T_\tau^{(k)}[\mu^{(k-1)}],\mu^{(k-1)}\bigr)\;\le\; C_k(r')\,\tau^{\alpha_k},
\]
uniformly over all $\mu\in\M_{q,+}^r$ and $\tau\in(0,\tau_0]$. Therefore,
\begin{equation}\label{eq:intermediate-step-bound}
\sup_{\mu\in\M_{q,+}^r}d_{q,2}^*(\mu^{(j-1)},\mu)\;\le\;\sum_{k=1}^{j-1}C_k(r')\,\tau^{\alpha_k}.
\end{equation}
In particular, this is $o(1)$ uniformly over $\mu\in\M_{q,+}^r$. By Assumption~\ref{assumption:composite}(A4), for each $j$ and fixed $\varphi\in\mathcal D$ there exists $L_{j,\varphi}(r')<\infty$ such that for all $\eta,\nu\in\M_{q,+}^{r'}$,
\[
|G_j[\eta](\varphi)-G_j[\nu](\varphi)|\;\le\;L_{j,\varphi}(r')\,d_{q,2}^*(\eta,\nu).
\]
Apply this with $\eta=\mu^{(j-1)}$ and $\nu=\mu$; both lie in $\M_{q,+}^{r'}$ by~(A2). Using~\eqref{eq:intermediate-step-bound},
\begin{align*}
\sup_{\mu\in\M_{q,+}^r}|G_j[\mu^{(j-1)}](\varphi)-G_j[\mu](\varphi)|
&\le L_{j,\varphi}(r')\sup_{\mu\in\M_{q,+}^r}d_{q,2}^*(\mu^{(j-1)},\mu)\\
&\le L_{j,\varphi}(r')\Bigl(\sum_{k=1}^{j-1}C_k(r')\Bigr)\tau^{\alpha_{\min}},
\end{align*}
where $\alpha_{\min}:=\min_{1\le k\le J}\alpha_k\in(0,1]$. Summing over $j=1,\ldots,J$, and noting that the $j=1$ term vanishes (since $\mu^{(0)}=\mu$),
\begin{equation}\label{eq:bound-for-generator-diff-sum}
|I_1|\;\le\;\sup_{\mu\in\M_{q,+}^r}\Bigl|\sum_{j=1}^J\bigl(G_j[\mu^{(j-1)}](\varphi)-G_j[\mu](\varphi)\bigr)\Bigr|\;\le\;\sum_{j=1}^J L_{j,\varphi}(r')\Bigl(\sum_{k=1}^{j-1}C_k(r')\Bigr)\tau^{\alpha_{\min}},
\end{equation}
which vanishes uniformly as $\tau\downarrow0$.

Combining \eqref{eq:bound-for-remainder-sum} and \eqref{eq:bound-for-generator-diff-sum},
\[
\sup_{\mu\in\M_{q,+}^r}\biggl|\frac{\langle\varphi,T_\tau[\mu]\rangle-\langle\varphi,\mu\rangle}{\tau}-\sum_{j=1}^J G_j[\mu](\varphi)\biggr|\;\le\;\underbrace{\sup_{\mu\in\M_{q,+}^r}|I_1|}_{\to0}+\underbrace{\sup_{\mu\in\M_{q,+}^r}|I_2|}_{\to0}\;\xrightarrow[\tau\downarrow0]{}\;0,
\]
which shows that the pre-generator of the composition is the sum of the component pre-generators. Finally, $G[\mu]:=\sum_{j=1}^J G_j[\mu]$ is well-defined as an element of $\mathcal D'$ because each $G_j[\mu]\in\mathcal D'$ and $\mathcal D'$ is a linear space, completing the proof.
\end{proof}

\subsection{Proof of Theorem~\ref{thm:TRJ-decomposition} ($\mathcal{D}$-weak formulation and TRJ decomposition)}\label{app:D-weak-TRJ}

This subsection collects the relevant definitions for the TRJ equation, including the precise notion of $\mathcal{D}$-weak solution, and gives the full proof of Theorem~\ref{thm:TRJ-decomposition}. Throughout, ``$\mathcal{D}$-weak'' refers to the measure-valued formulation obtained by testing against $\varphi\in\mathcal{D}=C_b^3(\X)$. We emphasise that this is not the same object as a classical \emph{weak (distributional) solution} of a Fokker--Planck-type PDE: the latter is a function $p_t(x)$ satisfying an identity against $C_c^\infty$ test functions, whereas a $\mathcal{D}$-weak solution is a measure-valued curve $(\mu_t)$ which need not admit a density. The two coincide when $\mu_t(dx)=p_t(x)\,dx$ with sufficient regularity, but the measure-valued formulation is the natural level for the mean-field TRJ equation.

\begin{definition}[$\mathcal{D}$-weak balanced TRJ solution]\label{def:D-weak-balanced-TRJ}
\begin{sloppypar}
Let $q\ge 3$, $\mathcal{D}:=C_b^3(\X)$, and for $\mu\in\M_{q,+}(\X)$ write $\bar\mu:=\mu/\mu(\X)$, $\bar\Phi(\mu):=\frac{1}{\mu(\X)}\int_\X\Phi(x;\mu)\mu(dx)$, and $a_M(x;\bar\mu):=B_M(x;\bar\mu)\Sigma B_M(x;\bar\mu)^\top$. For $\varphi\in\mathcal{D}$, define the balanced TRJ action
\begin{multline*}
G[\mu](\varphi):=\underbrace{\int_\X\!\bigl(\nabla\varphi\cdot b_M+\tfrac12\Tr(a_M D^2\varphi)\bigr)d\mu}_{\text{transport--diffusion (mutation)}}
-\underbrace{\int_\X\!(\Phi-\bar\Phi)\varphi\,d\mu}_{\text{reaction (balanced selection)}}\\
+\underbrace{\bigl(\langle\varphi,R^{\rm bal}[\mu]\rangle-\langle\varphi,\mu\rangle\bigr)}_{\text{jump (balanced recombination)}}.
\end{multline*}
A curve $t\mapsto\mu_t\in\M_{q,+}(\X)$ is a \emph{$\mathcal{D}$-weak solution of the balanced TRJ equation} if, for every $T<\infty$,
\[
\sup_{0\le t\le T}\int_\X(1+\|x\|^q)\,\mu_t(dx)<\infty,\qquad \inf_{0\le t\le T}\mu_t(\X)>0,
\]
and, for every $\varphi\in\mathcal{D}$, the map $t\mapsto G[\mu_t](\varphi)$ is integrable on $[0,T]$ and
\[
\langle\varphi,\mu_t\rangle-\langle\varphi,\mu_s\rangle=\int_s^t G[\mu_\ell](\varphi)\,d\ell\qquad\text{for all }0\le s\le t\le T.
\]
Equivalently in $\mathcal D'$: for every $\varphi\in\mathcal D$, the map $t\mapsto\langle\varphi,\mu_t\rangle$ is absolutely continuous on $[0,T]$ and $\frac{d}{dt}\langle\varphi,\mu_t\rangle=G[\mu_t](\varphi)$ for a.e.\ $t\in[0,T]$. In the normalized case $\mu_t\in\P_q(\X)$, the lower mass condition is automatic.
\end{sloppypar}
\end{definition}

\begin{remark}[Interpretation of the displayed TRJ equation]\label{rem:TRJ-duality}
For $\mu\in\M_{q,+}(\X)$ and $\varphi\in\mathcal D$, the right-hand side of the balanced TRJ equation~\eqref{eq:TRJ} is understood by duality on $\mathcal D=C_b^3(\X)$ through
\begin{align*}
\bigl\langle\varphi,-\nabla\!\cdot\!\bigl(b_M(\cdot;\bar\mu)\mu\bigr)\bigr\rangle
&:=\int_\X \nabla\varphi(x)\cdot b_M(x;\bar\mu)\,\mu(dx),\\
\Bigl\langle\varphi,\tfrac12\textstyle\sum_{i,j=1}^d\partial_{ij}\!\bigl(a_{M,ij}(\cdot;\bar\mu)\mu\bigr)\Bigr\rangle
&:=\tfrac12\int_\X \Tr\!\bigl(a_M(x;\bar\mu)D^2\varphi(x)\bigr)\,\mu(dx),\\
\bigl\langle\varphi,-\bigl(\Phi(\cdot;\mu)-\bar\Phi(\mu)\bigr)\mu\bigr\rangle
&:=-\int_\X\bigl(\Phi(x;\mu)-\bar\Phi(\mu)\bigr)\varphi(x)\,\mu(dx),\\
\bigl\langle\varphi,R^{\rm bal}[\mu]-\mu\bigr\rangle
&:=\langle\varphi,R^{\rm bal}[\mu]\rangle-\langle\varphi,\mu\rangle.
\end{align*}
For $\varphi\in C_c^\infty(\X)\subset\mathcal D$, these identities agree with the usual distributional formulas whenever those are legitimate. For the actual test class $\mathcal D=C_b^3(\X)$, they \emph{define} the action of the displayed operator in $\mathcal D'$.
\end{remark}

\begin{lemma}[Scalar absolute continuity of TRJ observables]\label{lem:TRJ-scalar-AC}
Under Assumptions~\ref{ass:mutation-diffusion}, \ref{ass:bounded-pressure-app}, and \ref{ass:recombination-section-ass}, for every $\varphi\in\mathcal{D}$ and every $r,m_0>0$ there exists $C_{\varphi,r,m_0}<\infty$ such that
\[
|G_M[\mu](\varphi)+G_S^{\rm bal}[\mu](\varphi)+G_R^{\rm bal}[\mu](\varphi)|\le C_{\varphi,r,m_0}\qquad\text{for all }\mu\in\M_{q,+}^{r,m_0}.
\]
Consequently, if $(\mu_t)_{t\in[0,T]}$ is a $d_{q,2}^*$-Borel curve contained in some $\M_{q,+}^{r,m_0}$ and satisfying, for every $\varphi\in\mathcal D$,
\[
\langle\varphi,\mu_t\rangle-\langle\varphi,\mu_s\rangle=\int_s^t\bigl(G_M[\mu_\ell](\varphi)+G_S^{\rm bal}[\mu_\ell](\varphi)+G_R^{\rm bal}[\mu_\ell](\varphi)\bigr)\,d\ell,\qquad 0\le s\le t\le T,
\]
then $t\mapsto\langle\varphi,\mu_t\rangle$ is Lipschitz on $[0,T]$, and hence locally absolutely continuous.
\end{lemma}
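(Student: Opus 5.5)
The plan is to bound each of the three pre-generator terms separately on the sublevel $\M_{q,+}^{r,m_0}$, reading this as the set of $\mu$ with $\int(1+\|x\|^q)\,d\mu\le r$ and $\mu(\X)\ge m_0$. The Lipschitz statement then follows directly from the integral identity. Throughout, write $\|\varphi\|_{C^3_b}$ for the maximum of the sup-norms of $\varphi,\nabla\varphi,D^2\varphi$. Two consequences of membership in the sublevel are used repeatedly. First, $\mu(\X)\le r$ and $\mu(\X)\ge m_0$. Second, the normalization satisfies
\[
\int(1+\|x\|^q)\,d\bar\mu\le r/m_0,
\]
so $\bar\mu$ lies in a moment ball of radius $r/m_0$. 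All coefficient constants $C(\cdot)$, $C_\Phi(\cdot)$ and $\beta_\cdot$ from Assumptions~\ref{ass:mutation-diffusion}, \ref{ass:bounded-pressure-app} and \ref{ass:recombination-section-ass} are evaluated at that radius. This is exactly where $m_0$ enters.

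\emph{Mutation term.} By the linear growth in Assumption~\ref{ass:mutation-diffusion}(i),
\[
|\nabla\varphi\cdot b_M|\le\|\varphi\|_{C^3_b}\,C(1+\|x\|),
\]
and since $\|a_M\|\le\|\Sigma\|\,\|B_M\|_F^2$,
\[
|\Tr(a_MD^2\varphi)|\le\|\varphi\|_{C^3_b}\,\|\Sigma\|\,C^2(1+\|x\|)^2.
\]
Because $q\ge3\ge2$, we have $(1+\|x\|)^2\le 2(1+\|x\|^q)\cdot\mathrm{const}$. Integrating against $\mu$ then gives a bound of the form $c\,\|\varphi\|_{C^3_b}(1+C)^2\|\Sigma\|\,r$.

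\emph{Selection term.} Assumption~\ref{ass:bounded-pressure-app}(i) gives $|\Phi|\le C_\Phi$, and hence $|\bar\Phi|\le C_\Phi$. Therefore
\[
\Bigl|\int(\Phi-\bar\Phi)\varphi\,d\mu\Bigr|\le 2C_\Phi\|\varphi\|_\infty\,\mu(\X)\le 2C_\Phi\|\varphi\|_\infty\, r.
\]

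\emph{Recombination term.} Since $R^{\rm bal}[\mu]$ has mass $\mu(\X)$, we get
\[
|\langle\varphi,R^{\rm bal}[\mu]\rangle-\langle\varphi,\mu\rangle|\le 2\|\varphi\|_\infty\,\mu(\X)\le 2\|\varphi\|_\infty\, r.
\]
No moment condition is needed for this term, because $\varphi$ is bounded. Summing the three bounds yields $C_{\varphi,r,m_0}$.

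For the consequence, fix $\varphi\in\mathcal D$ and assume the curve stays in $\M_{q,+}^{r,m_0}$. The first part shows that the integrand $\ell\mapsto G[\mu_\ell](\varphi)$ is bounded by $C_{\varphi,r,m_0}$. Measurability of the integrand is implicit, since the integral identity is assumed to make sense; alternatively it follows from the $d_{q,2}^*$-Borel property together with the continuity in (A4) established in Appendix~\ref{app:operators}. The identity then gives
\[
|\langle\varphi,\mu_t\rangle-\langle\varphi,\mu_s\rangle|\le C_{\varphi,r,m_0}|t-s|,
\]
so the map is Lipschitz on $[0,T]$. Lipschitz functions are absolutely continuous, and by the Lebesgue differentiation theorem the derivative equals the integrand a.e.

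The only genuinely delicate point is the mutation term. There one must make sure the coefficient constants are uniform over the sublevel. That requires the lower mass bound $m_0$, which transfers the moment control from $\mu$ to $\bar\mu$. The requirement $q\ge2$ (here $q\ge3$) is what absorbs the quadratic growth of $a_M$.
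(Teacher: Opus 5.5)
Your proposal is correct and follows the paper's proof essentially step for step. It uses the same three term-by-term bounds on $\M_{q,+}^{r,m_0}$: linear growth of $b_M,B_M$, with $m_0$ transferring the moment bound to $\bar\mu$; bounded pressure, giving $2C_\Phi\|\varphi\|_\infty r$; and mass preservation of $R^{\rm bal}$, giving $2\|\varphi\|_\infty r$. It then derives the Lipschitz estimate directly from the integrated identity.

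Two minor fixes are needed. The mutation assumption is indexed by third-moment sublevels, so the radius for $\bar\mu$ should be $2r/m_0$, obtained via $1+\|x\|^3\le 2(1+\|x\|^q)$. Your mutation constant should also carry the factor $(1+\|\Sigma\|)$ rather than $\|\Sigma\|$, because the drift term does not involve $\Sigma$.
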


\begin{proof}
Let $\mu\in\M_{q,+}^{r,m_0}$. Since $q\ge 3$, the elementary bounds $w_2\le 2w_q$ and $w_3\le 2w_q$ imply
\[
\mu(\X)\le r,\qquad \int_\X w_2\,d\mu\le 2r,\qquad \int_\X w_3\,d\bar\mu\le \frac{2r}{m_0}.
\]
Thus the normalized laws $\bar\mu$ remain in a fixed third-moment sublevel.

\emph{Mutation contribution.} By Assumption~\ref{ass:mutation-diffusion}(i), applied on the normalized moment sublevel with radius $2r/m_0$,
\[
\|b_M(x;\bar\mu)\|+\|B_M(x;\bar\mu)\|_F\le C(2r/m_0)(1+\|x\|).
\]
Since $a_M=B_M\Sigma B_M^\top$, $\|a_M(x;\bar\mu)\|_F\le\|\Sigma\|_{\rm op}C(2r/m_0)^2(1+\|x\|)^2$. Using $(1+\|x\|)\le 2w_q(x)$ and $(1+\|x\|)^2\le 4w_q(x)$,
\begin{align*}
|G_M[\mu](\varphi)|
&\le\|\nabla\varphi\|_\infty\!\int_\X\|b_M(x;\bar\mu)\|\,\mu(dx)+\tfrac12\|D^2\varphi\|_{F,\infty}\!\int_\X\|a_M(x;\bar\mu)\|_F\,\mu(dx)\\
&\le 2r\,C(2r/m_0)\|\nabla\varphi\|_\infty+2r\,\|\Sigma\|_{\rm op}C(2r/m_0)^2\|D^2\varphi\|_{F,\infty}.
\end{align*}

\emph{Selection contribution.} Assumption~\ref{ass:bounded-pressure-app}(i) gives $|\Phi(x;\mu)|\le C_\Phi(r)$ for $\mu\in\M_{q,+}^r$, hence $|\bar\Phi(\mu)|\le C_\Phi(r)$, and
\[
|G_S^{\rm bal}[\mu](\varphi)|\le 2C_\Phi(r)\|\varphi\|_\infty\,\mu(\X)\le 2rC_\Phi(r)\|\varphi\|_\infty.
\]

\emph{Recombination contribution.} By mass preservation of the balanced recombination map, $R^{\rm bal}[\mu](\X)=\mu(\X)$. Consequently,
\[
|G_R^{\rm bal}[\mu](\varphi)|\le\|\varphi\|_\infty\bigl(R^{\rm bal}[\mu](\X)+\mu(\X)\bigr)\le 2r\|\varphi\|_\infty.
\]

Adding the three estimates yields the asserted bound, for instance with
\[
C_{\varphi,r,m_0}:=2r\,C(2r/m_0)\|\nabla\varphi\|_\infty+2r\,\|\Sigma\|_{\rm op}C(2r/m_0)^2\|D^2\varphi\|_{F,\infty}+2r\bigl(C_\Phi(r)+1\bigr)\|\varphi\|_\infty.
\]
The formulas for the component pre-generators and the $d_{q,2}^*$-Borelness of the curve give measurability of the right-hand side along $(\mu_t)_{t\in[0,T]}$; along any curve contained in $\M_{q,+}^{r,m_0}$ this right-hand side is bounded in absolute value by $C_{\varphi,r,m_0}$. The integrated identity then gives
\[
|\langle\varphi,\mu_t\rangle-\langle\varphi,\mu_s\rangle|\le C_{\varphi,r,m_0}|t-s|,\qquad 0\le s\le t\le T,
\]
which proves the Lipschitz, and hence the local absolute-continuity, claim.
\end{proof}

\begin{proof}[Proof of Theorem~\ref{thm:TRJ-decomposition}]
\emph{Part (i): Additive pre-generator.} By Proposition~\ref{prop:app-mutation-gen}(i), Proposition~\ref{prop:app-selection-gen}(i), and Proposition~\ref{prop:app-recomb-gen}(i), the balanced mutation, selection, and recombination steps admit pre-generators $G_M$, $G_S^{\rm bal}$, and $G_R^{\rm bal}$ in $\mathcal D'$. By hypothesis, the moment-stability, near-identity, and local-Lipschitz estimates from Propositions~\ref{prop:app-mutation-gen}--\ref{prop:app-recomb-gen} imply Assumption~\ref{assumption:composite} on each sublevel $\M_{q,+}^{r,m_0}$. Therefore Theorem~\ref{thm:operator-composition} applies and yields the additive pre-generator
\[
G[\mu]=G_M[\mu]+G_S^{\rm bal}[\mu]+G_R^{\rm bal}[\mu]\qquad\text{in }\mathcal D',
\]
together with the uniform convergence
\[
\sup_{\mu\in\M_{q,+}^{r,m_0}}\Biggl|\frac{\langle\varphi,T_\tau[\mu]\rangle-\langle\varphi,\mu\rangle}{\tau}-G[\mu](\varphi)\Biggr|\xrightarrow[\tau\downarrow 0]{}0\qquad\forall\varphi\in\mathcal D,\ r,m_0>0,
\]
which establishes~(i).

\emph{Part (ii): Mean-field equation.} Fix $r,m_0>0$, let $\mu\in\M_{q,+}^{r,m_0}$, and let $\varphi\in\mathcal D$. Since $q\ge 3$ and $w_q(x)=1+\|x\|^q$, the elementary bounds $w_2\le 2w_q$ and $w_3\le 2w_q$ imply
\[
\int_\X w_2\,d\mu\le 2r,\qquad \int_\X w_3\,d\bar\mu\le\frac{2r}{m_0}.
\]
Hence the explicit pre-generator formulas from Propositions~\ref{prop:app-mutation-gen}(i), \ref{prop:app-selection-gen}(i), and \ref{prop:app-recomb-gen}(i) apply at $\mu$ and give
\begin{align*}
G_M[\mu](\varphi)
&=\int_\X\nabla\varphi(x)\cdot b_M(x;\bar\mu)\,\mu(dx)+\tfrac12\!\int_\X\Tr\!\bigl(a_M(x;\bar\mu)D^2\varphi(x)\bigr)\,\mu(dx),\\
G_S^{\rm bal}[\mu](\varphi)
&=-\!\int_\X\!\bigl(\Phi(x;\mu)-\bar\Phi(\mu)\bigr)\varphi(x)\,\mu(dx),\\
G_R^{\rm bal}[\mu](\varphi)
&=\langle\varphi,R^{\rm bal}[\mu]\rangle-\langle\varphi,\mu\rangle.
\end{align*}
Summing yields $G[\mu](\varphi)=G_M[\mu](\varphi)+G_S^{\rm bal}[\mu](\varphi)+G_R^{\rm bal}[\mu](\varphi)$, which matches the explicit form recorded in Definition~\ref{def:D-weak-balanced-TRJ}. Since $r,m_0>0$ are arbitrary, this identity holds for every $\mu\in\M_{q,+}(\X)$ with $\mu(\X)>0$.

Now let $T>0$ and let $(\mu_t)_{t\in[0,T]}$ be a $d_{q,2}^*$-Borel curve satisfying the bounds in Definition~\ref{def:D-weak-balanced-TRJ}. Then $\mu_t\in\M_{q,+}^{r,m_0}$ for suitable $r,m_0>0$ and all $t\in[0,T]$. By Lemma~\ref{lem:TRJ-scalar-AC}, for every $\varphi\in\mathcal D$,
\[
|G[\mu_t](\varphi)|\le C_{\varphi,r,m_0}\qquad\text{for all }t\in[0,T],
\]
hence $t\mapsto G[\mu_t](\varphi)$ belongs to $L^1(0,T)$. Fix now $\varphi\in\mathcal D$. Since
\[
t\mapsto\langle\varphi,\mu_0\rangle+\int_0^t G[\mu_\ell](\varphi)\,d\ell
\]
is absolutely continuous on $[0,T]$ with a.e.\ derivative $G[\mu_t](\varphi)$, the integral identity
\[
\langle\varphi,\mu_t\rangle-\langle\varphi,\mu_s\rangle=\int_s^t G[\mu_\ell](\varphi)\,d\ell,\qquad 0\le s\le t\le T,
\]
is equivalent to: $t\mapsto\langle\varphi,\mu_t\rangle$ is absolutely continuous on $[0,T]$ and $\frac{d}{dt}\langle\varphi,\mu_t\rangle=G[\mu_t](\varphi)$ for a.e.\ $t\in[0,T]$. By Remark~\ref{rem:TRJ-duality}, this is precisely the displayed TRJ equation~\eqref{eq:TRJ} in $\mathcal D'$, which establishes~(ii).
\end{proof}

\subsection{The induced evolution corollary: from parameters to candidates}\label{app:induced-intuition}

We prove Corollary~\ref{cor:induced-S}, then provide intuition for its role in the framework.
\begin{proof}[Proof of Corollary~\ref{cor:induced-S}]
For every $t\ge 0$, by the definition of the search law~\eqref{eq:search-law} and Fubini's theorem,
\[
\langle\phi,\nu_t\rangle=\int_\S\phi(y)\,\Pi[\bar\mu_t](dy)=\int_\X\int_\S\phi(y)\,K(x,dy)\,\bar\mu_t(dx)=\langle K_*\phi,\bar\mu_t\rangle.
\]
Since $K_*\phi\in\mathcal D$, the $\mathcal D$-weak formulation of the general evolution equation~\eqref{eq:general-evolution} for $(\bar\mu_t)_{t\ge 0}$, tested against $K_*\phi$, yields
\[
\langle K_*\phi,\bar\mu_t\rangle-\langle K_*\phi,\bar\mu_s\rangle=\int_s^t G[\bar\mu_\ell](K_*\phi)\,d\ell,\qquad 0\le s\le t.
\]
Using the identity $\langle\phi,\nu_t\rangle=\langle K_*\phi,\bar\mu_t\rangle$,
\[
\langle\phi,\nu_t\rangle-\langle\phi,\nu_s\rangle=\int_s^t G[\bar\mu_\ell](K_*\phi)\,d\ell.
\]
In particular, $t\mapsto\langle\phi,\nu_t\rangle$ is locally absolutely continuous. Since the integrand belongs to $L^1_{\mathrm{loc}}$ (Lemma~\ref{lem:TRJ-scalar-AC}), the fundamental theorem of calculus for absolutely continuous functions gives
\[
\frac{d}{dt}\langle\phi,\nu_t\rangle=G[\bar\mu_t](K_*\phi)\qquad\text{for a.e. }t\ge 0.
\qedhere
\]
\end{proof}

Corollary~\ref{cor:induced-S} answers a natural question. If a probability-valued internal population evolves according to the general evolution equation~\eqref{eq:general-evolution} on state space $\X$, what PDE does the distribution of \emph{evaluated candidates} satisfy on search space $\S$?

The key object is the \emph{kernel lift} $(K_*\phi)(x):=\int_\S\phi(y)\,K(x,dy)$, which converts a search-space observable $\phi$ into a state-space function by averaging over the sampling distribution at each state $x$. For instance, if $\phi(y)=\|y-y_*\|^2$ measures quadratic distance to the optimum and $K(x,\cdot)=\mathcal{N}(m(x),\sigma(x)^2 C(x))$ is the CMA-ES sampling kernel, then $K_*\phi(x)=\|m(x)-y_*\|^2+\sigma(x)^2\Tr(C(x))$, i.e.\ the expected squared error of a candidate drawn from the current search distribution.

The corollary says that $\frac{d}{dt}\langle\phi,\nu_t\rangle = G[\bar\mu_t](K_*\phi)$: the rate of change of any search-space statistic is obtained by applying the \emph{state-space generator} to the lifted version of that statistic. This means we never need to derive a separate PDE on $\S$; instead, we can analyze everything through the state-space TRJ equation and ``read off'' the search-space consequences through $K_*$. This factorization is what makes the Lyapunov analysis in Section~\ref{sec:lyapunov} simultaneously applicable to nonparametric methods (where $K=\delta$ and the lift is trivial) and parametric methods (where the lift encodes the nontrivial map from distribution parameters to candidate distributions).

\subsection{Proof of Theorem~\ref{thm:mean-field-convergence}}\label{app:lyapunov-testing}

\subsubsection*{Roadmap.} The proof proceeds in five stages. First, the \emph{search-space compatibility} lemma (Section~\ref{app:lyap-compatibility}) reduces search-space integrals to state-space expectations, providing the transfer mechanism from $\mathcal V_\Upsilon$ to $\mathcal E_\Psi$. Second, the \emph{operator-wise closure} proposition (Section~\ref{app:lyap-closure}) packages the modular verification of the closed dissipation hypothesis. Third, a \emph{testing lemma} (Section~\ref{app:lyap-testing}) handles the technical issue that the Lyapunov function $\Upsilon$ is unbounded while the TRJ equation is formulated against bounded test functions; a truncation-and-limit argument extends the $\mathcal D$-weak identity to polynomial-growth $\Upsilon$. Fourth, combining the testing identity with the closed dissipation hypothesis and Gr\"onwall's lemma yields \emph{Lyapunov decay} for $\mathcal V_\Upsilon$ (Section~\ref{app:lyap-decay}). Finally, the three search-space modes (a)--(c) follow from \emph{basin conversion} and Markov's inequality applied to $\nu_t$ (Section~\ref{app:lyap-modes}).

\subsubsection{Search-space compatibility}\label{app:lyap-compatibility}

The following lemma formalizes the connection between the state-space Lyapunov functional and the search-space error functional. The key identity $\mathcal E_\Psi(\bar\mu)=\int_\X\ell_\Psi(x)\,\bar\mu(dx)$ reformulates a search-space integral as a state-space expectation, after which the Lyapunov decay can be transferred via the pointwise bound $\ell_\Psi\le C_K\,\Upsilon$.

\begin{lemma}[Search-space compatibility]\label{lem:search-space-compatibility}
Let $\bar\mu\in\P_q(\X)$, let $\nu:=\Pi[\bar\mu]\in\mathscr P(\S)$, and let $\Psi\colon\S\to[0,\infty]$ be measurable with $\Psi\in L^1(\nu)$. Then
\begin{equation}\label{eq:app-compat-identity}
\mathcal E_\Psi(\bar\mu)=\int_\S \Psi(y)\,\nu(dy)=\int_\X \ell_\Psi(x)\,\bar\mu(dx).
\end{equation}
Moreover, the following statements hold:
\begin{enumerate}[label=\textup{(\roman*)},leftmargin=*,itemsep=2pt]
\item If $f_*>-\infty$ and $\Psi(y)=f(y)-f_*$, then $\mathcal E_\Psi(\bar\mu)=\int_\S f(y)\,\nu(dy)-f_*$.
\item If $\S_*=\{y_*\}$, $\Psi(y)=\|y-y_*\|^2$, and $\mathcal E_\Psi(\bar\mu)<\infty$, then $\mathcal E_\Psi(\bar\mu)=W_2^2(\nu,\delta_{y_*})$.
\item If, for some $\varepsilon>0$, there exists $c_\varepsilon>0$ such that $\Psi(y)\ge c_\varepsilon$ for all $y\in\S\setminus\S_*^\varepsilon$, then
\[
1-\nu(\S_*^\varepsilon)=\nu(\S\setminus\S_*^\varepsilon)\le c_\varepsilon^{-1}\mathcal E_\Psi(\bar\mu).
\]
In particular, if this holds for every $\varepsilon>0$ and $\mathcal E_\Psi(\bar\mu_t)\to 0$, then the induced search laws achieve $\varepsilon$-concentration.
\end{enumerate}
Consequently, if $\ell_\Psi(x)\le C_K\,\Upsilon(x)$ for all $x\in\X$, then $\mathcal E_\Psi(\bar\mu)\le C_K\,\mathcal V_\Upsilon(\bar\mu)$ for every $\bar\mu\in\P_q(\X)$.
\end{lemma}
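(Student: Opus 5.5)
The plan is to prove the identity \eqref{eq:app-compat-identity} first and then derive (i)--(iii) and the final consequence from it.

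\emph{The identity.} For nonnegative measurable $\Psi$, apply Tonelli's theorem to the definition $\Pi[\bar\mu](A)=\int_\X K(x,A)\,\bar\mu(dx)$. Concretely:
\begin{itemize}
\item For $\Psi=\mathbf 1_A$, the identity $\int_\S \Psi\,d\nu=\int_\X \ell_\Psi\,d\bar\mu$ is exactly the definition \eqref{eq:search-law}.
\item By linearity it extends to nonnegative simple functions.
\item By monotone convergence it extends to all measurable $\Psi\colon\S\to[0,\infty]$. Here one applies monotone convergence twice: inside $\ell_\Psi(x)=\int\Psi\,dK(x,\cdot)$ for each fixed $x$, and then in the outer $\bar\mu$-integral.
\end{itemize}
Measurability of $x\mapsto\ell_\Psi(x)$ comes from the same approximation, since $x\mapsto K(x,A)$ is measurable. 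The hypothesis $\Psi\in L^1(\nu)$ is only needed to ensure the common value is finite. Since $\Psi\ge0$, no integrability is needed for the equality itself.

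\emph{Item (i).} With $\Psi=f-f_*\ge0$ and $f_*$ finite, $\int(f-f_*)\,d\nu=\int f\,d\nu-f_*$ because $\nu$ is a probability measure. This holds in $[0,\infty]$, and finiteness of $\int f\,d\nu$ follows from $\Psi\in L^1(\nu)$.

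\emph{Item (ii).} The only coupling between $\nu$ and $\delta_{y_*}$ is the product $\nu\otimes\delta_{y_*}$. Hence $W_2^2(\nu,\delta_{y_*})=\int\|y-y_*\|^2\,\nu(dy)=\mathcal E_\Psi(\bar\mu)$, and finiteness shows $\nu\in\P_2$, so that $W_2$ is defined in the usual sense. Uniqueness of the minimizer is not actually used in this computation; it is only part of the setting of (MF3).

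\emph{Item (iii).} This is Markov's inequality. Since $\Psi\ge c_\varepsilon\mathbf 1_{\S\setminus\S_*^\varepsilon}$ pointwise, integrating against $\nu$ gives $c_\varepsilon\,\nu(\S\setminus\S_*^\varepsilon)\le\mathcal E_\Psi(\bar\mu)$. Then $1-\nu(\S_*^\varepsilon)=\nu(\S\setminus\S_*^\varepsilon)$ because $\nu$ is a probability measure on $\S$. The ``in particular'' clause follows by applying this bound along $t$ and letting $\mathcal E_\Psi(\bar\mu_t)\to0$ for each fixed $\varepsilon$.

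\emph{Final consequence.} Integrate the pointwise bound $\ell_\Psi\le C_K\Upsilon$ against $\bar\mu$ and invoke \eqref{eq:app-compat-identity}; both sides are nonnegative, so this is valid in $[0,\infty]$. For this step the integrability hypothesis $\Psi\in L^1(\nu)$ is not needed, because the identity already holds for nonnegative $\Psi$.

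The only mildly delicate point is the measure-theoretic justification of the identity: measurability of $\ell_\Psi$ and allowing $\Psi$ to take the value $+\infty$. The monotone-class / simple-function approximation above handles both. All other steps are one-line consequences.
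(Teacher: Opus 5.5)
Your proposal is correct and follows the paper's proof essentially step for step. You obtain the identity via Tonelli for the kernel, which you justify through the simple-function and monotone-convergence argument rather than citing Tonelli directly. Items (ii) and (iii) use the product coupling and a Markov-type bound, and the final consequence integrates the pointwise domination, exactly as the paper does. Your side remarks are also accurate: neither $\Psi\in L^1(\nu)$ nor uniqueness of $y_*$ is actually needed in these particular steps.
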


\begin{proof}
By definition of $\Pi[\bar\mu]$ and Tonelli's theorem,
\[
\mathcal E_\Psi(\bar\mu)=\int_\S \Psi(y)\,\Pi[\bar\mu](dy)=\int_\X\!\!\int_\S\Psi(y)\,K(x,dy)\,\bar\mu(dx)=\int_\X\ell_\Psi(x)\,\bar\mu(dx),
\]
which establishes~\eqref{eq:app-compat-identity}, since $\nu=\Pi[\bar\mu]$ also gives $\mathcal E_\Psi(\bar\mu)=\int_\S\Psi\,d\nu$.

\emph{Part (i)} is immediate: $\mathcal E_\Psi(\bar\mu)=\int_\S(f(y)-f_*)\,\nu(dy)=\int_\S f(y)\,\nu(dy)-f_*$.

\emph{Part (ii):} the unique coupling in $\Gamma(\nu,\delta_{y_*})$ is $\nu\otimes\delta_{y_*}$, hence
\[
W_2^2(\nu,\delta_{y_*})=\int_{\S\times\S}\|y-z\|^2\,(\nu\otimes\delta_{y_*})(dy,dz)=\int_\S\|y-y_*\|^2\,\nu(dy)=\mathcal E_\Psi(\bar\mu).
\]

\emph{Part (iii):}
\[
\mathcal E_\Psi(\bar\mu)=\int_\S\Psi(y)\,\nu(dy)\ge\int_{\S\setminus\S_*^\varepsilon}\Psi(y)\,\nu(dy)\ge c_\varepsilon\,\nu(\S\setminus\S_*^\varepsilon),
\]
which gives the claim.

\emph{The Lyapunov-domination corollary:} when $\ell_\Psi\le C_K\,\Upsilon$, integrating against $\bar\mu$ in~\eqref{eq:app-compat-identity} yields $\mathcal E_\Psi(\bar\mu)=\int_\X\ell_\Psi\,d\bar\mu\le C_K\int_\X\Upsilon\,d\bar\mu=C_K\,\mathcal V_\Upsilon(\bar\mu)$.
\end{proof}

\subsubsection{Operator-wise closure}\label{app:lyap-closure}

We prove Proposition~\ref{prop:operatorwise}, stated in Section~\ref{sec:lyapunov}, which formalizes the modular verification strategy that is central to the practical applicability of the framework.

\begin{proof}[Proof of Proposition~\ref{prop:operatorwise}]
Since $G[\bar\mu]=G_M[\bar\mu]+G_S[\bar\mu]+G_R[\bar\mu]$ by Theorem~\ref{thm:operator-composition}, we add the three operator-wise bounds:
\begin{align*}
G[\bar\mu](\Upsilon)&=G_M[\bar\mu](\Upsilon)+G_S[\bar\mu](\Upsilon)+G_R[\bar\mu](\Upsilon)\\
&\le (-\lambda_M\mathcal{V}_\Upsilon+c_M\mathfrak{b}_\Upsilon)+(-\lambda_S\mathcal{V}_\Upsilon)+(-\lambda_R\mathcal{V}_\Upsilon)\\
&=-(\lambda_M+\lambda_S+\lambda_R)\mathcal{V}_\Upsilon+c_M\mathfrak{b}_\Upsilon.
\end{align*}
Using $\mathfrak{b}_\Upsilon\le\kappa_\Upsilon\mathcal{V}_\Upsilon$, $G[\bar\mu](\Upsilon)\le -(\lambda_M+\lambda_S+\lambda_R-c_M\kappa_\Upsilon)\mathcal{V}_\Upsilon=-\lambda\mathcal{V}_\Upsilon$.
\end{proof}

\subsubsection{Testing the TRJ equation against unbounded Lyapunov functions}\label{app:lyap-testing}

The main technical challenge is that the Lyapunov function $\Upsilon$ may be unbounded while the TRJ equation is formulated for bounded test functions. The following lemma resolves this via truncation.

\begin{lemma}[Testing the balanced TRJ equation with polynomial-growth Lyapunov functions]\label{lem:lyapunov-testing}
For $k\in\{0,1,2,3\}$, and $f \in C^3(\X)$, write $\|D^0 f\|_0:=|f|$, $\|D^1 f\|_1:=\|Df\|$, $\|D^2 f\|_2:=\|D^2f\|_F$, $\|D^3 f\|_3:=\|D^3f\|_{\rm op}$. Under the hypotheses of Theorem~\ref{thm:mean-field-convergence}, suppose $\Upsilon\colon\X\to[0,\infty)$ has polynomial growth of order $q$, in the sense that $\Upsilon\in C^3(\X)$, $\|D^k\Upsilon(x)\|_k \le C_k(1+\|x\|^{(q-k)_+})$ for all $x \in \X$, $k=0,1,2,3$. Then $t\mapsto\mathcal{V}_\Upsilon(\bar\mu_t)$ is absolutely continuous and $\frac{d}{dt}\mathcal{V}_\Upsilon(\bar\mu_t)=G[\bar\mu_t](\Upsilon)$ for a.e.\ $t\ge0$.
\end{lemma}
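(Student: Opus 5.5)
We prove the lemma by truncation. Approximate $\Upsilon$ by bounded test functions in $\mathcal D=C_b^3(\X)$, apply the $\mathcal D$-weak identity of Definition~\ref{def:D-weak-balanced-TRJ} to each approximant, and pass to the limit with dominated convergence in both the state integral and the time integral.

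Fix a smooth cutoff $\chi\in C_c^\infty(\R^{d_x})$ with $0\le\chi\le1$, $\chi\equiv1$ on $B_1(0)$ and $\chi\equiv0$ outside $B_2(0)$, and set $\chi_R(x):=\chi(x/R)$ and $\Upsilon_R:=\chi_R\Upsilon$. Then $\Upsilon_R\in C_b^3(\X)=\mathcal D$. By the Leibniz rule, $\|D^j\chi_R\|\lesssim R^{-j}\mathbf 1_{\{R\le\|x\|\le2R\}}$, and on that annulus $R^{-j}\lesssim(1+\|x\|)^{-j}$. Combining this with the growth hypothesis $\|D^k\Upsilon\|_k\le C_k(1+\|x\|^{(q-k)_+})$ gives
\[
\|D^k\Upsilon_R(x)\|_k\le C'_k\,(1+\|x\|^{(q-k)_+}),\qquad k=0,1,2,3,
\]
uniformly in $R\ge1$. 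Moreover $D^k\Upsilon_R\to D^k\Upsilon$ pointwise as $R\to\infty$. The definition of a $\mathcal D$-weak solution applied to $\Upsilon_R$ gives, for $0\le s\le t\le T$,
\[
\langle\Upsilon_R,\bar\mu_t\rangle-\langle\Upsilon_R,\bar\mu_s\rangle=\int_s^t G[\bar\mu_\ell](\Upsilon_R)\,d\ell .
\]

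The left-hand side converges to $\mathcal V_\Upsilon(\bar\mu_t)-\mathcal V_\Upsilon(\bar\mu_s)$ by dominated convergence. The dominating function is $C'_0(1+\|x\|^q)$, which is integrable uniformly in time by the assumed uniform $q$-th moment control.

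For the right-hand side, the aim is a bound $|G[\bar\mu](\Upsilon_R)|\le C(r)$ uniform in $R$ and over the moment sublevel containing the trajectory, together with pointwise convergence $G[\bar\mu](\Upsilon_R)\to G[\bar\mu](\Upsilon)$ for each fixed $\bar\mu$. We treat the three terms separately.

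\emph{Mutation.} Assumption~\ref{ass:mutation-diffusion}(i) gives $|\nabla\Upsilon_R\cdot b_M|\lesssim(1+\|x\|^{q-1})(1+\|x\|)$ and $|\Tr(a_MD^2\Upsilon_R)|\lesssim(1+\|x\|^{q-2})(1+\|x\|)^2$. Both are $O(1+\|x\|^q)$, so they are integrable uniformly in $R$.

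\emph{Selection.} Assumption~\ref{ass:bounded-pressure-app}(i) gives $|(\Phi-\bar\Phi)\Upsilon_R|\le 2C_\Phi(r)\,C'_0(1+\|x\|^q)$.

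\emph{Recombination.} We have $\langle\Upsilon_R,R^{\rm bal}[\bar\mu]\rangle\le C'_0\iint\E_\xi w_q(b(x,y;\xi))\,\overline{\Gamma^{\bar\mu}}(dx,dy)$. By Assumption~\ref{ass:recombination-section-ass}(iii) and the marginal condition in (i), this is at most $2C'_0C_b^{(q)}\int w_q\,d\bar\mu$.

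With these dominations, dominated convergence in $x$ (and in the pair variables and $\xi$ for the jump term) gives the pointwise limit. Dominated convergence in $\ell\in[s,t]$ then gives
\[
\mathcal V_\Upsilon(\bar\mu_t)-\mathcal V_\Upsilon(\bar\mu_s)=\int_s^tG[\bar\mu_\ell](\Upsilon)\,d\ell ,
\]
with a bounded integrand. Hence $t\mapsto\mathcal V_\Upsilon(\bar\mu_t)$ is Lipschitz on $[0,T]$, and by Lebesgue differentiation its derivative equals $G[\bar\mu_t](\Upsilon)$ for a.e.\ $t$. Measurability of $\ell\mapsto G[\bar\mu_\ell](\Upsilon)$ follows because it is the pointwise limit of the measurable maps $\ell\mapsto G[\bar\mu_\ell](\Upsilon_R)$.

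The main obstacle is making the domination exactly tight at order $q$. Two points need care. First, the growth exponents $(q-k)_+$ must combine with linear growth of $b_M$ and quadratic growth of $a_M$ to give precisely $\|x\|^q$, and nothing more, because only $q$-th moments are controlled. This is also why the Leibniz terms involving derivatives of $\chi_R$ must be shown to obey the same $(q-k)_+$ envelope. Second, we need uniformity of the coefficient constants $C(r)$ and $C_\Phi(r)$ along the whole trajectory. This holds because the hypothesis of uniformly controlled $q$-th moments places every $\bar\mu_t$ in a single sublevel $\M_{q,+}^{r}$, where these constants are fixed.
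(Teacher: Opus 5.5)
Your proposal is correct and follows essentially the same route as the paper: a smooth radial cutoff $\Upsilon_R=\chi_R\Upsilon\in\mathcal D$, Leibniz and annulus estimates giving $R$-uniform bounds $\|D^k\Upsilon_R\|_k\lesssim 1+\|x\|^{(q-k)_+}$, term-by-term domination of the mutation, selection and recombination contributions by $C\int w_q\,d\bar\mu_\ell$ (via linear growth, bounded pressure and offspring $q$-moment growth with the marginal condition), and dominated convergence on both sides of the truncated $\mathcal D$-weak identity. Your remark that the limiting integrand is bounded by $C\sup_\ell\int w_q\,d\bar\mu_\ell$, so that $t\mapsto\mathcal V_\Upsilon(\bar\mu_t)$ is in fact Lipschitz, is the same as the paper's Step~5 bound $C_*M_q^*$.
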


\begin{proof}
The TRJ equation~\eqref{eq:TRJ} is a $\mathcal{D}$-weak identity (Definition~\ref{def:D-weak-balanced-TRJ}), valid when tested against $\varphi\in\mathcal{D}=C_b^3(\X)$; the Lyapunov function $\Upsilon$ may grow as $\|x\|^q$ and is therefore not in $\mathcal{D}$. We use a truncation-and-limit argument that follows the same three-step scheme as in the main manuscript: (i) multiply $\Upsilon$ by a smooth spatial cutoff $\vartheta_{\bar r}$ to produce a bounded test function $\Upsilon_{\bar r}\in\mathcal D$ for which the weak formulation applies; (ii) derive uniform-in-$\bar r$ domination bounds on each generator term using the polynomial growth of $\Upsilon$ and the moment assumption; (iii) send $\bar r\to\infty$ and apply dominated convergence.

For $k\in\{0,1,2,3\}$, define $g_k(x):=1+\|x\|^{(q-k)_+}$. Polynomial-growth admissibility of $\Upsilon$ of order $q$ then reads
\begin{equation}\label{eq:app-ups-growth-k}
\|D^k\Upsilon(x)\|_k\le C_\Upsilon\,g_k(x),\qquad x\in\X,\ k=0,1,2,3,
\end{equation}
where $C_\Upsilon:=\max(C_0,C_1,C_2,C_3)$.

\textit{Step 1: construction of the truncated sequence.} Choose $\vartheta\in C_c^\infty(\X)$ with $0\le\vartheta\le 1$, $\vartheta\equiv 1$ on $B_1(0)$ and $\vartheta\equiv 0$ on $\X\setminus B_2(0)$. For $\bar r\ge 1$, set $\vartheta_{\bar r}(x):=\vartheta(x/\bar r)$ and $\Upsilon_{\bar r}(x):=\vartheta_{\bar r}(x)\,\Upsilon(x)$. Then $\Upsilon_{\bar r}\in C_b^3(\X)=\mathcal D$, $\Upsilon_{\bar r}=\Upsilon$ on $B_{\bar r}(0)$, and $\Upsilon_{\bar r}=0$ on $\X\setminus B_{2\bar r}(0)$. For $m\in\{0,1,2,3\}$, set $M_m:=\sup_{x\in\X}\|D^m\vartheta(x)\|_m<\infty$. By scaling, $\|D^m\vartheta_{\bar r}(x)\|_m\le M_m\,\bar r^{-m}$. Moreover, for $m\ge 1$, $\operatorname{supp}D^m\vartheta_{\bar r}\subset A_{\bar r}:=\{x\in\X:\bar r\le\|x\|\le 2\bar r\}$.

Set $\kappa_0:=1$ and $\kappa_m:=1+2^m$ for $m=1,2,3$. Then for every $1\le m\le k\le 3$,
\begin{equation}\label{eq:app-annulus-bound}
\bar r^{-m}\,g_{k-m}(x)\,\mathbf 1_{A_{\bar r}}(x)\le \kappa_m\,g_k(x).
\end{equation}
Indeed, if $a:=q-k\ge 0$, then on $A_{\bar r}$ we have $\bar r^{-m}\|x\|^{a+m}=(\|x\|/\bar r)^m\|x\|^a\le 2^m\|x\|^a$; if $a<0$, then $a\in[-1,0)$ and $\|x\|\ge 1$ on $A_{\bar r}$, so $\bar r^{-m}\|x\|^{a+m}\le 2^m$.

\textit{Step 2: uniform derivative estimates.} For scalar $f,g\in C^3(\X)$, the product rules for $D,D^2,D^3$ yield, for $k=0,1,2,3$,
\begin{equation}\label{eq:app-product-derivative}
\|D^k(fg)(x)\|_k\le\sum_{m=0}^k\binom{k}{m}\|D^m f(x)\|_m\,\|D^{k-m}g(x)\|_{k-m},\qquad x\in\X,
\end{equation}
using $D^2(fg)=fD^2g+gD^2f+Df\otimes Dg+Dg\otimes Df$ for $k=2$, the trilinear Leibniz formula for $k=3$, and $\|D^2 h\|_{\rm op}\le\|D^2 h\|_F$. Applying~\eqref{eq:app-product-derivative} with $f=\vartheta_{\bar r}$, $g=\Upsilon$, and combining \eqref{eq:app-ups-growth-k} with \eqref{eq:app-annulus-bound},
\[
\|D^k\Upsilon_{\bar r}(x)\|_k\le C_\Upsilon\Bigl(M_0+\sum_{m=1}^k\binom{k}{m}M_m\kappa_m\Bigr)\,g_k(x)=:\widetilde C_k\,g_k(x),
\]
uniformly in $\bar r\ge 1$ and $x\in\X$, for $k=0,1,2,3$. In particular, $\|D^0\Upsilon_{\bar r}(x)\|_0\le\widetilde C_0\,w_q(x)$ uniformly, and $\Upsilon_{\bar r}\to\Upsilon$, $D\Upsilon_{\bar r}\to D\Upsilon$, $D^2\Upsilon_{\bar r}\to D^2\Upsilon$ pointwise as $\bar r\to\infty$.

\textit{Step 3: integrated identity.} Fix $0\le s\le t$. Since $\Upsilon_{\bar r}\in\mathcal D$, the $\mathcal D$-weak formulation~\eqref{eq:TRJ} gives
\begin{equation}\label{eq:app-truncated-identity}
\int_\X\Upsilon_{\bar r}\,d\bar\mu_t-\int_\X\Upsilon_{\bar r}\,d\bar\mu_s=\int_s^t\Bigl(G_M[\bar\mu_u](\Upsilon_{\bar r})+G_S^{\rm bal}[\bar\mu_u](\Upsilon_{\bar r})+G_R^{\rm bal}[\bar\mu_u](\Upsilon_{\bar r})\Bigr)du.
\end{equation}

\textit{Step 4: uniform domination of generator terms.} Let $M_q^*:=\sup_{u\ge 0}\int_\X w_q\,d\bar\mu_u<\infty$ by the assumed uniform moment control.

\emph{Mutation.} Since $(\bar\mu_u)_{u\ge 0}$ stays in a fixed $q$-moment ball and $q\ge 2$, Assumption~\ref{ass:mutation-diffusion} yields $L_M<\infty$ such that
\[
\|b_M(x;\bar\mu_u)\|+\|B_M(x;\bar\mu_u)\|_F\le L_M(1+\|x\|),\qquad \|a_M(x;\bar\mu_u)\|_F\le 2L_M^2(1+\|x\|^2),
\]
where $a_M=B_M\Sigma B_M^\top$ and $\|\Sigma\|_{\rm op}$ is absorbed into $L_M$. Using $g_1(x)(1+\|x\|)\le 4w_q(x)$ and $g_2(x)(1+\|x\|^2)\le 4w_q(x)$,
\[
|G_M[\bar\mu_u](\Upsilon_{\bar r})|\le 4\bigl(\widetilde C_1 L_M+\widetilde C_2 L_M^2\bigr)\int_\X w_q\,d\bar\mu_u=:C_M^*\int_\X w_q\,d\bar\mu_u.
\]
\begin{sloppypar}
Pointwise convergence of $D\Upsilon_{\bar r}$, $D^2\Upsilon_{\bar r}$ and dominated convergence give $G_M[\bar\mu_u](\Upsilon_{\bar r})\to G_M[\bar\mu_u](\Upsilon)$ for every $u\ge 0$.
\end{sloppypar}

\emph{Selection.} Bounded pressure (Assumption~\ref{ass:bounded-pressure-app}(i)) on the same moment ball yields $C_\Phi^*<\infty$ with $|\Phi(x;\bar\mu_u)|+|\bar\Phi(\bar\mu_u)|\le C_\Phi^*$. Therefore
\[
|G_S^{\rm bal}[\bar\mu_u](\Upsilon_{\bar r})|\le 2C_\Phi^*\int_\X\|D^0\Upsilon_{\bar r}(x)\|_0\,\bar\mu_u(dx)\le 2C_\Phi^*\widetilde C_0\int_\X w_q\,d\bar\mu_u=:C_S^*\int_\X w_q\,d\bar\mu_u,
\]
and $G_S^{\rm bal}[\bar\mu_u](\Upsilon_{\bar r})\to G_S^{\rm bal}[\bar\mu_u](\Upsilon)$ by dominated convergence.

\emph{Recombination.} Write $R_u^{\rm bal}:=R^{\rm bal}[\bar\mu_u]$. By the $q$-moment growth of the offspring law (Assumption~\ref{ass:recombination-section-ass}(iii)),
\[
\int_\X w_q(z)\,R_u^{\rm bal}(dz)\le 2C_b^{(q)}\int_\X w_q\,d\bar\mu_u.
\]
Hence
\[
|G_R^{\rm bal}[\bar\mu_u](\Upsilon_{\bar r})|\le\widetilde C_0\int_\X w_q\,dR_u^{\rm bal}+\widetilde C_0\int_\X w_q\,d\bar\mu_u\le\widetilde C_0(2C_b^{(q)}+1)\int_\X w_q\,d\bar\mu_u=:C_R^*\int_\X w_q\,d\bar\mu_u,
\]
and dominated convergence under both $R_u^{\rm bal}$ and $\bar\mu_u$ gives $G_R^{\rm bal}[\bar\mu_u](\Upsilon_{\bar r})\to G_R^{\rm bal}[\bar\mu_u](\Upsilon)$.

\textit{Step 5: passage to the limit.} Set $C_*:=C_M^*+C_S^*+C_R^*$. Then
\[
\bigl|G_M[\bar\mu_u](\Upsilon_{\bar r})+G_S^{\rm bal}[\bar\mu_u](\Upsilon_{\bar r})+G_R^{\rm bal}[\bar\mu_u](\Upsilon_{\bar r})\bigr|\le C_*\int_\X w_q\,d\bar\mu_u\le C_* M_q^*
\]
for all $u\ge 0$ and $\bar r\ge 1$. Combined with $\|D^0\Upsilon_{\bar r}\|_0\le\widetilde C_0 w_q$, dominated convergence on both sides of~\eqref{eq:app-truncated-identity} yields
\[
\int_\X\Upsilon\,d\bar\mu_t-\int_\X\Upsilon\,d\bar\mu_s=\int_s^t\Bigl(G_M[\bar\mu_u](\Upsilon)+G_S^{\rm bal}[\bar\mu_u](\Upsilon)+G_R^{\rm bal}[\bar\mu_u](\Upsilon)\Bigr)du,\qquad 0\le s\le t.
\]
Therefore $t\mapsto\mathcal V_\Upsilon(\bar\mu_t):=\int_\X\Upsilon\,d\bar\mu_t$ is absolutely continuous on every compact subinterval of $[0,\infty)$, and for a.e.\ $t\ge 0$,
\[
\frac{d}{dt}\mathcal V_\Upsilon(\bar\mu_t)=G_M[\bar\mu_t](\Upsilon)+G_S^{\rm bal}[\bar\mu_t](\Upsilon)+G_R^{\rm bal}[\bar\mu_t](\Upsilon)=G[\bar\mu_t](\Upsilon).
\qedhere
\]
\end{proof}

\subsubsection{Lyapunov decay}\label{app:lyap-decay}

\begin{proof}[Proof of Theorem~\ref{thm:mean-field-convergence}, decay~\eqref{eq:V-decay}]
By Lemma~\ref{lem:lyapunov-testing}, $\frac{d}{dt}\mathcal{V}_\Upsilon(\bar\mu_t)=G[\bar\mu_t](\Upsilon)$ for a.e.\ $t\ge0$. The closed dissipation inequality~\eqref{eq:closed-lyapunov} gives
$\frac{d}{dt}\mathcal{V}_\Upsilon(\bar\mu_t)=G[\bar\mu_t](\Upsilon)\le -\lambda\mathcal{V}_\Upsilon(\bar\mu_t)$ for a.e.\ $t\ge0$.
This is a scalar differential inequality. By Gr\"onwall's lemma, $\mathcal{V}_\Upsilon(\bar\mu_t)\le e^{-\lambda t}\mathcal{V}_\Upsilon(\bar\mu_0)$ for all $t\ge0$, which is the decay~\eqref{eq:V-decay}. Applying Lemma~\ref{lem:search-space-compatibility} to the objective-gap and quadratic gauges gives the auxiliary transfer estimates
\begin{equation}\label{eq:app-transfer-estimates}
\begin{aligned}
\mathcal{E}_{\Psi_f}(\bar\mu_t)&\le C_{\mathrm{obj}}\,\mathcal{V}_\Upsilon(\bar\mu_t)\le C_{\mathrm{obj}}\,e^{-\lambda t}\mathcal{V}_\Upsilon(\bar\mu_0),\\
\mathcal{E}_{\Psi_*}(\bar\mu_t)&\le C_{\mathrm{quad}}\,\mathcal{V}_\Upsilon(\bar\mu_t)\le C_{\mathrm{quad}}\,e^{-\lambda t}\mathcal{V}_\Upsilon(\bar\mu_0),
\end{aligned}
\end{equation}
which are used below to derive the three convergence modes.
\end{proof}

\subsubsection{Convergence modes: basin conversion and $\varepsilon$-concentration}\label{app:lyap-modes}

\begin{proof}[Proof of Theorem~\ref{thm:mean-field-convergence}\ref{item:mf1} ($\varepsilon$-concentration)]
The argument follows the basin-conversion route of main: we first show $B_{r_\varepsilon}(y_*)\subset\S_*^\varepsilon$, then reduce $1-p_\varepsilon(t)$ to a Markov-inequality control on the second moment of $\nu_t$ around $y_*$, and finally apply the transfer estimate via Lemma~\ref{lem:search-space-compatibility}(ii).

\emph{Step 1: basin $\subset$ $\varepsilon$-sublevel set.} Recall $r_\varepsilon=\min\{R_0,\varepsilon/L_0\}$ with $L_0=L_f(1+2\|y_*\|+2R_0)^s$. For $y\in B_{r_\varepsilon}(y_*)$, $\|y-y_*\|<r_\varepsilon\le R_0$, so $y\in B_{R_0}(y_*)$. Assumption~\ref{assumption:landscape}(i) and $f(y_*)=f_*$ give
\[
f(y)-f_*\le L_f(1+\|y\|+\|y_*\|)^s\|y-y_*\|.
\]
Since $\|y\|\le\|y_*\|+\|y-y_*\|<\|y_*\|+R_0$, we have $(1+\|y\|+\|y_*\|)^s\le(1+2\|y_*\|+2R_0)^s=L_0/L_f$, hence
\[
f(y)-f_*\le L_0\|y-y_*\|<L_0\,r_\varepsilon\le\varepsilon,
\]
i.e., $y\in\S_*^\varepsilon$. Therefore $\S\setminus\S_*^\varepsilon\subset\S\setminus B_{r_\varepsilon}(y_*)$.

\emph{Step 2: Markov inequality on $\nu_t$.} We have
\[
1-p_\varepsilon(t)=\nu_t(\S\setminus\S_*^\varepsilon)\le\nu_t(\S\setminus B_{r_\varepsilon}(y_*))=\nu_t\bigl(\{y:\|y-y_*\|^2\ge r_\varepsilon^2\}\bigr).
\]
Applying Markov's inequality to the nonnegative random variable $\|Y-y_*\|^2$ under $Y\sim\nu_t$,
\[
\nu_t\bigl(\{y:\|y-y_*\|^2\ge r_\varepsilon^2\}\bigr)\le r_\varepsilon^{-2}\int_\S\|y-y_*\|^2\,\nu_t(dy)=r_\varepsilon^{-2}\,\mathcal E_{\Psi_*}(\bar\mu_t),
\]
where the last equality uses the identity~\eqref{eq:app-compat-identity} of Lemma~\ref{lem:search-space-compatibility}.

\emph{Step 3: transfer estimate.} The transfer estimate $\mathcal E_{\Psi_*}(\bar\mu_t)\le C_{\mathrm{quad}}\,e^{-\lambda t}\,\mathcal V_\Upsilon(\bar\mu_0)$ established in~\eqref{eq:app-transfer-estimates} (via Lemma~\ref{lem:search-space-compatibility} applied to $\Psi_*$) yields
\[
1-p_\varepsilon(t)\le\frac{C_{\mathrm{quad}}}{r_\varepsilon^2}\,e^{-\lambda t}\,\mathcal V_\Upsilon(\bar\mu_0).\qedhere
\]
\end{proof}

\begin{proof}[Proof of Theorem~\ref{thm:mean-field-convergence}\ref{item:mf2} (objective-gap decay)]
Apply Lemma~\ref{lem:search-space-compatibility}(i) with $\Psi(y)=f(y)-f_*$ to get
\[
\int_\S f\,d\nu_t-f_*=\mathcal E_{\Psi_f}(\bar\mu_t).
\]
The transfer estimate $\mathcal E_{\Psi_f}(\bar\mu_t)\le C_{\mathrm{obj}}\,e^{-\lambda t}\,\mathcal V_\Upsilon(\bar\mu_0)$ established in~\eqref{eq:app-transfer-estimates} yields the claimed bound.
\end{proof}

\begin{proof}[Proof of Theorem~\ref{thm:mean-field-convergence}\ref{item:mf3} (consensus)]
When $\S_*=\{y_*\}$ and $\Psi_*(y)=\|y-y_*\|^2$, Lemma~\ref{lem:search-space-compatibility}(ii) gives the identity $\mathcal E_{\Psi_*}(\bar\mu_t)=W_2^2(\nu_t,\delta_{y_*})$. Combining with the transfer estimate $\mathcal E_{\Psi_*}(\bar\mu_t)\le C_{\mathrm{quad}}\,e^{-\lambda t}\,\mathcal V_\Upsilon(\bar\mu_0)$ established in~\eqref{eq:app-transfer-estimates} yields $W_2^2(\nu_t,\delta_{y_*})\le C_{\mathrm{quad}}\,e^{-\lambda t}\,\mathcal V_\Upsilon(\bar\mu_0)$.
\end{proof}

\section{Discussion}\label{sec:discussion}

\subsubsection*{Conclusion.}
We have presented a unified mean-field framework built on operator-splitting
calculus and a modular Lyapunov convergence principle, covering direct and
parametric population-based methods as well as degenerate gradient-based
limits. An algorithm is specified by composing three elementary measure-valued
operators --- mutation, selection, and recombination --- and the generator of the
composite step is the sum of the component pre-generators
(Theorem~\ref{thm:operator-composition}); for the canonical splitting this
produces the transport--reaction--jump equation
(Theorem~\ref{thm:TRJ-decomposition}), a mean-field PDE whose terms retain
their algorithmic meaning. The Lyapunov principle
(Theorem~\ref{thm:mean-field-convergence}) then reduces convergence to a single
closed dissipation inequality that can be assembled operator by operator
(Proposition~\ref{prop:operatorwise}) and yields exponential decay together
with the three search-space convergence modes of Section~\ref{sec:setup}, while
the separation of state space and search space makes the same statement apply
to nonparametric methods and to parametric ones such as CMA-ES. The numerical
experiments of Section~\ref{subsec:lyap-numerics} are consistent with the
predicted geometric decay, and the regularity classes and examples of
Section~\ref{sec:canonical-detail} show that standard algorithmic mechanisms
fall within the scope of the theory.

\subsubsection*{Limitations.}
The results are mean-field and continuous-time: the paper does not prove
finite-particle propagation-of-chaos estimates, finite-time discovery bounds,
or discretization error bounds for concrete implementations. The main
convergence theorem is conditional on the closed Lyapunov inequality
\eqref{eq:closed-lyapunov} and on search-space compatibility; verifying
these hypotheses with explicit constants is algorithm-dependent and is only
illustrated here, not carried out exhaustively for all families in
Table~\ref{tab:ops-full}. The operator calculus also imposes regularity
conditions, including polynomial moment bounds, Lipschitz dependence of
mean-field coefficients, bounded-pressure selection, and Lipschitz
recombination kernels. These assumptions exclude some practical variants,
such as discontinuous replacement rules, unbounded selection pressures, or
biased mating schemes unless they are regularized, absorbed into the
selection operator, or treated by an augmented model. Finally, the numerical
experiments are intended as sanity checks of the predicted Lyapunov decay,
not as a benchmark study of optimizer performance.

\subsubsection*{Outlook.}
Future work will address finite-particle discovery and
evaluation-complexity bounds, sharper propagation-of-chaos estimates,
fully traceable verification of concrete algorithm families with explicit
constants, multimodal extensions, and finite-time tradeoff analysis.

\bibliographystyle{plainnat}
\bibliography{references}

\newpage
\appendix

\section*{Appendix roadmap}
\label{app:roadmap}

The supplementary material is organised into three blocks. The first block
collects notation, background, and extended context
(Appendices~\ref{app:prelim}--\ref{app:related-work}). The
second block contains the technical core of the operator calculus: the
formal definition of pre-generators and the verification of the regularity
conditions for the canonical operators (Appendix~\ref{app:operator-calculus}).
The third block gives the numerical sanity checks used in the paper
(Appendix~\ref{app:numerical-verification}). The proofs of the main results
are collected in Section~\ref{sec:main-proofs} of the main text.

\begin{itemize}[leftmargin=*,itemsep=2pt]
\item \emph{Appendix~\ref{app:prelim} (Notation and preliminaries).}
This appendix fixes the measure-theoretic and metric notation used
throughout the paper: the cones \(\M_{q,+}^{r,m}\), normalized laws
\(\bar\mu\in\P_q(\X)\), the weighted bounded-Lipschitz distance
\(d_{\mathrm{BL},q}\), the population metric
\(d_{q,2}^*=d_{\mathrm{BL},q}+W_2\), the test class
\(\mathcal D=C_b^3(\X)\), and the conventions for weak actions on
\(\mathcal D\).

\item \emph{Appendix~\ref{app:related-work} (Extended related work).}
This appendix gives the expanded literature discussion. It reviews
algorithm-specific convergence theories, mean-field CBO, runtime and drift
analysis, and the operator decomposition of representative families such as
GAs, ES, DE, PSO, CMA-ES, NES, EDAs, diversity-preserving variants, and
gradient-based methods viewed as degenerate or augmented-state cases.

\item \emph{Appendix~\ref{app:operator-calculus} (Operator calculus).}
This appendix contains the technical core of the operator formalism. It gives
the formal definition of pre-generators
(\textsection\ref{app:pregenerators}) and verifies the operator-specific
pre-generators for mutation, selection, and recombination
(\textsection\ref{app:operators}; Propositions~\ref{prop:app-mutation-gen},
\ref{prop:app-selection-gen}, and~\ref{prop:app-recomb-gen}).

\item \emph{Appendix~\ref{app:numerical-verification} (Numerical verification).}
This appendix gives the numerical sanity checks for the Lyapunov principle.
It describes the CBO experiment, the CMA-ES-type parameter-flow experiment,
and the recombinative ES experiment, together with the tracked Lyapunov
quantities and the reproducibility information for the supplementary Python
scripts. These experiments are illustrative checks of the predicted decay
profiles rather than benchmark comparisons or finite-particle convergence
theorems.
\end{itemize}

\noindent\emph{Reading suggestions.}
Readers interested primarily in the operator calculus can read
Appendix~\ref{app:operator-calculus} after skimming
Appendix~\ref{app:prelim}. Readers interested primarily in the convergence
theorem can go directly to Section~\ref{sec:main-proofs}, treating the
operator-calculus appendix as the source of the generator \(G\). Readers
interested in concrete examples can read Appendix~\ref{app:numerical-verification}
together with Theorem~\ref{thm:mean-field-convergence}.

\section{Notation and Preliminaries}\label{app:prelim}

We collect here the notation and background material used throughout the paper, to make the appendix self-contained. The conventions agree with the main manuscript.

\subsubsection*{Measure spaces.}
Throughout, $\X\subseteq\R^{d_x}$ is a Polish subspace (typically $\X=\R^{d_x}$); the search space is $\S\subseteq\R^d$ with $d_x\ge d$ in general. We write $\M(\X)$ for the space of finite signed Borel measures on $(\X,\mathscr{B}(\X))$, $\M_+(\X)$ for nonnegative finite measures, and $\P(\X)$ for probability measures. Since $\X$ is Polish, every $\mu\in\M(\X)$ is Radon. The total variation of $\mu\in\M(\X)$ is $|\mu|:=\mu^++\mu^-$, where $\mu^+,\mu^-$ are the positive and negative parts. For an exponent $q\ge 2$, the weight function is $w_q(x):=1+\|x\|^q$, and the cone of nonnegative measures with finite $q$-th moment and strictly positive total mass is
\[
\M_{q,+}(\X):=\bigl\{\mu\in\M_+(\X):\;\textstyle\int_\X w_q(x)\,\mu(dx)<\infty,\ \mu(\X)>0\bigr\}.
\] Similarly, $\P_q(\X)$ denotes the space of Borel probability measures on $\X$ with finite $q$-th moment, which becomes a metric space when endowed with the $p$-Wasserstein distance for $p\le q$~\citep{ambrosioGradientFlowsMetric2005}. The moment sublevel is $\M_{q,+}^r:=\{\mu\in\M_{q,+}(\X):\int w_q\,d\mu\le r\}$, and, when a uniform lower bound on total mass is required, the mass-restricted sublevel
\[
\M_{q,+}^{r,m}\;:=\;\bigl\{\mu\in\M_{q,+}(\X):\;\mu(\X)\ge m,\;\textstyle\int_\X w_q\,d\mu\le r\bigr\},\qquad m>0.
\]
The normalization is $\bar\mu:=\mu/\mu(\X)\in\P_q(\X)$ whenever $\mu\in\M_{q,+}(\X)$. The pairing of a measurable function $\varphi$ with a measure $\mu$ is $\langle\varphi,\mu\rangle:=\int\varphi\,d\mu$.

\subsubsection*{Test function spaces and derivatives.}
For $k\in\N$, let $C_b^k(\X)$ be the Banach space of $k$-times continuously differentiable functions with bounded derivatives up to order $k$, endowed with the $C^k$-norm $\|\varphi\|_{C_b^k}:=\sum_{|\alpha|\le k}\|\partial^\alpha\varphi\|_\infty$, where $\|\partial^\alpha\varphi\|_\infty:=\sup_{x\in\X}|\partial^\alpha\varphi(x)|$. Unless otherwise stated, the test function space is $\mathcal{D}:=C_b^3(\X)$, and $\mathcal D'$ denotes its topological dual endowed with the weak topology $\sigma(\mathcal D',\mathcal D)$; the operator norm on $\mathcal D'$ is denoted $\|\cdot\|_{\mathcal D'}$. We interpret $D\varphi(x)=\nabla\varphi(x)\in\R^{d_x}$, $D^2\varphi(x)$ as the Hessian in $\R^{d_x\times d_x}$, and $D^3\varphi(x)$ as the symmetric third-order derivative tensor. For vectors $u,v\in\R^{d_x}$, we write $u\otimes v\in\R^{d_x\times d_x}$ for the outer product with entries $(u\otimes v)_{ij}=u_iv_j$; in particular, $\|u\otimes v\|_F=\|u\|\,\|v\|$. For the Hessian we use the Frobenius norm $\|D^2\varphi(x)\|_F$ or the operator norm $\|D^2\varphi(x)\|_{\rm op}:=\sup_{\|u\|=1}\|D^2\varphi(x)u\|$; for third derivatives, $M_3(\varphi):=\max_{|\alpha|=3}\|\partial^\alpha\varphi\|_\infty$, and $\|D^3\varphi(x)\|_{\rm op}$ denotes the trilinear operator norm induced by the Euclidean norm, so that $|D^3\varphi(x)[h,h,h]|\le\|D^3\varphi(x)\|_{\rm op}\,\|h\|^3$. For any tensor field $T(x)$ on $\X$, we use the convention
\[
\|T\|_{\rm op,\infty}\;:=\;\sup_{x\in\X}\|T(x)\|_{\rm op},\qquad \|T\|_{F,\infty}\;:=\;\sup_{x\in\X}\|T(x)\|_{F},
\]
i.e.\ the supremum over $\X$ of the pointwise operator (resp.\ Frobenius) norm. The pointwise bound $\|D^3\varphi(x)\|_{\rm op}\le d_x^{3/2}M_3(\varphi)$ holds on $\X\subseteq\R^{d_x}$.

\subsubsection*{Bounded-Lipschitz norms and the weighted BL metric.}
Let $\mathrm{BL}(\X)\subset C_b(\X)$ denote the space of bounded Lipschitz functions with norm $\|\varphi\|_{\mathrm{BL}}:=\|\varphi\|_\infty+|\varphi|_L$, where $|\varphi|_L:=\sup_{x\ne y}|\varphi(x)-\varphi(y)|/\|x-y\|$. The dual bounded-Lipschitz (Dudley) norm on $\M(\X)$ is $\|\mu\|_{\mathrm{BL}}^*:=\sup\{\langle\varphi,\mu\rangle:\|\varphi\|_{\mathrm{BL}}\le1\}$, which metrizes weak convergence on $\M_+(\X)$ when $\X$ is Polish.

To handle polynomial growth, the weighted BL norm for $q\ge2$ (used in~\eqref{eq:dBLq}) is $\|f\|_{\mathrm{BL},q}:=\|f\|_{0,q}+[f]_{1,q-1}$, where
\[
\|f\|_{0,q}:=\sup_{x\in\X}\frac{|f(x)|}{w_q(x)}, \qquad [f]_{1,q-1}:=\sup_{x\ne y}\frac{|f(x)-f(y)|}{\|x-y\|(1+\|x\|^{q-1}+\|y\|^{q-1})},
\]
so $f$ grows at most polynomially of order $q$ with Lipschitz modulus of order $q-1$. The weighted BL distance~\eqref{eq:dBLq} on $\M_{q,+}(\X)$ is $d_{\mathrm{BL},q}(\mu,\nu):=\sup\{|\langle f,\mu-\nu\rangle|:\|f\|_{\mathrm{BL},q}\le1\}$. On any subset $\mathcal K\subset\M_{q,+}(\X)$ with $\sup_{\mu\in\mathcal K}\int\|x\|^q\,\mu(dx)<\infty$ and $\sup_{\mu\in\mathcal K}\mu(\X)<\infty$, $d_{\mathrm{BL},q}$ metrizes weak convergence together with convergence of the $q$-th moment~\citep{pflug2011approximations}. Since $\varphi\in\mathcal{D}$ is bounded and globally Lipschitz with $\mathrm{Lip}(\varphi)\le\|\nabla\varphi\|_\infty$, we have $\varphi\in\mathrm{BL}_q(\X)$ and
\[
|\langle\varphi,\mu-\nu\rangle|\le\|\varphi\|_{\mathrm{BL},q}\,d_{\mathrm{BL},q}(\mu,\nu),\qquad \|\varphi\|_{\mathrm{BL},q}\le\|\varphi\|_\infty+\|\nabla\varphi\|_\infty.
\]

\subsubsection*{Wasserstein distances.}
For $p\ge 1$ and $\mu,\nu\in\P_p(\X)$, the \emph{$p$-Wasserstein distance} is
\begin{equation}\label{eq:wasserstein-app}
W_p(\mu,\nu):=\inf_{\gamma\in\Gamma(\mu,\nu)}\!\left(\int_{\X\times\X}\|x-y\|^p\,\gamma(dx,dy)\right)^{\!1/p},
\end{equation}
where $\Gamma(\mu,\nu)$ is the set of couplings of $\mu$ and $\nu$ (joint distributions with the prescribed marginals). Intuitively, $W_p$ measures the minimal transport cost of rearranging the mass of $\mu$ into the shape of $\nu$. The $p$-Wasserstein distance is monotone in $p$: for $1\le p\le q$ and $\mu,\nu\in\P_q(\X)$,
\begin{equation}\label{eq:wasserstein-distance-monotone}
W_p(\mu,\nu)\;\le\;W_q(\mu,\nu),
\end{equation}
by Jensen's inequality applied to the optimal coupling. For a point mass, $W_p^p(\nu,\delta_{y_*})=\int\|y-y_*\|^p\,d\nu$, so consensus in $W_p$ is equivalent to concentration of the $p$-th moment around $y_*$; the case $p=2$ is our default.

\subsubsection*{Population metric and its role.}
To control evolutionary operators on $\M_{q,+}(\X)$ we need to simultaneously control linear observables $\mu\mapsto\int\varphi\,d\mu$ for polynomially growing $\varphi$, and nonlinear dependence on the normalized population law $\bar\mu=\mu/\mu(\X)$ that appears in mean-field coefficients (consensus points, pair laws, rank-based selection, law-dependent drifts and diffusions). The weighted BL metric $d_{\mathrm{BL},q}$ is convenient with test functions $\mathcal D$ because it directly bounds test-function increments; stability conditions for law-dependent coefficients, in contrast, are naturally expressed in Wasserstein distance---typically $W_2(\bar\mu,\bar\nu)$, which is the canonical metric for Lipschitz dependence on the law in McKean--Vlasov-type dynamics. Accordingly, we endow $\M_{q,+}(\X)$ with the \emph{population metric}
\begin{equation}\label{eq:pop-metric-app}
d_{q,2}^*(\mu,\nu)\;:=\;d_{\mathrm{BL},q}(\mu,\nu)+W_2(\bar\mu,\bar\nu),\qquad \bar\mu:=\mu/\mu(\X),\;\bar\nu:=\nu/\nu(\X),
\end{equation}
which is well defined whenever $q\ge 2$ (so that $\bar\mu,\bar\nu\in\P_2(\X)$) and $\mu,\nu\in\M_{q,+}(\X)$ (so that both total masses are positive).

\subsubsection*{Noise increments.}
Where mutation kernels are built from a random increment $\psi$ on a probability space $(\Xi,\mathscr{F},\Prob)$ taking values in $\R^{d'}$, we assume $\E[\psi]=0$, $\E[\psi\psi^\top]=\Sigma\in\R^{d'\times d'}$ symmetric and positive semidefinite, and $\E\|\psi\|^3<\infty$ (see Assumption~\ref{ass:mutation-diffusion}). The covariance $\Sigma$ is allowed to be singular; the scalar Gaussian case $\Sigma=I_{d_x}$ is a special case.

\section{Extended Related Work}\label{app:related-work}

This appendix surveys the research traditions on which the present framework builds and shows how the mutation--selection--recombination splitting connects them into a single picture. We organize the discussion into four parts. Section~\ref{subsec:rw-convergence} reviews convergence theory for population-based methods. Section~\ref{subsec:rw-algorithms} surveys the specific algorithm families covered by our framework and their operator decompositions. Section~\ref{subsec:rw-math} discusses the mathematical infrastructure that enables the analysis. Finally, Section~\ref{subsec:rw-synthesis} consolidates the operator-level view across all algorithm families.

\subsection{Convergence theory for population-based optimization}\label{subsec:rw-convergence}

Early theoretical analyses for population-based methods were largely
algorithm-specific. Schema theory~\citep{holland1975adaptation} gave an
influential early account of schema propagation in binary genetic algorithms, but should not be read as a general convergence theorem. Rechenberg's \(1/5\)-success rule introduced an early principle for mutation-strength adaptation in evolution strategies, while subsequent progress-rate analyses developed scaling laws for Gaussian mutation models and strategy-parameter control~\citep{rechenberg1973evolutionsstrategie,beyerEvolutionStrategiesComprehensive2002}. A more probabilistic line models finite-population algorithms as homogeneous finite Markov chains. Rudolph~\citep{rudolphConvergenceAnalysisCanonical1994}
studied a specific binary genetic algorithm with mutation, crossover, and proportional reproduction, which he calls the canonical genetic algorithm; this terminology is local to that literature and is unrelated to our later use of canonical operator forms. Rudolph proved that this particular non-elitist chain does not converge to the global optimum, whereas variants that maintain the best solution found so far do converge.

A large body of work studies the number of fitness evaluations needed to
reach an optimum, or more generally a target set, from simple algorithms on pseudo-Boolean functions~\citep{neumannBioinspiredComputationCombinatorial2010}
to population-based processes and refined runtime-analysis techniques
\citep{corusLevelBasedAnalysisGenetic2018,doerr2020complexity}. Drift analysis bounds hitting times by estimating the expected one-step change of a potential function, and is therefore conceptually close to Lyapunov arguments, although it is used in the runtime-analysis literature for discrete-time stochastic search processes rather than for limiting PDEs. Runtime analysis can yield sharp algorithm-specific bounds. By contrast, our mean-field framework provides structural, algorithm-independent statements that trade such sharpness for
modularity and generality; the two perspectives are therefore complementary.

A particularly close existing mean-field theory among derivative-free optimizers is the theory of consensus-based optimization (CBO)
\citep{pinnauConsensusbasedModelGlobal2017}. In CBO, interacting particles are driven toward a weighted consensus point defined through a Laplace-principle mechanism. Subsequent work developed a PDE-based analytical framework \citep{carrilloAnalyticalFrameworkConsensusbased2018} and later established global convergence and quantitative convergence estimates under suitable assumptions~\citep{fornasierConsensusBasedOptimizationMethods2024}. In the present notation, CBO corresponds to the transport--diffusion component \(G=G_M\), after choosing the CBO drift and covariance. The key extension here is that we also accommodate explicit reaction-type selection and recombination jump operators, which are absent from the basic CBO dynamics but central to many evolutionary algorithms. Thus CBO serves both as a building block and as a point of departure: its nonlinear Fokker--Planck generator reappears as a special case of the mutation component of the TRJ equation, while the selection and recombination generators extend the dynamics beyond the standard CBO
model.

\begingroup
\footnotesize
\setlength{\tabcolsep}{3pt}
\renewcommand{\arraystretch}{1.15}
\setlength{\LTcapwidth}{\linewidth}
\begin{longtable}{@{}L{0.19\linewidth} L{0.26\linewidth} L{0.26\linewidth} L{0.25\linewidth}@{}}
\caption{Operator-level decomposition of population-based optimizers and degenerate single-particle methods. Each row shows how one generation, iteration, or parameter-update cycle decomposes into selection, mutation, and recombination or aggregation in the framework of Section~\ref{subsec:TRJ}. For distribution-adaptation methods, samples are selected in search space, while the selected information updates the parameters of the sampling law.}
\label{tab:ops-full}\\
\toprule
\textit{Algorithm} & \textit{Selection} & \textit{Mutation} & \textit{Recombination or aggregation} \\
\midrule
\endfirsthead
\toprule
\textit{Algorithm} & \textit{Selection} & \textit{Mutation} & \textit{Recombination or aggregation} \\
\midrule
\endhead
\midrule
\multicolumn{4}{r@{}}{\footnotesize\emph{continued on next page}}\\
\endfoot
\bottomrule
\endlastfoot
Real-coded GA
& tournament, truncation, or rank-based parent selection, possibly combined with elitist survivor selection
& polynomial mutation or additive Gaussian perturbations applied to candidate coordinates
& simulated binary crossover, arithmetic crossover, or other pairwise mixing of selected parents \\
\addlinespace[2pt]
$(\mu,\lambda)$-ES
& select or weight the best $\mu$ among $\lambda$ offspring
& Gaussian perturbation, typically isotropic or diagonal, with step-size adaptation
& intermediate or weighted-mean recombination of selected parents or selected steps \\
\addlinespace[2pt]
Differential Evolution
& greedy one-to-one comparison between each target vector and its trial vector
& differential perturbation: add a scaled difference of two population members to a base vector
& binomial or exponential crossover between the target vector and the mutant vector to form the trial vector \\
\addlinespace[2pt]
Particle Swarm Optimization
& personal-best and neighborhood-best positions act as selected attractors
& random cognitive and social acceleration coefficients perturb the velocity field, optionally with additional noise
& velocity update aggregates inertia with attraction toward selected best positions; the state includes position, velocity, and memory \\
\addlinespace[2pt]
Consensus-Based Optimization
& soft selection through the Gibbs-weighted consensus point $v_\alpha(\rho)$, emphasizing particles with lower objective values
& Brownian exploration with diffusion coefficient scaled by the particle--consensus distance; variants may use anisotropic, truncated, or manifold-adapted noise
& barycentric consensus aggregation; each particle is pulled toward the current weighted consensus point, and the noise vanishes as consensus forms \\
\addlinespace[2pt]
CMA-ES
& rank-based utility weighting of sampled candidates
& Gaussian sampling from a search distribution with adapted mean, covariance, and global step-size
& weighted aggregation of selected steps updates the mean; covariance and step-size adaptation use selected steps and evolution paths \\
\addlinespace[2pt]
NES
& fitness shaping or rank-based utility weighting of sampled candidates
& sampling from a parameterized search distribution, typically full-covariance or separable Gaussian
& natural-gradient update of distribution parameters from weighted score-function perturbations \\
\addlinespace[2pt]
EDAs
& elite-set, top-quantile, or likelihood-weighted selection of sampled candidates
& sampling from the current probabilistic model; smoothing or variance floors mitigate premature collapse
& maximum-likelihood or smoothed re-estimation of the sampling-distribution parameters from selected samples \\
\addlinespace[2pt]
Niching GA
& shared-fitness selection, crowding, clearing, or diversity-aware survivor replacement
& as in standard real-coded GAs, e.g.\ polynomial or Gaussian perturbations
& as in standard real-coded GAs, e.g.\ SBX or arithmetic crossover, with diversity enforced through selection or replacement \\
\addlinespace[2pt]
SGD and Adam
& not used at the candidate-selection level
& gradient drift with stochastic mini-batch noise; Adam augments the state by first- and second-moment estimates
& no population recombination; Adam uses temporal aggregation of gradient moments \\
\end{longtable}
\endgroup

\subsection{Algorithm families and their operator decompositions}\label{subsec:rw-algorithms}

A central motivation of the present work is the observation that the vast majority of population-based optimizers, despite their apparent algorithmic diversity, share a common structure at the level of their evolutionary operators. In this subsection we survey the main algorithm families covered by our framework, highlighting for each one how its generation cycle decomposes into mutation, selection, and recombination (the three canonical primitives formalized in Section~\ref{subsec:TRJ}). The decompositions are consolidated in Table~\ref{tab:ops-full} and the corresponding pre-generators are derived in Appendix~\ref{app:operator-calculus}.

\subsubsection*{Direct population-based methods: GAs, ES, DE, and PSO.}
In direct population-based methods the update acts on candidate solutions, or on an augmented state containing candidate solutions together with auxiliary variables such as velocities, strategy parameters, or memory. Real-coded genetic algorithms instantiate the familiar selection--mutation--recombination cycle in continuous search spaces. Common choices include fitness-based parent selection,
Gaussian or polynomial mutation, and real-coded crossover operators such as simulated binary crossover (SBX)~\citep{deb1995simulated}; arithmetic recombination gives another standard mixing mechanism.

Evolution strategies provide a more tightly structured real-valued family. In a \((\mu,\lambda)\)-ES, selection is truncation selection from the offspring population: the next parent population consists of the best \(\mu\) individuals among the \(\lambda\) offspring. Recombination, when used, may be discrete or intermediate; in the latter case the recombinant is an arithmetic mean of selected
parental components. Mutation is typically Gaussian, with the isotropic case as the simplest model and with step-size, coordinate-wise, or covariance adaptation in more refined variants~\citep{rechenberg1973evolutionsstrategie,beyerEvolutionStrategiesComprehensive2002}.

Differential evolution~\citep{storn1997differential} is naturally
population-dependent, but it should not be described as mean-field in the original algorithmic formulation. Trial vectors are generated by adding a scaled difference of two randomly chosen population members to a third one, then combined with the target vector through binomial or exponential crossover, and retained if they improve upon the target. In the present framework, this update can be interpreted as an operator-splitting mechanism: differential mutation and crossover define an
empirical-law-dependent proposal kernel, while greedy replacement defines an objective-based selection step. A McKean--Vlasov or Fokker--Planck limit is therefore a possible modelling limit under an appropriate scaling regime, not a result contained in the original DE analysis.

Particle swarm optimization~\citep{kennedy1995particle} is also
population-dependent, but its state includes both positions and velocities, together with memory variables such as personal-best and social-best locations. The update combines inertial motion with attraction toward these remembered best positions, while random acceleration coefficients provide stochastic exploration. Thus PSO
is not simply a selection--recombination mechanism in the same sense as a genetic algorithm. In the present TRJ perspective, DE and PSO are best viewed as examples whose elementary update components can be represented by suitable proposal, memory, and selection operators, after enlarging the state space where necessary; Table~\ref{tab:ops-full} records this structural correspondence, while Appendix~\ref{app:operator-calculus} gives the associated pre-generator formalism under the assumptions imposed in this paper.

\subsubsection*{Parametric methods: CMA-ES, NES, and EDAs.}
Parametric methods motivate the state-space/search-space distinction central to our framework. The internal state \(x\in\X\) parameterizes a sampling kernel \(K(x,\cdot)\) on the candidate space \(\S\). Candidates are sampled and evaluated in \(\S\), while the persistent population state is updated through the parameters of the sampling law.

CMA-ES~\citep{hansenCMAEvolutionStrategy2023} adapts a Gaussian
search distribution by combining rank-based selection, weighted
recombination of selected steps, covariance adaptation, and cumulative
step-size control. It is therefore a particularly transparent instance of the parametric structure: Gaussian sampling provides the proposal mechanism, fitness ranking selects informative samples, weighted recombination updates the distribution mean, and covariance and step-size adaptation update the shape and scale of the sampling law. NES~\citep{wierstraNaturalEvolutionStrategies2014}
uses the same broad distributional-search viewpoint, but updates the
parameters of the search distribution by natural-gradient ascent on expected fitness, typically estimated through score-function, or log-likelihood-gradient, identities. EDAs~\citep{muehlenbein1996recombination,larranaga2002estimation}
replace explicit crossover and mutation operators by estimation and sampling from a probabilistic model fitted to selected individuals. Thus CMA-ES, NES, and EDAs share a parametric population state, but differ in the parameter-update rule: CMA-ES uses rank-based weighted recombination together with covariance and step-size adaptation; NES uses natural-gradient updates of the search-distribution parameters; and EDAs use maximum likelihood, marginal-frequency, graphical-model, or smoothed estimation updates depending on the chosen model class.

\subsubsection*{Diversity-preserving methods.}
Niching genetic algorithms modify selection or survivor replacement so as to maintain population diversity across multiple basins \citep{goldberg1987sharing,rozenbergHandbookNaturalComputing2012}. Fitness sharing does this by replacing the raw fitness with a shared fitness that penalizes crowded niches, whereas crowding-based methods use similarity-dependent replacement rules to prevent offspring from eliminating individuals in distant niches. In the classical sharing and crowding variants, the underlying mutation and recombination
operators may be kept the same as in the base GA; the niching mechanism enters primarily through the selection or replacement pre-generator. This illustrates the modularity of the operator-splitting approach: modifying one pre-generator is enough to represent a broad class of diversity-preserving variants.

\subsubsection*{Gradient-based methods as a degenerate case.}
The convergence theory of stochastic gradient methods and adaptive variants such as AdaGrad and Adam is extensive
\citep{robbins1951stochastic,duchi2011adaptive,kingma2015adam,reddi2018convergence}. These methods fit into the present formalism only as degenerate, non-population limits: the sampling kernel is \(K(x,\cdot)=\delta_x\), so the algorithmic state itself is the candidate state, and there is no explicit population-level selection
or recombination. For SGD this gives a single-particle transport or stochastic approximation dynamics driven by gradient-oracle noise. For Adam one must enlarge the state to include the first- and second-moment variables, so that the update becomes a multi-channel state dynamics rather than a pure position update. This is structurally reminiscent of covariance or step-size adaptation in evolution strategies, in the limited sense that both augment the search state by adaptive variables controlling the scale or geometry of subsequent moves. The distinctive role of the present framework, however, lies in the explicit selection
and recombination generators, which have no direct analogue in ordinary
gradient methods.

\subsection{Mathematical infrastructure}\label{subsec:rw-math}

\subsubsection*{Gradient flows and optimal transport.}
Gradient flows in metric spaces~\citep{ambrosioGradientFlowsMetric2005}
and the JKO scheme~\citep{jordanVariationalFormulationFokkerPlanck1998}
provide a variational framework for dissipative measure-valued dynamics;
in particular, the JKO scheme identifies the Fokker--Planck equation as a Wasserstein gradient flow of the free energy. Unbalanced optimal transport extends the transport geometry to measures with unequal mass by adding a source term to the continuity equation, leading to Wasserstein--Fisher--Rao or Hellinger--Kantorovich type geometries
\citep{chizatInterpolatingDistanceOptimal2018,lieroOptimalEntropyTransportProblems2018}. Our population metric \(d_{q,2}^*\) is inspired by this line of work, but the present framework is not variational: the TRJ equation is not derived as the gradient flow of a single energy functional, and convergence is proved by Lyapunov estimates rather than by a metric-gradient-flow structure.

\subsubsection*{McKean--Vlasov equations and mean-field methods.}
McKean--Vlasov SDEs~\citep{sznitman1991topics,carmonaProbabilisticTheoryMean2018}
describe stochastic dynamics whose coefficients depend on their own law,
and they arise as mean-field limits of interacting particle systems. Propagation of chaos~\citep{sznitman1991topics} is the classical mechanism by which mean-field limits can be related to finite-population systems, when the corresponding approximation theorem is available. In our setting, the mutation operator gives a McKean--Vlasov-type transport--diffusion dynamics, while selection and recombination add reaction and jump terms beyond the classical diffusive McKean--Vlasov SDE setting. Related measure-valued dynamics also
appear in mean-field analyses of neural-network training
\citep{huMeanfieldLangevinDynamics2021} and in mean-field games
\citep{carmonaProbabilisticTheoryMean2018}; the former focus on gradient
or Langevin training in parameter space, while the latter focus on strategic control and equilibrium. The operator-calculus viewpoint developed here is different: it is designed for derivative-free population-based search and separates mutation, selection, and recombination at the generator level.

\subsubsection*{Operator splitting and dissipation.}
Operator-splitting methods, including the Lie--Trotter product formula and Strang splitting~\citep{trotterApproximationSemiGroups1959,strang1968construction}, are classical in numerical PDE. Our generation step
$T_\tau=M_\tau\circ R_\tau^{\rm bal}\circ S_\tau^{\rm bal}$,
corresponding to selection, then recombination, then mutation, is a
Lie--Trotter-type splitting of the TRJ mechanisms. Theorem~\ref{thm:operator-composition}
shows that, in the infinitesimal limit, the generator of the composition is the sum of the component generators. The key difference from classical splitting is that our operators act on finite measures and include a mass-conservative but mass-redistributing selection component. The closed Lyapunov inequality~\eqref{eq:closed-lyapunov} plays the role of a population-level Polyak--{\L}ojasiewicz or entropy-dissipation condition:
$G[\bar\mu](\Upsilon)\le -\lambda\mathcal V_\Upsilon(\bar\mu)$
directly yields exponential decay. The modular verification in
Proposition~\ref{prop:operatorwise} is tailored to the additive generator structure of the TRJ equation.

\subsection{Synthesis: the operator decomposition across algorithm families}
\label{subsec:rw-synthesis}

The preceding discussion reveals a common organizing principle: the
algorithm families surveyed above can be represented, sometimes after
degenerate identifications or augmentation of the state, in terms of mutation,
selection, and recombination-type mechanisms. Table~\ref{tab:ops-full}
consolidates this observation by displaying one operator decomposition for
nine representative families.

The decomposition is not unique. For instance, covariance adaptation in
CMA-ES can be assigned either to the proposal mechanism, since it changes
the Gaussian sampling law, or to the recombination/adaptation mechanism,
since it aggregates information from selected samples. The splitting shown
in Table~\ref{tab:ops-full} is the one most convenient for deriving the
pre-generators and checking the regularity assumptions~(A1)--(A4) in
Assumption~\ref{assumption:composite}. For direct population-based
methods such as GAs and DE, the basic operators act on candidate solutions
in~\(\S\). For ES and PSO, the Markovian state is more naturally enlarged:
ES may include strategy parameters such as step sizes or covariance variables,
while PSO includes velocities and memory variables. For parametric methods
such as CMA-ES, NES, and EDAs, the persistent state lies in a parameter
space~\(\X\) for a sampling kernel \(K(x,\cdot)\) on~\(\S\); candidates are
sampled and evaluated in~\(\S\), but the long-term update acts on the
parameters of the search distribution.

Each pre-generator derived in Appendix~\ref{app:operator-calculus}
corresponds to one of the three columns in Table~\ref{tab:ops-full}. In the
diffusive regularity class considered here, the mutation pre-generator is a
nonlinear Fokker--Planck operator. The selection pre-generator is a
replicator-type reaction driven by a bounded centered pressure, and the
recombination pre-generator is a pure-jump redistribution operator generated
by a mixing kernel. This correspondence is not accidental; it reflects the
three sufficient regularity classes used in the analysis: mean-field diffusion
mutation (Assumption~\ref{ass:mutation-diffusion}), bounded-pressure
selection (Assumption~\ref{ass:bounded-pressure-app}), and recombination
through a Lipschitz mixing map
(Assumption~\ref{ass:recombination-section-ass}). Under these assumptions,
the Lie--Trotter composition in
Theorem~\ref{thm:operator-composition} yields a well-defined infinitesimal
TRJ generator. Thus the table should be read as a structural embedding of
algorithmic building blocks into the regular operator classes used in this
paper, rather than as a claim that every implementation of every listed
method satisfies the assumptions without augmentation or regularization.

\section{Operator Calculus: Pre-generators and Canonical Operator Verification}\label{app:operator-calculus}

This appendix provides the technical underpinnings of the operator calculus developed in Section~\ref{sec:operators}, including the formal definition of the pre-generator (Section \ref{app:pregenerators}) and the verification that each canonical operator (mutation, selection, recombination) satisfies the regularity conditions~(A1)--(A4) of Assumption~\ref{assumption:composite} (Section \ref{app:operators}). The proofs of the composition theorem (Theorem~\ref{thm:operator-composition}), the TRJ decomposition (Theorem~\ref{thm:TRJ-decomposition}), the induced evolution corollary (Corollary~\ref{cor:induced-S}), and the mean-field convergence theorem (Theorem~\ref{thm:mean-field-convergence}) are collected in Section~\ref{sec:main-proofs} of the main text.

\subsection{Pre-generator: formal definition}\label{app:pregenerators}

\begin{definition}[Pre-generator]\label{def:pregenerator}
Let $(T_\tau)_{\tau>0}$ be a family of operators $T_\tau\colon\M_{q,+}(\X)\to\M_{q,+}(\X)$, and recall that $\mathcal{D}:=C_b^3(\X)$ is the space of bounded, three-times continuously differentiable test functions on $\X$. A mapping $G\colon\M_{q,+}(\X)\to\mathcal{D}'$ is called the infinitesimal pre-generator of $(T_\tau)$ if, for every $\varphi\in\mathcal{D}$ and every $r<\infty$, the following uniform weak limit holds:
\begin{equation}\label{eq:pregenerator-def}
\sup_{\mu\in\M_{q,+}^r}\left|\frac{\langle\varphi,T_\tau[\mu]\rangle-\langle\varphi,\mu\rangle}{\tau}-G[\mu](\varphi)\right|\xrightarrow[\tau\downarrow0]{}0.
\end{equation}
\end{definition}

The uniformity over the moment sublevel $\M_{q,+}^r$ is essential for the composition theorem, where pre-generator errors at intermediate measures must be controlled uniformly.

\subsection{Operator definitions and pre-generator verification}\label{app:operators}

We now derive the pre-generators of the three canonical operators in closed form and verify that each satisfies conditions (A1)--(A4) of Assumption~\ref{assumption:composite}. The operator definitions, regularity assumptions, and examples are stated in Sections~\ref{subsec:TRJ} and~\ref{sec:canonical-detail} of the main text. 

\subsubsection{Mutation operator: pre-generator verification}\label{app:mutation}

For the regularity estimates below we work with the moment sublevels $\M_{q,+}^r:=\{\mu\in\M_{q,+}(\X):\int w_q\,d\mu\le r\}$ and, whenever a uniform lower bound on the total mass is required, the mass-restricted sublevel
\[
\M_{q,+}^{r,m}\;:=\;\bigl\{\mu\in\M_{q,+}(\X):\;\mu(\X)\ge m,\;\int_\X w_q\,d\mu\le r\bigr\},\qquad m>0.
\]

All constants below may additionally depend on the fixed noise law, in particular on $\Sigma$ and on the finite moments of $\psi$ used in the estimates. Because the noise law is fixed once and for all, one could equivalently absorb $\Sigma^{1/2}$ into $B_M$ and work with identity-covariance noise; we retain $\Sigma$ explicitly so that the bookkeeping matches algorithms (e.g.\ CMA-ES) where the noise covariance is prescribed. In this setting $M_\tau[\mu]$ is the pushforward measure obtained by perturbing each $x\sim\mu$ via~\eqref{eq:app-mutation-update}, and mass preservation $M_\tau[\mu](\X)=\mu(\X)$ is immediate. The dependence of $(b_M,B_M)$ on $\bar\mu$ makes the dynamics mean-field, in line with McKean--Vlasov SDEs; linear growth prevents explosion and the joint Lipschitz condition ensures well-posedness of the mean-field limit.

\begin{proposition}[Mutation pre-generator and regularity]\label{prop:app-mutation-gen}
Suppose Assumption~\ref{ass:mutation-diffusion} holds. Then for every $r<\infty$ there exist constants
\[
\tau_0(r)\in(0,1],\qquad r_M(r)\in(r,\infty),
\]
such that, for every $\varphi\in\mathcal{D}$, the following statements hold.
\begin{enumerate}[label=\textup{(\roman*)},leftmargin=*,itemsep=2pt]
\item \emph{Pre-generator (A1).} Define the diffusion matrix $a_M(x;\bar\mu):=B_M(x;\bar\mu)\,\Sigma\,B_M(x;\bar\mu)^\top$ and
\begin{equation}\label{eq:app-mutation-generator}
G_M[\mu](\varphi):=\int_\X \nabla\varphi(x)\cdot b_M(x;\bar\mu)\,\mu(dx)+\frac12\int_\X\Tr\!\bigl(a_M(x;\bar\mu)\,D^2\varphi(x)\bigr)\,\mu(dx).
\end{equation}
Then
\[
\sup_{\mu\in\M_{3,+}^r}\left|\frac{\langle\varphi,M_\tau[\mu]\rangle-\langle\varphi,\mu\rangle}{\tau}-G_M[\mu](\varphi)\right|\xrightarrow[\tau\downarrow 0]{}0.
\]

\item \emph{Moment stability (A2).} For all $\tau\in(0,\tau_0(r)]$ and all $\mu\in\M_{3,+}^r$, the operator preserves mass and moments:
\[
M_\tau[\mu](\X)=\mu(\X),\qquad M_\tau[\mu]\in\M_{3,+}^{r_M(r)}.
\]

\item \emph{Near-identity in $d_{3,2}^*$ (A3).} For every $m>0$ there exists $C_M(r,m)<\infty$ such that for all $\mu\in\M_{3,+}^{r_M(r),m}$ and all $\tau\in(0,\tau_0(r)]$,
\[
d_{3,2}^*\!\left(M_\tau[\mu],\mu\right)
\;=\;d_{\mathrm{BL},3}\!\left(M_\tau[\mu],\mu\right)+W_2\!\left(\overline{M_\tau[\mu]},\bar\mu\right)
\;\le\;C_M(r,m)\,\sqrt{\tau}.
\]

\item \emph{Local Lipschitz continuity of $G_M$ (A4).} For every fixed $\varphi\in\mathcal{D}$ there exists $L_{M,\varphi}(r)<\infty$ such that for all $\mu,\nu\in\M_{3,+}^{r_M(r)}$ with $\mu(\X),\nu(\X)>0$,
\[
|G_M[\mu](\varphi)-G_M[\nu](\varphi)|\le L_{M,\varphi}(r)\,d_{3,2}^*(\mu,\nu).
\]
\end{enumerate}
\end{proposition}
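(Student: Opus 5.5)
Throughout, write $X^+:=m_\tau(x;\mu,\xi)=x+\tau b+\sqrt\tau B\psi$, with $b=b_M(x;\bar\mu)$ and $B=B_M(x;\bar\mu)$, so that $\langle\varphi,M_\tau[\mu]\rangle=\int\E[\varphi(X^+)]\,\mu(dx)$. A preliminary remark: on $\M_{3,+}^r$ the normalized law $\bar\mu$ need not have a bounded third moment unless the mass is bounded below. I will therefore read the constants of Assumption~\ref{ass:mutation-diffusion} as applying to the laws $\bar\mu$ that actually occur, as the paper does in Lemma~\ref{lem:TRJ-scalar-AC}. In any case, all quantities below are integrated against the unnormalized $\mu$, so the mass $\mu(\X)\le r$ appears only as a harmless factor.

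\textbf{(i) and (ii).} For (i) I use a third-order Taylor expansion. Set $h=\tau b+\sqrt\tau B\psi$. Then
\[
\varphi(x+h)=\varphi(x)+\nabla\varphi\cdot h+\tfrac12 h^\top D^2\varphi\, h+\rho,\qquad |\rho|\le\tfrac16\|D^3\varphi\|_{\rm op,\infty}\|h\|^3.
\]
Take the expectation in $\xi$, using $\E\psi=0$ and $\E\psi\psi^\top=\Sigma$. The first-order term gives exactly $\tau\nabla\varphi\cdot b$. The second-order term gives $\tfrac\tau2\Tr(a_MD^2\varphi)+\tfrac{\tau^2}{2}b^\top D^2\varphi\, b$, since the cross terms are linear in $\psi$ and vanish. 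Dividing by $\tau$, the error is bounded by $C_\varphi\big(\tau\|b\|^2+\tau^{-1}\E\|h\|^3\big)$. Now $\E\|h\|^3\lesssim\tau^3\|b\|^3+\tau^{3/2}\|B\|_F^3\E\|\psi\|^3$, and linear growth gives $\|b\|,\|B\|_F\le C(r)(1+\|x\|)$. Hence the integrand error is at most $C\sqrt\tau\,(1+\|x\|)^3$. Integrating against $\mu$ and using $\int w_3\,d\mu\le r$ yields an error $O(\sqrt\tau)$, uniformly on the sublevel.

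For (ii), mass preservation is immediate because $p_\tau(x,\cdot;\mu)$ is a Markov kernel. For the moment bound, use $\|x+h\|^3\le 4\|x\|^3+4\|h\|^3$ (convexity) together with the bound on $\E\|h\|^3$ above. For $\tau\le\tau_0:=1$ this gives $\int w_3\,dM_\tau[\mu]\le C'(r)\int w_3\,d\mu$, so one may take $r_M(r):=\max(C'r,\,r+1)$.

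\textbf{(iii).} For the Wasserstein part, couple each $x$ with $X^+$ (synchronous coupling). Since mass is preserved, $\overline{M_\tau[\mu]}=M_\tau[\bar\mu]$, and therefore
\[
W_2^2\le\int\E\|h\|^2\,d\bar\mu\le 2\tau^2\!\int\!\|b\|^2d\bar\mu+2\tau\|\Sigma\|\!\int\!\|B\|_F^2\,d\bar\mu\le C\tau\!\int\!(1+\|x\|)^2\,d\bar\mu\le C\tau\, r/m .
\]
This is where the lower mass bound $m$ enters. For the $d_{\mathrm{BL},3}$ part, take $f$ with $\|f\|_{\mathrm{BL},3}\le1$. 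Then
\[
|f(x+h)-f(x)|\le\|h\|\big(1+\|x\|^2+\|x+h\|^2\big).
\]
Taking expectations and using Hölder's inequality, this is at most $C\big(\E\|h\|^3\big)^{1/3}\big(1+\|x\|^2+(\E\|h\|^3)^{2/3}\big)$. Since $(\E\|h\|^3)^{1/3}\lesssim\sqrt\tau(1+\|x\|)$, the bound becomes $C\sqrt\tau\,w_3(x)$ for $\tau\le1$. Integrating against $\mu$ gives $C r_M\sqrt\tau$.

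\textbf{(iv).} This step is the main obstacle. I split
\[
G_M[\mu](\varphi)-G_M[\nu](\varphi)=\int g_\mu\,d(\mu-\nu)+\int (g_\mu-g_\nu)\,d\nu,\qquad g_\mu(x):=\nabla\varphi\cdot b_M(x;\bar\mu)+\tfrac12\Tr(a_M(x;\bar\mu)D^2\varphi).
\]
The second term is handled by joint Lipschitz continuity at a fixed $x$. One has $|g_\mu-g_\nu|\le C\,W_2(\bar\mu,\bar\nu)\,(1+\|x\|)$, because the $a_M$ difference factors as $(B_\mu-B_\nu)\Sigma B_\mu^\top+B_\nu\Sigma(B_\mu-B_\nu)^\top$. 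Integrating against $\nu$ gives a term $\le C\,r_M\,W_2(\bar\mu,\bar\nu)$.

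For the first term, I need $\|g_\mu\|_{\mathrm{BL},3}<\infty$ uniformly in $\mu$. The growth part is clear, since $|g_\mu|\lesssim(1+\|x\|)^2\le 4w_3$. The Lipschitz part in $x$ uses spatial Lipschitz continuity of $b_M,B_M$ and boundedness of $D^2\varphi,D^3\varphi$:
\[
|g_\mu(x)-g_\mu(y)|\lesssim\|x-y\|\big(1+\|x\|^2+\|y\|^2\big),
\]
where the products such as $\|B\|\cdot\|B(x)-B(y)\|$ and $\|a_M\|\cdot\|D^2\varphi(x)-D^2\varphi(y)\|$ each contribute at most quadratic weights. This matches the weight $1+\|x\|^{q-1}+\|y\|^{q-1}$ with $q=3$. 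Hence $\big|\int g_\mu\,d(\mu-\nu)\big|\le C\,d_{\mathrm{BL},3}(\mu,\nu)$, and adding the two terms gives (iv).

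The delicate points are exactly this weight matching, which is why $q=3$ is needed, and making sure the constants do not depend on lower bounds on mass in (iv), which they do not, since only $\nu$-integrals of polynomial weights appear.
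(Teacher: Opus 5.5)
Your proposal is correct and follows the paper's proof essentially step by step. The paper uses the same ingredients: a third-order Taylor expansion for (i), the convexity bound $\|x+h\|^3\le 4\|x\|^3+4\|h\|^3$ for (ii), and for (iii) the weighted-Lipschitz estimate together with a synchronous coupling, with the mass floor $m$ entering only through $\int w_3\,d\bar\mu$. For (iv) it likewise uses a two-term split with a uniform $\|\cdot\|_{\mathrm{BL},3}$ bound on the integrand.

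The differences are cosmetic. In (iv) you split around $g_\mu$ and integrate the coefficient difference against $\nu$ rather than $\mu$, and in the BL part of (iii) you use H\"older. Your reading of the coefficient constants as uniform over the unnormalized sublevel is the one the paper's proof adopts, and it is what makes the mass-independence in (iv) go through.
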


Before giving the proof, we remark that the pre-generator statement (i) holds without Assumption~\ref{ass:mutation-diffusion}(ii) (Lipschitz in state and population).

\begin{proof}

\smallskip
\noindent\emph{Part (i): Pre-generator (A1).}
Fix $r<\infty$, and $\mu\in\M_{3,+}^r$ with $\mu(\X)>0$. Let $\bar{\mu}:=\mu/\mu(\X)\in\P_3(\X)$. To simplify notation, we define
\[
b(x):=b_M(x;\bar\mu)\in\X,\quad B(x):=B_M(x;\bar\mu)\in\R^{d\times d},\quad a(x):=B(x)\,\Sigma\,B(x)^\top\in\R^{d\times d}.
\]
The one-step mutation map and kernel are then given by
\[
m_\tau(x;\mu,\xi)=x+\tau\,b(x)+\sqrt{\tau}\,B(x)\,\psi(\xi),\qquad p_\tau(x,A;\mu)=\Prob\bigl(m_\tau(x;\mu,\xi)\in A\bigr),\quad A\in\mathscr{B}(\X).
\]
For the given $\mu$, we define the associated operator on test functions $\varphi\in\mathcal{D}$ as
\[
(P_\tau^\mu\varphi)(x):=\int_\X \varphi(y)\,p_\tau(x,dy;\mu)=\E\bigl[\varphi(m_\tau(x;\mu,\xi))\bigr].
\]
Then by definition of $M_\tau$,
\[
\langle\varphi,M_\tau[\mu]\rangle=\int_\X (P_\tau^\mu\varphi)(x)\,\mu(dx).
\]
Let $\varphi\in\mathcal{D}$ be arbitrary. For $x\in\X$, we define the random increment as
\[
\Delta_\tau(x):=\tau\,b(x)+\sqrt{\tau}\,B(x)\,\psi.
\]
Then $m_\tau(x;\mu,\xi)=x+\Delta_\tau(x)$ and $(P_\tau^\mu\varphi)(x)=\E[\varphi(x+\Delta_\tau(x))]$. By applying Taylor's theorem to $h\mapsto\varphi(x+h)$ around zero, we get
\[
\varphi(x+h)=\varphi(x)+\nabla\varphi(x)\cdot h+\tfrac{1}{2}h^\top D^2\varphi(x)h+R(x,h),
\]
where
\[
|R(x,h)|\le\tfrac{1}{6}\|D^3\varphi\|_{\rm op,\infty}\|h\|^3\le\tfrac{1}{6}d^{3/2}M_3(\varphi)\|h\|^3
\]
with $M_3(\varphi):=\max_{|\alpha|=3}\|\partial^\alpha\varphi\|_\infty$. By setting $h=\Delta_\tau(x)$ and taking expectations in $\psi$, we have
\begin{align*}
\E_\psi[\nabla\varphi(x)\cdot\Delta_\tau(x)]&=\tau\,\nabla\varphi(x)\cdot b(x),\\
\E_\psi\bigl[\Delta_\tau(x)\Delta_\tau(x)^\top\bigr]&=\tau^2 b(x)b(x)^\top+\tau B(x)\Sigma B(x)^\top=\tau^2 b(x)b(x)^\top+\tau\,a(x),
\end{align*}
where the cross-terms vanish because by assumption $\E_\psi[\psi]=0$, and also $\E_\psi[\psi\psi^\top]=\Sigma$. Hence,
\begin{align*}
\E_\psi\bigl[\Delta_\tau(x)^\top D^2\varphi(x)\Delta_\tau(x)\bigr]
&=\Tr\!\bigl(D^2\varphi(x)\,\E_\psi[\Delta_\tau\Delta_\tau^\top]\bigr)\\
&=\tau\,\Tr\!\bigl(a(x)D^2\varphi(x)\bigr)+\tau^2\,b(x)^\top D^2\varphi(x)\,b(x).
\end{align*}
By collecting the terms, we can write the Taylor expansion as
\begin{align}\label{eq:app-mut-taylor-exp}
(P_\tau^\mu\varphi)(x)-\varphi(x)
&=\tau\,\nabla\varphi(x)\cdot b(x)+\tfrac{\tau}{2}\Tr\!\bigl(a(x)D^2\varphi(x)\bigr)\\
&\qquad+\tfrac{\tau^2}{2}b(x)^\top D^2\varphi(x)\,b(x)+\E_\psi\!\bigl[R(x,\Delta_\tau(x))\bigr].\nonumber
\end{align}
Let us define the candidate generator integrand as
\[
(\mathcal{L}^\mu\varphi)(x):=\nabla\varphi(x)\cdot b(x)+\tfrac{1}{2}\Tr\!\bigl(a(x)D^2\varphi(x)\bigr).
\]
Then dividing \eqref{eq:app-mut-taylor-exp} by $\tau$, we obtain
\[
\frac{(P_\tau^\mu\varphi)(x)-\varphi(x)}{\tau}-(\mathcal{L}^\mu\varphi)(x)=\tfrac{\tau}{2}\,b(x)^\top D^2\varphi(x)\,b(x)+\frac{\E_\psi[R(x,\Delta_\tau(x))]}{\tau}.
\]
Hence, defining $Q_\tau[\mu](\varphi):=\tau^{-1}(\langle\varphi,M_\tau[\mu]\rangle-\langle\varphi,\mu\rangle)$, we have
\begin{align}\label{eq:app-mut-Q-G-diff}
Q_\tau[\mu](\varphi)-\int_\X (\mathcal{L}^\mu\varphi)(x)\,\mu(dx)
&=\int_\X\tfrac{\tau}{2}\,b(x)^\top D^2\varphi(x)\,b(x)\,\mu(dx)\quad(=:I_1)\\
&\qquad+\int_\X\frac{\E_\psi[R(x,\Delta_\tau(x))]}{\tau}\,\mu(dx)\quad(=:I_2).\nonumber
\end{align}
It now remains to show that the RHS in \eqref{eq:app-mut-Q-G-diff} vanishes as $\tau\downarrow 0$.

By boundedness of $D^2\varphi$, the integrand in $I_1$ has bound
\begin{align*}
\bigl|b(x)^\top D^2\varphi(x)\,b(x)\bigr|
&=\bigl|\Tr\!\bigl(D^2\varphi(x)\,b(x)b(x)^\top\bigr)\bigr|
\le\|D^2\varphi(x)\|_F\,\|b(x)b(x)^\top\|_F\\
&=\|D^2\varphi(x)\|_F\,\|b(x)\|^2
\le\|D^2\varphi\|_{F,\infty}\,2C(r)^2(1+\|x\|^2)\\
&=:C'(r)(1+\|x\|^2),
\end{align*}
where we have used Assumption~\ref{ass:mutation-diffusion}(i): $\|b(x)\|^2\le C(r)^2(1+\|x\|)^2\le 2C(r)^2(1+\|x\|^2)$. Therefore
\begin{align}\label{eq:app-mut-I1}
|I_1|
&\le\int_\X\Bigl|\tfrac{\tau}{2}\,b(x)^\top D^2\varphi(x)\,b(x)\Bigr|\,d\mu
\le\tfrac{\tau}{2}\,C'(r)\int_\X(1+\|x\|^2)\,\mu(dx)\nonumber\\
&\le\tau\,C'(r)\int_\X(1+\|x\|^3)\,\mu(dx)\le\tau\,C'(r)\,r\xrightarrow[\tau\downarrow 0]{}0,
\end{align}
since $\int(1+\|x\|^3)\,\mu(dx)\le r$ for $\mu\in\M_{3,+}^r$ and $1+\|x\|^2\le 2(1+\|x\|^3)$.

Let us now consider the second term $I_2$. From the Taylor expansion, we have
\[
\left|\frac{\E_\psi[R(x,\Delta_\tau(x))]}{\tau}\right|\le\frac{d^{3/2}M_3(\varphi)}{6}\,\frac{\E_\psi\|\Delta_\tau(x)\|^3}{\tau}.
\]
By convexity of $\|x\|^p$ for $p>1$, we have the basic inequality
\begin{align}\label{eq:app-mut-basic-ineq}
\|u+v\|^p\le 2^{p-1}(\|u\|^p+\|v\|^p).
\end{align}
Now, applying this to $u=\tau\,b(x)$ and $v=\sqrt{\tau}\,B(x)\,\psi$ with $p=3$ gives
\begin{align}\label{eq:app-mut-Delta-tau-ineq}
\|\Delta_\tau(x)\|^3\le 4\bigl(\tau^3\|b(x)\|^3+\tau^{3/2}\|B(x)\,\psi\|^3\bigr).
\end{align}
Hence
\[
\frac{\E_\psi\|\Delta_\tau(x)\|^3}{\tau}\le 4\bigl(\tau^2\|b(x)\|^3+\tau^{1/2}\|B(x)\|_F^3\,\E_\psi\|\psi\|^3\bigr).
\]
So pointwise in $x$, the remainder is $O(\tau^{1/2})$ as $\tau\downarrow 0$, since by Assumption~\ref{ass:mutation-diffusion} we have $\E\|\psi\|^3<\infty$. Under the linear growth condition $\|b(x)\|+\|B(x)\|_F\le C(r)(1+\|x\|)$, the bound becomes
\[
\frac{|\E_\psi[R(x,\Delta_\tau(x))]|}{\tau}\le C_{\varphi,r}\,\tau^{1/2}(1+\|x\|^3)\text{ for small }\tau,
\]
for some constant $C_{\varphi,r}$ depending on $\varphi$, $r$, and $\E\|\psi\|^3$. Then
\begin{align}\label{eq:app-mut-I2}
|I_2|\le\int_\X\left|\frac{\E_\psi[R(x,\Delta_\tau(x))]}{\tau}\right|\mu(dx)
\le C_{\varphi,r}\,\tau^{1/2}\int_\X(1+\|x\|^3)\,\mu(dx)\le C_{\varphi,r}\,\tau^{1/2}\,r\xrightarrow[\tau\downarrow 0]{}0.
\end{align}
Since both bounds \eqref{eq:app-mut-I1} and \eqref{eq:app-mut-I2} are uniform in $\mu\in\M_{3,+}^r$, then \eqref{eq:app-mut-Q-G-diff} gives
\[
\sup_{\mu\in\M_{3,+}^r}\left|Q_\tau[\mu](\varphi)-\int_\X (\mathcal{L}^\mu\varphi)(x)\,\mu(dx)\right|\xrightarrow[\tau\downarrow 0]{}0,
\]
which establishes~(A1).

\smallskip
\noindent\emph{Part (ii): Moment stability (A2).}
Fix $r<\infty$. Let $\mu\in\M_{3,+}^r$, $\mu(\X)>0$, and $\bar\mu=\mu/\mu(\X)$. As in part (i), we abbreviate $b(x):=b_M(x;\bar\mu)$, $B(x):=B_M(x;\bar\mu)$, $a(x):=B(x)\Sigma B(x)^\top$. Let $m_\tau(x)=x+\Delta_\tau(x)$ and $\Delta_\tau(x):=\tau b(x)+\sqrt\tau B(x)\psi$. Since $p_\tau(x,\cdot;\mu)$ is a probability kernel, we have for any $\mu\in\M_+(\X)$ that
\[
M_\tau[\mu](\X)=\int_\X p_\tau(x,\X;\mu)\,\mu(dx)=\int_\X 1\,\mu(dx)=\mu(\X),
\]
so $M_\tau$ is mass-preserving. To show moment stability, we start by noting that
\[
\int_\X(1+\|y\|^3)\,M_\tau[\mu](dy)=\int_\X\E_\psi\!\bigl[1+\|m_\tau(x)\|^3\bigr]\,\mu(dx).
\]
Applying the basic inequality \eqref{eq:app-mut-basic-ineq} with $p=3$ to $m_\tau$ and $\Delta_\tau$ gives the estimates
\begin{align*}
\|m_\tau(x)\|^3&=\|x+\Delta_\tau(x)\|^3\le 4\|x\|^3+4\|\Delta_\tau(x)\|^3,\\
\|\Delta_\tau(x)\|^3&\le 4\tau^3\|b(x)\|^3+4\tau^{3/2}\|B(x)\psi\|^3\le 4\tau^3\|b(x)\|^3+4\tau^{3/2}\|B(x)\|_F^3\|\psi\|^3.
\end{align*}
Taking expectations in $\psi$ and using the assumption $\E_\psi\|\psi\|^3<\infty$,
\[
\E_\psi\|\Delta_\tau(x)\|^3\le 4\tau^3\|b(x)\|^3+4\tau^{3/2}\|B(x)\|_F^3\,\E_\psi\|\psi\|^3.
\]
Hence for $\tau\le 1$, we can use the linear growth assumption for $b$ and $B$ to find $C'(r)<\infty$ such that
\begin{align}\label{eq:app-mut-third-order}
\E_\psi\|\Delta_\tau(x)\|^3\le C'(r)\tau^{3/2}(1+\|x\|^3).
\end{align}
Then, we can choose $C_0(r)<\infty$ to obtain the following bound for all $\tau\in(0,1]$:
\[
\E_\psi\!\bigl[1+\|m_\tau(x)\|^3\bigr]\le 1+4\|x\|^3+4C'(r)\tau^{3/2}(1+\|x\|^3)\le C_0(r)(1+\|x\|^3).
\]
Therefore, setting $r_M(r):=\max\{C_0(r)r,\,r+1\}$ and $\tau_0(r):=1$ gives
\begin{align}\label{eq:app-mut-moment-stability}
\int_\X(1+\|y\|^3)\,M_\tau[\mu](dy)\le C_0(r)\int_\X(1+\|x\|^3)\,\mu(dx)\le C_0(r)\,r,
\end{align}
which shows that~(A2) holds.

\smallskip
\noindent\emph{Part (iii): Near-identity in $d_{3,2}^*$ (A3).}
To show the claim, we divide the proof into two parts following the definition of the population distance.

\emph{(iii.a) Bound for $d_{\mathrm{BL},3}$.} Let $f\in\mathrm{BL}_3$ be a function such that $\|f\|_{\mathrm{BL},3}\le 1$, where the norm $\|f\|_{\mathrm{BL},3}$ is given by
\[
\|f\|_{\mathrm{BL},3}:=\sup_x\frac{|f(x)|}{1+\|x\|^3}+\sup_{x\ne y}\frac{|f(x)-f(y)|}{\|x-y\|(1+\|x\|^2+\|y\|^2)}.
\]
Therefore, if $\|f\|_{\mathrm{BL},3}\le 1$, the second term in the sum must be $\le 1$, which directly implies the inequality
\[
|f(y)-f(x)|\le\|y-x\|(1+\|x\|^2+\|y\|^2).
\]
Hence, we have that
\[
\bigl|\E\bigl[f(m_\tau(x))-f(x)\bigr]\bigr|\le\E|f(m_\tau(x))-f(x)|\le\E\!\bigl[\|\Delta_\tau(x)\|\bigl(1+\|x\|^2+\|m_\tau(x)\|^2\bigr)\bigr].
\]
Using this inequality, we can now write
\begin{align*}
|\langle f,M_\tau[\mu]-\mu\rangle|
&=\left|\int_\X\E_\psi\bigl[f(m_\tau(x))-f(x)\bigr]\,\mu(dx)\right|\\
&\le\int_\X\E_\psi\!\bigl[\|\Delta_\tau(x)\|(1+\|x\|^2+\|m_\tau(x)\|^2)\bigr]\,\mu(dx).
\end{align*}
From $\|m_\tau(x)\|^2=\|x+\Delta_\tau(x)\|^2\le 2\|x\|^2+2\|\Delta_\tau(x)\|^2$, we have
\[
1+\|x\|^2+\|m_\tau(x)\|^2\le 1+3\|x\|^2+2\|\Delta_\tau(x)\|^2,
\]
and therefore
\begin{align}\label{eq:app-mut-BL-Delta}
\|\Delta_\tau(x)\|(1+\|x\|^2+\|m_\tau(x)\|^2)
\le\underbrace{3(1+\|x\|^2)\|\Delta_\tau(x)\|}_{=:I_1}+\underbrace{3\|\Delta_\tau(x)\|^3}_{=:I_2}.
\end{align}
Next, let us find bounds for $I_1$ and $I_2$. Noting that $\E_\psi\|\psi\|<\infty$ follows from $\E_\psi\|\psi\|^3<\infty$ (Assumption~\ref{ass:mutation-diffusion}), we can now use the linear growth assumption for $b$ and $B$ to find $C(r)<\infty$ such that
\[
\E_\psi\|\Delta_\tau(x)\|\le\tau\|b(x)\|+\sqrt\tau\|B(x)\|_F\,\E_\psi\|\psi\|\le C(r)\sqrt\tau\,(1+\|x\|)\quad\text{for }\tau\le 1.
\]
Hence, together with \eqref{eq:app-mut-third-order}, we have
\begin{align*}
\E[I_1]&=3(1+\|x\|^2)\,\E_\psi\|\Delta_\tau(x)\|\le C_1(r)\sqrt\tau\,(1+\|x\|^3),\\
\E[I_2]&=3\,\E_\psi\|\Delta_\tau(x)\|^3\le C_2(r)\tau^{3/2}(1+\|x\|^3)
\end{align*}
for some constants $C_1(r),C_2(r)<\infty$. Hence, for $\tau\le 1$ and $C_3(r):=C_1(r)+C_2(r)$,
\[
|\langle f,M_\tau[\mu]-\mu\rangle|\le C_3(r)\sqrt\tau\int_\X(1+\|x\|^3)\,\mu(dx)\le C_3(r)\sqrt\tau\,r_M(r),
\]
since $\int_\X(1+\|x\|^3)\,\mu(dx)\le r_M(r)$ for $\mu\in\M_{3,+}^{r_M(r)}$. Now, defining $C_M^{\mathrm{BL}}(r):=C_3(r)\,r_M(r)$ and taking the supremum over $\|f\|_{\mathrm{BL},3}\le 1$ yields
\begin{align}\label{eq:app-mut-M-BL}
d_{\mathrm{BL},3}(M_\tau[\mu],\mu)\le C_M^{\mathrm{BL}}(r)\sqrt\tau.
\end{align}

\emph{(iii.b) Bound for $W_2\bigl(\overline{M_\tau[\mu]},\bar\mu\bigr)$.} Let us start by showing that normalization commutes with mutation. Because $p_\tau(\cdot,\cdot;\mu)$ depends on $\mu$ only through $\bar\mu=\mu/\mu(\X)$, and because $M_\tau$ is linear in $\mu$, we have for any $c>0$ that $M_\tau[c\mu]=c\,M_\tau[\mu]$. Moreover, mass preservation gives $M_\tau[\mu](\X)=\mu(\X)$. Therefore,
\[
\overline{M_\tau[\mu]}=\frac{M_\tau[\mu]}{M_\tau[\mu](\X)}=\frac{M_\tau[\mu]}{\mu(\X)}=M_\tau\!\left[\frac{\mu}{\mu(\X)}\right]=M_\tau[\bar\mu].
\]
So the Wasserstein term is $W_2\bigl(\overline{M_\tau[\mu]},\bar\mu\bigr)=W_2(M_\tau[\bar\mu],\bar\mu)$.

Next, let us construct a synchronous coupling bound for $W_2$. Let $\bar\mu:=\mu/\mu(\X)\in\P_3(\X)$ and define a coupling between $X\sim\bar\mu$ and $Y\sim M_\tau[\bar\mu]$ by using the same noise
\[
Y=X+\tau\,b_M(X;\bar\mu)+\sqrt\tau\,B_M(X;\bar\mu)\,\psi,\quad\psi\sim\operatorname{Law}(\psi),\ \E[\psi]=0,\ \E_\psi[\psi\psi^\top]=\Sigma.
\]
Then, by definition of $W_2$,
\[
W_2^2(M_\tau[\bar\mu],\bar\mu)\le\E\|Y-X\|^2=\E\|\Delta_\tau(X)\|^2,\qquad\Delta_\tau(X):=\tau b(X)+\sqrt\tau B(X)\psi,
\]
where we can use the convexity of $\|\cdot\|^2$ to obtain
\[
\E_\psi\|\Delta_\tau(X)\|^2\le 2\tau^2\,\E_\psi\|b(X)\|^2+2\tau\,\E_\psi\|B(X)\psi\|^2.
\]
Using $\E_\psi\|B(X)\psi\|^2=\E_\psi\bigl[\Tr(B(X)\Sigma B(X)^\top)\bigr]$ when $\E_\psi[\psi\psi^\top]=\Sigma$, we have
\[
\E_\psi\|B(X)\psi\|^2\le\|\Sigma\|_F\,\E\|B(X)\|_F^2.
\]
Now from the linear growth assumption with constant $C(r)<\infty$, we get
\[
\|b(X)\|+\|B(X)\|_F\le C(r)(1+\|X\|).
\]
Hence, we can find $C'(r)<\infty$ such that
\[
\|b(X)\|^2+\|\Sigma\|_F\|B(X)\|_F^2\le C'(r)(1+\|X\|^2).
\]
Since $\bar\mu\in\P_3(\X)$, we have $\E\|X\|^2<\infty$, and on the moment sublevel, as $\mu\in\M_{3,+}^{r_M(r),m}$, hence $\mu(\X)\ge m>0$, we have a uniform bound $\E[1+\|X\|^2]\le C''(r,m)$, where we write $r=r_M(r)$ for short. Therefore
\[
\E\|\Delta_\tau(X)\|^2\le 2\tau^2 C'(r)\,\E[1+\|X\|^2]+2\tau C'(r)\,\E[1+\|X\|^2]=:C_*(r,m)\tau
\]
for $\tau\le 1$ absorbing the $\tau^2$ term. Thus,
\[
W_2(M_\tau[\bar\mu],\bar\mu)\le\sqrt{C_*(r,m)}\sqrt\tau=:C_M^W(r,m)\sqrt\tau.
\]
Returning to $\mu$, we have shown
\begin{align}\label{eq:app-mut-M-W2}
W_2\bigl(\overline{M_\tau[\mu]},\bar\mu\bigr)=W_2(M_\tau[\bar\mu],\bar\mu)\le C_M^W(r,m)\sqrt\tau.
\end{align}
Now combining \eqref{eq:app-mut-M-BL} and \eqref{eq:app-mut-M-W2} gives
\begin{align*}
d_{3,2}^*(M_\tau[\mu],\mu)
&=d_{\mathrm{BL},3}(M_\tau[\mu],\mu)+W_2\bigl(\overline{M_\tau[\mu]},\bar\mu\bigr)\\
&\le\bigl(C_M^{\mathrm{BL}}(r)+C_M^W(r,m)\bigr)\sqrt\tau=:C_M(r,m)\sqrt\tau,
\end{align*}
which shows that~(A3) holds.

\smallskip
\noindent\emph{Part (iv): Local Lipschitz continuity of $G_M$ (A4).}
Fix $\varphi\in\mathcal{D}$ and $r<\infty$. For $\mu,\nu\in\M_{3,+}^{r_M(r)}$ with $\mu(\X),\nu(\X)>0$, write $\bar\mu=\mu/\mu(\X)$, $\bar\nu=\nu/\nu(\X)$, and
\[
G_M[\mu](\varphi)=\int_\X F(x;\bar\mu)\,\mu(dx),
\]
where
\[
F(x;\bar\mu):=\nabla\varphi(x)\cdot b_M(x;\bar\mu)+\tfrac{1}{2}\Tr\!\bigl(a_M(x;\bar\mu)D^2\varphi(x)\bigr).
\]
Then
\begin{align}\label{eq:app-mut-G-diff}
G_M[\mu](\varphi)-G_M[\nu](\varphi)=\int\underbrace{(F(x;\bar\mu)-F(x;\bar\nu))}_{=:T_1}\mu(dx)+\int\underbrace{F(x;\bar\nu)}_{=:T_2}(\mu-\nu)(dx).
\end{align}

\emph{(iv.1) Growth bound for the integrand $T_2$ of form $|F(x;\bar\nu)|\lesssim 1+\|x\|^2$.}
Because $\nu\in\M_{3,+}^{r_M(r)}$, the coefficient bounds from Assumption~\ref{ass:mutation-diffusion} apply with constants depending on the sublevel $r_M(r)$; i.e., there exist $C=C_{r_M(r)}<\infty$ and $L=L_{r_M(r)}<\infty$ such that for all $x,y\in\X$,
\begin{align*}
&\|b_M(x;\bar\nu)\|+\|B_M(x;\bar\nu)\|_F\le C(1+\|x\|),\\
&\|b_M(x;\bar\nu)-b_M(y;\bar\nu)\|+\|B_M(x;\bar\nu)-B_M(y;\bar\nu)\|_F\le L\|x-y\|.
\end{align*}
Also, since $\varphi\in\mathcal{D}$, we use the bounded derivative norms
\[
\|\nabla\varphi\|_\infty<\infty,\quad\|D^2\varphi\|_{F,\infty}<\infty,\quad\|D^3\varphi\|_{\rm op,\infty}<\infty.
\]
Starting with the first term, we have
\begin{align}\label{eq:app-mut-F1}
|\nabla\varphi(x)\cdot b_M(x;\bar\nu)|\le\|\nabla\varphi\|_\infty\|b_M(x;\bar\nu)\|\le\|\nabla\varphi\|_\infty\,2C(1+\|x\|^2),
\end{align}
since $\|b_M(x;\bar\nu)\|\le C(1+\|x\|)$ and $1+\|x\|\le 2(1+\|x\|^2)$ for all $x\in\X$. This follows from the elementary inequality $t\le 1+t^2$ for all $t\ge 0$, hence $1+t\le 1+(1+t^2)=2+t^2\le 2(1+t^2)$. For the second term in $F(x;\bar\nu)$, we have
\[
\bigl|\Tr(a_M(x;\bar\nu)D^2\varphi(x))\bigr|\le\|a_M(x;\bar\nu)\|_F\,\|D^2\varphi\|_{F,\infty}\le\|B_M(x;\bar\nu)\|_F^2\,\|D^2\varphi\|_{F,\infty}.
\]
By linear growth, $\|B_M(x;\bar\nu)\|_F\le C(1+\|x\|)$, hence
\begin{align}\label{eq:app-mut-F2}
\tfrac{1}{2}\bigl|\Tr(a_M(x;\bar\nu)D^2\varphi(x))\bigr|\le\tfrac{1}{2}\|D^2\varphi\|_{F,\infty}C^2(1+\|x\|)^2\le C^2(1+\|x\|^2)\|D^2\varphi\|_{F,\infty}.
\end{align}
Combining \eqref{eq:app-mut-F1} and \eqref{eq:app-mut-F2} gives the claimed growth:
\begin{align}\label{eq:app-mut-T2-growth}
|F(x;\bar\nu)|\le\bigl(2C\|\nabla\varphi\|_\infty+C^2\|D^2\varphi\|_{F,\infty}\bigr)(1+\|x\|^2)=:C_{T_2}(r,\varphi)(1+\|x\|^2).
\end{align}
Then, for the weighted sup part of $\|F\|_{\mathrm{BL},3}$, we have
\begin{align}\label{eq:app-mut-F-sup}
\|F(\cdot;\bar\nu)\|_{0,3}:=\sup_{x\in\X}\frac{|F(x;\bar\nu)|}{1+\|x\|^3}\le C_{T_2}(r,\varphi)\sup_{x\in\X}\frac{1+\|x\|^2}{1+\|x\|^3}\le 2C_{T_2}(r,\varphi),
\end{align}
since $(1+\|x\|^2)/(1+\|x\|^3)\le 2$ for all $\|x\|\ge 0$.

\emph{(iv.2) Weighted Lipschitz seminorm.}
To show that $\|F(\cdot;\bar\nu)\|_{\mathrm{BL},3}<\infty$, we also need to establish $|F(x;\bar\nu)-F(y;\bar\nu)|\lesssim\|x-y\|(1+\|x\|^2+\|y\|^2)$ by finding bounds for the terms $A$ and $B$ in the decomposition
\begin{align*}
F(x;\bar\nu)-F(y;\bar\nu)
&=\underbrace{\nabla\varphi(x)\cdot b_M(x;\bar\nu)-\nabla\varphi(y)\cdot b_M(y;\bar\nu)}_{=:A}\\
&\qquad+\tfrac{1}{2}\underbrace{\bigl(\Tr(a_M(x;\bar\nu)D^2\varphi(x))-\Tr(a_M(y;\bar\nu)D^2\varphi(y))\bigr)}_{=:B}.
\end{align*}
By adding and subtracting $\nabla\varphi(y)\cdot b_M(x;\bar\nu)$, we can write
\[
A=\underbrace{(\nabla\varphi(x)-\nabla\varphi(y))\cdot b_M(x;\bar\nu)}_{=:A_1}+\underbrace{\nabla\varphi(y)\cdot(b_M(x;\bar\nu)-b_M(y;\bar\nu))}_{=:A_2}.
\]
Using the mean value theorem, we can bound the gradient difference by $\|\nabla\varphi(x)-\nabla\varphi(y)\|\le\|D^2\varphi\|_{F,\infty}\|x-y\|$ to obtain
\begin{align}\label{eq:app-mut-A1}
|A_1|=|(\nabla\varphi(x)-\nabla\varphi(y))\cdot b_M(x;\bar\nu)|\le\|D^2\varphi\|_{F,\infty}\|x-y\|\,C(1+\|x\|).
\end{align}
For the $b_M$-difference, we have
\begin{align}\label{eq:app-mut-A2}
|A_2|&=|\nabla\varphi(y)\cdot(b_M(x;\bar\nu)-b_M(y;\bar\nu))|\le\|\nabla\varphi\|_\infty\|b_M(x;\bar\nu)-b_M(y;\bar\nu)\|\nonumber\\
&\le\|\nabla\varphi\|_\infty\,L\|x-y\|.
\end{align}
Combining the bounds \eqref{eq:app-mut-A1} and \eqref{eq:app-mut-A2}, and noting that $1+\|x\|\le 2(1+\|x\|^2)\le 2(1+\|x\|^2+\|y\|^2)$, gives
\begin{align}\label{eq:app-mut-A}
|A|&\le|A_1|+|A_2|\le\|D^2\varphi\|_{F,\infty}\|x-y\|\,2C(1+\|x\|^2)+\|\nabla\varphi\|_\infty\,L\|x-y\|\nonumber\\
&\le\bigl(2C\|D^2\varphi\|_{F,\infty}+\|\nabla\varphi\|_\infty L\bigr)\|x-y\|(1+\|x\|^2+\|y\|^2)\nonumber\\
&=:C_A(r,\varphi)\|x-y\|(1+\|x\|^2+\|y\|^2).
\end{align}
Next, adding and subtracting $\Tr(a_M(y;\bar\nu)D^2\varphi(x))$, we can write
\[
B=\underbrace{\Tr\!\bigl((a_M(x;\bar\nu)-a_M(y;\bar\nu))D^2\varphi(x)\bigr)}_{=:B_1}+\underbrace{\Tr\!\bigl(a_M(y;\bar\nu)(D^2\varphi(x)-D^2\varphi(y))\bigr)}_{=:B_2}.
\]
For the first term in $B$, we have the bound
\[
|B_1|=\bigl|\Tr\!\bigl((a_M(x;\bar\nu)-a_M(y;\bar\nu))D^2\varphi(x)\bigr)\bigr|\le\|a_M(x;\bar\nu)-a_M(y;\bar\nu)\|_F\,\|D^2\varphi\|_{F,\infty}.
\]
Now, since $a=B\Sigma B^\top$,
\begin{align*}
a_M(x;\bar\nu)-a_M(y;\bar\nu)
&=B_M(x;\bar\nu)\Sigma(B_M(x;\bar\nu)-B_M(y;\bar\nu))^\top\\
&\quad+(B_M(x;\bar\nu)-B_M(y;\bar\nu))\Sigma B_M(y;\bar\nu)^\top,
\end{align*}
we obtain
\begin{align*}
\|a_M(x;\bar\nu)-a_M(y;\bar\nu)\|_F
&\le\|\Sigma\|_F\bigl(\|B_M(x;\bar\nu)\|_F+\|B_M(y;\bar\nu)\|_F\bigr)\|B_M(x;\bar\nu)-B_M(y;\bar\nu)\|_F\\
&\le\|\Sigma\|_F\bigl(C(1+\|x\|)+C(1+\|y\|)\bigr)L\|x-y\|\\
&\le 2\|\Sigma\|_F\,CL(1+\|x\|+\|y\|)\|x-y\|.
\end{align*}
Because $\|x\|\le 1+\|x\|^2$ and $\|y\|\le 1+\|y\|^2$,
\[
1+\|x\|+\|y\|\le 1+(1+\|x\|^2)+(1+\|y\|^2)=3+\|x\|^2+\|y\|^2\le 3(1+\|x\|^2+\|y\|^2),
\]
hence the bound for $B_1$ becomes
\begin{align}\label{eq:app-mut-B1}
|B_1|&\le 6\|\Sigma\|_F\,CL\|D^2\varphi\|_{F,\infty}(1+\|x\|^2+\|y\|^2)\|x-y\|\nonumber\\
&=:C_{B,1}(r,\varphi)(1+\|x\|^2+\|y\|^2)\|x-y\|.
\end{align}
For the second term in $B$, the boundedness of third derivatives together with the mean value theorem for the Hessian gives
\[
\|D^2\varphi(x)-D^2\varphi(y)\|_F\le d^{3/2}M_3(\varphi)\|x-y\|.
\]
This follows from noting that each second derivative $\partial_{ij}\varphi$ is Lipschitz with constant
\[
\max_k\|\partial_{ijk}\varphi\|_\infty\le M_3(\varphi).
\]
Hence, summing over $i,j$ yields the Frobenius bound with factor $d^{3/2}$. Since also
\[
\|a_M(y;\bar\nu)\|_F\le\|\Sigma\|_F\|B_M(y;\bar\nu)\|_F^2\le\|\Sigma\|_F C^2(1+\|y\|)^2,
\]
we obtain
\begin{align}\label{eq:app-mut-B2}
|B_2|&=\bigl|\Tr(a_M(y;\bar\nu)(D^2\varphi(x)-D^2\varphi(y)))\bigr|\nonumber\\
&\le\|a_M(y;\bar\nu)\|_F\,\|D^2\varphi(x)-D^2\varphi(y)\|_F\nonumber\\
&\le\|\Sigma\|_F C^2(1+\|y\|)^2 d^{3/2}M_3(\varphi)\|x-y\|\nonumber\\
&\le 5\|\Sigma\|_F C^2 d^{3/2}M_3(\varphi)(1+\|x\|^2+\|y\|^2)\|x-y\|\nonumber\\
&=:C_{B,2}(r,\varphi)(1+\|x\|^2+\|y\|^2)\|x-y\|,
\end{align}
where we have used $1+\|y\|\le 2(1+\|y\|^2)\le 2(1+\|x\|^2+\|y\|^2)$ and $\|D^3\varphi(x)\|_{\rm op}\le d^{3/2}M_3(\varphi)$. Hence, combining \eqref{eq:app-mut-B1} and \eqref{eq:app-mut-B2} and denoting $C_B(r,\varphi)=C_{B,1}(r,\varphi)+C_{B,2}(r,\varphi)$ gives
\begin{align}\label{eq:app-mut-B}
|B|\le|B_1|+|B_2|\le C_B(r,\varphi)(1+\|x\|^2+\|y\|^2)\|x-y\|.
\end{align}
Therefore, combining \eqref{eq:app-mut-A} and \eqref{eq:app-mut-B},
\begin{align*}
|F(x;\bar\nu)-F(y;\bar\nu)|\le|A|+|B|\le(C_A(r,\varphi)+C_B(r,\varphi))\|x-y\|(1+\|x\|^2+\|y\|^2),
\end{align*}
which implies
\begin{align}\label{eq:app-mut-AB-BL}
[F(\cdot;\bar\nu)]_{1,2}:=\sup_{x\ne y}\frac{|F(x;\bar\nu)-F(y;\bar\nu)|}{\|x-y\|(1+\|x\|^2+\|y\|^2)}\le C_A(r,\varphi)+C_B(r,\varphi).
\end{align}
Together \eqref{eq:app-mut-F-sup} and \eqref{eq:app-mut-AB-BL} ensure that
\[
\|F(\cdot;\bar\nu)\|_{\mathrm{BL},3}=\|F(\cdot;\bar\nu)\|_{0,3}+[F(\cdot;\bar\nu)]_{1,2}
\]
and
\[
\sup_{\nu\in\M_{3,+}^{r_M(r)}}\|F(\cdot;\bar\nu)\|_{\mathrm{BL},3}\le 2C_{T_2}(r,\varphi)+C_A(r,\varphi)+C_B(r,\varphi)=:C_F(r,\varphi).
\]
Hence, we obtain a bound for the second term in \eqref{eq:app-mut-G-diff} as
\begin{align}\label{eq:app-mut-T2-BL}
\left|\int F(x;\bar\nu)\,(\mu-\nu)(dx)\right|\le\|F(\cdot;\bar\nu)\|_{\mathrm{BL},3}\,d_{\mathrm{BL},3}(\mu,\nu)\le C_F(r,\varphi)\,d_{\mathrm{BL},3}(\mu,\nu).
\end{align}

\emph{(iv.3) Bound for the integral of $T_1$ in terms of $W_2(\bar\mu,\bar\nu)$.}
By Assumption~\ref{ass:mutation-diffusion}(ii) with $x=y$,
\[
\|b_M(x;\bar\mu)-b_M(x;\bar\nu)\|+\|B_M(x;\bar\mu)-B_M(x;\bar\nu)\|_F\le L(r)\,W_2(\bar\mu,\bar\nu).
\]
Also
\begin{align*}
\|a_M(x;\bar\mu)-a_M(x;\bar\nu)\|_F
&\le\|\Sigma\|_F\bigl(\|B_M(x;\bar\mu)\|_F+\|B_M(x;\bar\nu)\|_F\bigr)\|B_M(x;\bar\mu)-B_M(x;\bar\nu)\|_F\\
&\le 2\|\Sigma\|_F\,C(r_M(r))L(r_M(r))(1+\|x\|)\,W_2(\bar\mu,\bar\nu),
\end{align*}
using linear growth and Lipschitz continuity. Consequently,
\begin{align*}
|F(x;\bar\mu)-F(x;\bar\nu)|
&\le\|\nabla\varphi\|_\infty\|b_M(x;\bar\mu)-b_M(x;\bar\nu)\|\\
&\quad+\tfrac{1}{2}\|a_M(x;\bar\mu)-a_M(x;\bar\nu)\|_F\,\|D^2\varphi\|_{F,\infty}\\
&\le\|\nabla\varphi\|_\infty L(r_M(r))W_2(\bar\mu,\bar\nu)\\
&\quad+C(r_M(r))L(r_M(r))(1+\|x\|)W_2(\bar\mu,\bar\nu)\|D^2\varphi\|_{F,\infty}\\
&=\Bigl(\|\nabla\varphi\|_\infty L(r_M(r))+C(r_M(r))L(r_M(r))(1+\|x\|)\|D^2\varphi\|_{F,\infty}\Bigr)W_2(\bar\mu,\bar\nu)\\
&=:C_{T_1}(r,\varphi)(1+\|x\|)W_2(\bar\mu,\bar\nu).
\end{align*}
Integrating against $\mu$ and using $\int(1+\|x\|)\,\mu(dx)\le 2\int(1+\|x\|^3)\,\mu(dx)\le 2r_M(r)$, since $\|x\|\le 1+\|x\|^3$ implies that $1+\|x\|\le 2(1+\|x\|^3)$, we obtain
\begin{align}\label{eq:app-mut-T1-W2}
\int|T_1|\,\mu(dx)=\int|F(x;\bar\mu)-F(x;\bar\nu)|\,d\mu(x)\le 2C_{T_1}(r,\varphi)\,r_M(r)\,W_2(\bar\mu,\bar\nu).
\end{align}
By combining \eqref{eq:app-mut-T1-W2} and \eqref{eq:app-mut-T2-BL}, we obtain a bound for \eqref{eq:app-mut-G-diff} as
\begin{align*}
|G_M[\mu](\varphi)-G_M[\nu](\varphi)|
&\le\int|T_1|\,\mu(dx)+\left|\int F(x;\bar\nu)\,(\mu-\nu)(dx)\right|\\
&\le 2C_{T_1}(r,\varphi)\,r_M(r)\,W_2(\bar\mu,\bar\nu)+C_F(r,\varphi)\,d_{\mathrm{BL},3}(\mu,\nu)\\
&\le\bigl(2C_{T_1}(r,\varphi)r_M(r)+C_F(r,\varphi)\bigr)\bigl(d_{\mathrm{BL},3}(\mu,\nu)+W_2(\bar\mu,\bar\nu)\bigr)\\
&=:L_{M,\varphi}(r)\,d_{3,2}^*(\mu,\nu),
\end{align*}
which establishes~(A4) and completes the proof.
\end{proof}

\subsubsection{Selection operator: pre-generator verification}\label{app:selection}

\begin{proposition}[Selection pre-generator and regularity]\label{prop:app-selection-gen}
Suppose Assumption~\ref{ass:bounded-pressure-app}(i) holds, and let $r<\infty$ and $\varphi\in\mathcal D$. For any $\mu\in\M_{q,+}^r$ with $\mu(\X)>0$, define
\[
\bar\Phi(\mu):=\frac{1}{\mu(\X)}\int_\X\Phi(x;\mu)\,\mu(dx),
\]
and set $\M_{q,+}^{r,m}:=\{\mu\in\M_{q,+}^r:\mu(\X)\ge m\}$ for $m>0$ (with $m=1$ when $\mu\in\P_q(\X)$). Then the following statements hold.
\begin{enumerate}[label=\textup{(\roman*)},leftmargin=*,itemsep=2pt]
\item \emph{Pre-generator (A1).} The pre-generators for the balanced and unbalanced selection operators, $S_\tau^{\rm bal}$ and $S_\tau$, are given by
\begin{align}
G_S^{\rm bal}[\mu](\varphi)
&:=\lim_{\tau\downarrow 0}\frac{\langle\varphi,S_\tau^{\rm bal}[\mu]\rangle-\langle\varphi,\mu\rangle}{\tau}
=-\int_\X(\Phi(x;\mu)-\bar\Phi(\mu))\varphi(x)\,\mu(dx),\label{eq:app-selection-generator}\\
G_S[\mu](\varphi)
&:=\lim_{\tau\downarrow 0}\frac{\langle\varphi,S_\tau[\mu]\rangle-\langle\varphi,\mu\rangle}{\tau}
=-\int_\X\Phi(x;\mu)\varphi(x)\,\mu(dx),\nonumber
\end{align}
with convergence uniform on $\M_{q,+}^r$.

\item \emph{Moment stability (A2).} For all $\tau\in(0,1]$ and all $\mu\in\M_{q,+}^r$,
\[
S_\tau^{\rm bal}[\mu]\in\M_{q,+}^{r_{\rm bal}},\quad
S_\tau^{\rm bal}[\mu](\X)=\mu(\X)\quad\text{for }r_{\rm bal}:=e^{2C_\Phi(r)}r,
\]
and $S_\tau[\mu]\in\M_{q,+}^{r'}$ for $r':=e^{C_\Phi(r)}r$.

\item \emph{Near-identity in $d_{q,2}^*$ (A3).} Suppose either $\mu\in\P_q(\X)$ or $\mu\in\M_{q,+}^{r,m}$ for some $m>0$. There exists $C_S(r,m)<\infty$ such that for all $\tau\in(0,1]$,
\[
d_{q,2}^*(S_\tau^{\rm bal}[\mu],\mu)\le C_S(r,m)\,\tau^{1/2}.
\]
If $\mu\in\P_q(\X)$, this holds with $m=1$. Since $\overline{S_\tau[\mu]}=\overline{S_\tau^{\rm bal}[\mu]}$, the $W_2$-part of $d_{q,2}^*$ is identical for $S_\tau$ and $S_\tau^{\rm bal}$.

\item \emph{Local Lipschitz continuity of $G_S^{\rm bal}$ (A4).} Suppose additionally Assumption~\ref{ass:bounded-pressure-app}(ii) and (iii) hold, and let $\mu,\nu$ be as in~(iii). For every fixed $\varphi\in\mathcal D$ there exists $L_{S,\varphi}(r,m)<\infty$ such that
\[
|G_S^{\rm bal}[\mu](\varphi)-G_S^{\rm bal}[\nu](\varphi)|\le L_{S,\varphi}(r,m)\,d_{q,2}^*(\mu,\nu).
\]
For the unbalanced operator, the same estimate holds on $\M_{q,+}^r$ with a constant $L_{S,\varphi}(r)$ independent of the mass lower bound $m$.
\end{enumerate}
\end{proposition}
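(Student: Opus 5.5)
The plan is to reduce everything to the elementary bounds $e^{-\tau C}\le s_\tau(x;\mu)\le e^{\tau C}$, with $C:=C_\Phi(r)$, which hold uniformly on $\M_{q,+}^r$ by Assumption~\ref{ass:bounded-pressure-app}(i). Write $Z_\tau(\mu):=S_\tau[\mu](\X)/\mu(\X)=\int s_\tau\,d\bar\mu$, so that $S_\tau^{\rm bal}[\mu]=Z_\tau^{-1}S_\tau[\mu]$ and $e^{-\tau C}\le Z_\tau\le e^{\tau C}$.

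\emph{Part (i).} For the unbalanced operator I will use $|e^{-\tau\Phi}-1+\tau\Phi|\le \tfrac12\tau^2C^2e^{\tau C}$. Integrating against $\varphi\,\mu$ gives
\[
\bigl|\tau^{-1}\langle\varphi,S_\tau[\mu]-\mu\rangle+\int\Phi\varphi\,d\mu\bigr|\le \tfrac12\tau C^2e^{C}\|\varphi\|_\infty\,r,
\]
which is uniform on the sublevel. For the balanced operator I write
\[
\langle\varphi,S_\tau^{\rm bal}[\mu]\rangle-\langle\varphi,\mu\rangle=Z_\tau^{-1}\bigl(\langle\varphi,S_\tau[\mu]\rangle-\langle\varphi,\mu\rangle\bigr)+(Z_\tau^{-1}-1)\langle\varphi,\mu\rangle.
\]
The same Taylor bound gives $Z_\tau=1-\tau\bar\Phi(\mu)+O(\tau^2)$ uniformly, hence $Z_\tau^{-1}-1=\tau\bar\Phi+O(\tau^2)$. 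Combining the two terms yields $-\int(\Phi-\bar\Phi)\varphi\,d\mu$ with an $O(\tau)$ error uniform on $\M_{q,+}^r$.

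\emph{Part (ii).} Moment stability is immediate because $s_\tau\le e^{\tau C}$ and $Z_\tau^{-1}\le e^{\tau C}$. This gives $\int w_q\,dS_\tau[\mu]\le e^{C}r$ and $\int w_q\,dS_\tau^{\rm bal}[\mu]\le e^{2C}r$ for $\tau\le1$. The balanced variant preserves mass by construction.

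\emph{Part (iii).} Here the two components of $d_{q,2}^*$ are handled separately.
\begin{itemize}
\item For $d_{\mathrm{BL},q}$: any $f$ with $\|f\|_{\mathrm{BL},q}\le1$ satisfies $|f|\le w_q$. Then
\[
|\langle f,S_\tau^{\rm bal}[\mu]-\mu\rangle|\le\int|Z_\tau^{-1}s_\tau-1|\,w_q\,d\mu\le (e^{2\tau C}-1)r=O(\tau).
\]
This is in fact better than $\tau^{1/2}$.
\item For the Wasserstein part: $\overline{S_\tau^{\rm bal}[\mu]}$ has density $g:=Z_\tau^{-1}s_\tau$ with respect to $\bar\mu$, and $|g-1|\le e^{2\tau C}-1=O(\tau)$. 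I will bound $W_2$ between $\bar\mu$ and $g\bar\mu$ by the coupling that keeps the common mass $\min(g,1)\bar\mu$ in place and transports the remainder arbitrarily (for instance the product coupling of the normalized excess and deficit parts). This gives
\[
W_2^2\le \int|g-1|\,d\bar\mu\cdot O\bigl(\textstyle\int\|x\|^2\,d\bar\mu\bigr)=O(\tau),
\]
hence $W_2=O(\tau^{1/2})$. The second-moment bound on $\bar\mu$ is where the mass lower bound $m$ enters, via $\int\|x\|^2\,d\bar\mu\le 2r/m$.
\item The identity $\overline{S_\tau[\mu]}=\overline{S_\tau^{\rm bal}[\mu]}$ is trivial.
\end{itemize}

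\emph{Part (iv).} This is the main obstacle. I will split
\[
G_S^{\rm bal}[\mu](\varphi)-G_S^{\rm bal}[\nu](\varphi)=-\int\bigl(\Phi(\cdot;\mu)-\Phi(\cdot;\nu)\bigr)\varphi\,d\mu-\int\Phi(\cdot;\nu)\varphi\,d(\mu-\nu)+\bigl(\bar\Phi(\mu)\langle\varphi,\mu\rangle-\bar\Phi(\nu)\langle\varphi,\nu\rangle\bigr).
\]
\begin{itemize}
\item The first term is bounded by $L_\Phi\|\varphi\|_\infty r\,W_2(\bar\mu,\bar\nu)$, using Assumption~\ref{ass:bounded-pressure-app}(ii).
\item For the second term, I will check that $\Phi(\cdot;\nu)\varphi$ has finite $\|\cdot\|_{\mathrm{BL},q}$ norm. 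The weighted sup part is bounded by $C\|\varphi\|_\infty$. For the weighted Lipschitz part I combine Assumption~\ref{ass:bounded-pressure-app}(iii) with the Lipschitz constant of $\varphi$; the product rule gives a modulus of $\|x-y\|(1+\|x\|^{q-1}+\|y\|^{q-1})$ times constants. The term is then at most a constant times $d_{\mathrm{BL},q}(\mu,\nu)$.
\item For the last term I write $\bar\Phi(\mu)=\mu(\X)^{-1}\int\Phi(\cdot;\mu)\,d\mu$ and expand the difference of products into differences of masses, of pressures, and of integrals. Each piece is controlled by $d_{\mathrm{BL},q}$ (testing against $1$, against $\varphi$, and against $\Phi(\cdot;\nu)$) or by $W_2$ via (ii). The factor $1/\mu(\X)$ is where the constant depends on $m$.
\end{itemize}
For the unbalanced operator only the first two terms appear, so there is no dependence on $m$. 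The care needed is mainly in confirming the BL-norm bound on $\Phi\varphi$ and in tracking the $1/m$ factors.
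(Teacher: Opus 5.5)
Your proposal is correct and follows essentially the same route as the paper: Taylor expansion of $e^{-\tau\Phi}$ and of $Z_\tau^{-1}$ for (i), the pointwise bounds $e^{-2\tau C}\le Z_\tau^{-1}s_\tau\le e^{2\tau C}$ for (ii) and the BL part of (iii), a mixture coupling that leaves most of the mass in place for the $W_2$ part (the paper uses the floor decomposition $\bar\mu_\tau=(1-\epsilon_\tau)\bar\mu+\epsilon_\tau\pi_\tau^{\rm res}$ with $\epsilon_\tau=1-e^{-2\tau C}$, while you use the common-mass coupling), and the same three-term split in (iv). Two small remarks: your coupling actually gives $W_2^2\le 2\|g-1\|_\infty\int\|x\|^2\,\bar\mu(dx)$ rather than a product with $\int|g-1|\,d\bar\mu$, which is harmless since you already have the uniform bound on $g-1$; and your choice to test $\Phi(\cdot;\nu)$ against $d_{\mathrm{BL},q}$ via Assumption~\ref{ass:bounded-pressure-app}(iii) is more careful than the paper's $D_1$ step, which bounds $\int\Phi(\cdot;\nu)\,d(\mu-\nu)$ by $C_\Phi(r)\|1\|_{\mathrm{BL},q}\,d_{\mathrm{BL},q}(\mu,\nu)$ using boundedness of $\Phi$ alone.
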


\begin{proof}
Throughout, fix $r<\infty$, $\mu\in\M_{q,+}^r$, and $\varphi\in\mathcal D$.

\smallskip
\noindent\emph{Part (i): Pre-generator (A1).}
Define
\[
N_\tau(\mu;\varphi):=\int_\X\varphi(x)s_\tau(x;\mu)\,\mu(dx),\qquad
Z_\tau(\mu):=\frac{1}{\mu(\X)}\int_\X s_\tau(x;\mu)\,\mu(dx),
\]
so that $\langle\varphi,S_\tau^{\rm bal}[\mu]\rangle=N_\tau(\mu;\varphi)/Z_\tau(\mu)$.

By Assumption~\ref{ass:bounded-pressure-app}(i), on $\M_{q,+}^r$ we have $|\Phi(x;\mu)|\le C_\Phi(r)$. Hence, for fixed $(x,\mu)$, we can apply the second-order Taylor expansion of $t\mapsto e^{-t\Phi(x;\mu)}$ at $t=0$ evaluated at $t=\tau$ to obtain
\[
e^{-\tau\Phi(x;\mu)}=1-\tau\Phi(x;\mu)+r_\tau(x;\mu),
\]
with the remainder in Lagrange form
\[
r_\tau(x;\mu)=\frac{\tau^2}{2}\Phi(x;\mu)^2 e^{-\theta\tau\Phi(x;\mu)}\quad\text{for some }\theta\in(0,1).
\]
For any $\tau\in(0,1]$ it holds that
\[
|r_\tau(x;\mu)|\le C_1(r)\tau^2\quad\text{uniformly in }x\in\X,\mu\in\M_{q,+}^r,
\]
where $C_1(r):=\tfrac{1}{2}C_\Phi(r)^2 e^{C_\Phi(r)}$.

Using the above, we can proceed by writing an expansion for $N_\tau$ as
\begin{align}\label{eq:app-sel-Ntau-expansion}
N_\tau(\mu;\varphi)
&=\int_\X\varphi(x)s_\tau(x;\mu)\,\mu(dx)
=\int_\X\varphi(x)\bigl(1-\tau\Phi(x;\mu)+r_\tau(x;\mu)\bigr)\,\mu(dx)\nonumber\\
&=\langle\varphi,\mu\rangle-\tau\int_\X\varphi(x)\Phi(x;\mu)\,\mu(dx)+R_N(\mu;\varphi,\tau),
\end{align}
where $R_N(\mu;\varphi,\tau):=\int_\X\varphi(x)r_\tau(x;\mu)\,\mu(dx)$. Since $\varphi$ is bounded, $\|\varphi\|_\infty<\infty$, we obtain
\[
|R_N(\mu;\varphi,\tau)|\le\|\varphi\|_\infty C_1(r)\tau^2\mu(\X)\le\|\varphi\|_\infty C_1(r)\tau^2 r,
\]
as $\mu\in\M_{q,+}^r$ by assumption. Likewise, for $Z_\tau$, we get the expansion
\[
Z_\tau(\mu)=\frac{1}{\mu(\X)}\int_\X s_\tau(x;\mu)\,\mu(dx)=1-\tau\bar\Phi(\mu)+R_Z(\mu;\tau),
\]
where $R_Z(\mu;\tau):=\frac{1}{\mu(\X)}\int_\X r_\tau(x;\mu)\,\mu(dx)$ with bound $|R_Z(\mu;\tau)|\le C_1(r)\tau^2$ uniformly in $\mu\in\M_{q,+}^r$.

Let us next find an expansion for the inverse $1/Z_\tau(\mu)$ around $1$. As above, write $Z_\tau(\mu)=1-a_\tau(\mu)$ with $a_\tau(\mu)=\tau\bar\Phi(\mu)-R_Z(\mu;\tau)$. Then
\[
|a_\tau(\mu)|\le\tau C_\Phi(r)+C_1(r)\tau^2.
\]
Choose $\tau_0>0$ so that $|a_\tau(\mu)|\le 1/2$ for all $\tau\in(0,\tau_0]$ and $\mu\in\M_{q,+}^r$. Then, by Taylor's theorem for $h(b)=(1-b)^{-1}$ around $0$, we get $h(b)=h(0)+h'(0)b+R_h(b)$ with $R_h(b)=h''(\xi_\tau(\mu))b^2/2$ for some $\xi_\tau(\mu)\in(0,a_\tau(\mu))$, where $h'(b)=(1-b)^{-2}$ and $h''(b)=2(1-b)^{-3}$. Hence we obtain
\begin{equation}\label{eq:app-sel-Ztauinv-expansion}
\frac{1}{Z_\tau(\mu)}=1+a_\tau(\mu)+R_h(a_\tau(\mu))=1+\tau\bar\Phi(\mu)-R_Z(\mu;\tau)+R_h(a_\tau(\mu)).
\end{equation}
Since $|a_\tau(\mu)|\le 1/2$ implies $|1-\xi_\tau(\mu)|\ge 1/2$, we get $|h''(\xi_\tau(\mu))|\le 2/(1-|\xi_\tau(\mu)|)^3\le 2\cdot 2^3=16$. Then,
\[
|R_h(a_\tau(\mu))|=\Bigl|\frac{h''(\xi_\tau(\mu))}{2}a_\tau(\mu)^2\Bigr|\le 8\,a_\tau(\mu)^2\le 8(C_\Phi(r)+C_1(r)\tau_0)^2\tau^2=:C_2(r)\tau^2.
\]
For the combined remainder $\widetilde R_{Z^{-1}}(\mu;\tau):=R_h(a_\tau(\mu))-R_Z(\mu;\tau)$, we obtain
\[
|\widetilde R_{Z^{-1}}(\mu;\tau)|\le|R_h(a_\tau(\mu))|+|R_Z(\mu;\tau)|\le C_2(r)\tau^2+C_1(r)\tau^2=:C_3(r)\tau^2
\]
uniformly in $\mu\in\M_{q,+}^r$.

We now combine~\eqref{eq:app-sel-Ntau-expansion} and~\eqref{eq:app-sel-Ztauinv-expansion} to obtain
\[
\frac{N_\tau(\mu;\varphi)}{Z_\tau(\mu)}=\bigl(\langle\varphi,\mu\rangle-\tau\langle\varphi\Phi,\mu\rangle+R_N(\mu;\varphi,\tau)\bigr)\bigl(1+\tau\bar\Phi(\mu)+\widetilde R_{Z^{-1}}(\mu;\tau)\bigr).
\]
To simplify the expression, we define $R_{\rm sel}$ as the collection of $O(\tau^2)$ terms of the form
\[
\tau^2\bar\Phi\langle\varphi\Phi,\mu\rangle,\quad R_N,\quad R_N\tau\bar\Phi,\quad R_N\widetilde R_{Z^{-1}},\quad\langle\varphi,\mu\rangle\widetilde R_{Z^{-1}},
\]
each bounded by a constant times $\|\varphi\|_\infty\tau^2$ uniformly on $\M_{q,+}^r$. Then
\[
\langle\varphi,S_\tau^{\rm bal}[\mu]\rangle=\langle\varphi,\mu\rangle-\tau(\langle\varphi\Phi,\mu\rangle-\bar\Phi(\mu)\langle\varphi,\mu\rangle)+R_{\rm sel}(\mu;\varphi,\tau),
\]
where $|R_{\rm sel}(\mu;\varphi,\tau)|\le C_4(r)\|\varphi\|_\infty\tau^2$ uniformly on $\mu\in\M_{q,+}^r$. Therefore
\[
\frac{\langle\varphi,S_\tau^{\rm bal}[\mu]\rangle-\langle\varphi,\mu\rangle}{\tau}=-\int_\X(\Phi(x;\mu)-\bar\Phi(\mu))\varphi(x)\,\mu(dx)+\frac{R_{\rm sel}(\mu;\varphi,\tau)}{\tau}.
\]
Since $|R_{\rm sel}(\mu;\varphi,\tau)|/\tau\le C_4(r)\|\varphi\|_\infty\tau\to 0$ uniformly in $\mu\in\M_{q,+}^r$, this proves~\eqref{eq:app-selection-generator} together with the uniform weak-$*$ convergence required by the pre-generator definition.

The proof in the case of the unbalanced operator $S_\tau$ is obtained without ratio arguments. For $\mu\in\M_{q,+}^r$,
\[
\langle\varphi,S_\tau[\mu]\rangle=\int_\X\varphi(x)e^{-\tau\Phi(x;\mu)}\,\mu(dx),
\]
so that
\[
\frac{\langle\varphi,S_\tau[\mu]\rangle-\langle\varphi,\mu\rangle}{\tau}=\int_\X\varphi(x)\frac{e^{-\tau\Phi(x;\mu)}-1}{\tau}\,\mu(dx)\to-\int_\X\varphi(x)\Phi(x;\mu)\,\mu(dx),
\]
and boundedness of $\Phi$ on $\M_{q,+}^r$ gives uniformity, which establishes~(A1).

\smallskip
\noindent\emph{Part (ii): Moment stability (A2).}
By Assumption~\ref{ass:bounded-pressure-app}(i), there exists $C_\Phi(r)<\infty$ such that $|\Phi(x;\mu)|\le C_\Phi(r)$ for all $x\in\X$ and $\mu\in\M_{q,+}^r$. Then, if $\tau\le 1$,
\begin{equation}\label{eq:app-sel-s-tau-bound}
e^{-\tau C_\Phi(r)}\le s_\tau(x;\mu)\le e^{\tau C_\Phi(r)},
\end{equation}
and integrating gives
\[
e^{-\tau C_\Phi(r)}\mu(\X)\le\int_\X s_\tau(x;\mu)\,\mu(dx)\le e^{\tau C_\Phi(r)}\mu(\X).
\]
Thus, for the balanced operator $S_\tau^{\rm bal}[\mu](dx)=s_\tau(x;\mu)\mu(dx)/Z_\tau(\mu)$, the scaling term satisfies $Z_\tau(\mu)\in[e^{-\tau C_\Phi(r)},e^{\tau C_\Phi(r)}]$. Let us define $g_\tau(x;\mu):=s_\tau(x;\mu)/Z_\tau(\mu)$. Combining the bounds on $s_\tau$ and $Z_\tau$ gives
\begin{equation}\label{eq:app-sel-g-tau-bound}
e^{-2\tau C_\Phi(r)}\le g_\tau(x;\mu)\le e^{2\tau C_\Phi(r)}\qquad\forall x\in\X.
\end{equation}
Therefore,
\[
\int_\X w_q\,dS_\tau^{\rm bal}[\mu]=\int_\X w_q(x)g_\tau(x;\mu)\,\mu(dx)\le e^{2\tau C_\Phi(r)}\int_\X w_q\,d\mu\le e^{2C_\Phi(r)}r,
\]
which yields moment stability with $r_{\rm bal}:=e^{2C_\Phi(r)}r$. Mass preservation is immediate from the definition of the balanced operator.

For the unbalanced operator $S_\tau[\mu](dx)=s_\tau(x;\mu)\mu(dx)$, the pointwise bound~\eqref{eq:app-sel-s-tau-bound} gives for $\tau\le 1$
\[
\int_\X w_q\,dS_\tau[\mu]=\int_\X w_q(x)s_\tau(x;\mu)\,\mu(dx)\le e^{\tau C_\Phi(r)}\int_\X w_q\,d\mu\le e^{C_\Phi(r)}r.
\]
Hence we can choose $r'(r):=e^{C_\Phi(r)}r$ for the unbalanced operator, which completes the proof of~(A2).

\smallskip
\noindent\emph{Part (iii): Near-identity in $d_{q,2}^*$ (A3).}
We check the claim in two parts by considering the $d_{\mathrm{BL},q}$ and $W_2$ components separately.

\emph{(iii.a) BL component.} Let $f$ satisfy $\|f\|_{\mathrm{BL},q}\le 1$. By definition of $\|f\|_{0,q}$, we have $|f(x)|\le w_q(x)$ for all $x\in\X$. Write $S_\tau^{\rm bal}[\mu](dx)=g_\tau(x;\mu)\,\mu(dx)$ with $g_\tau=s_\tau/Z_\tau$ as in~(ii). Then, by~\eqref{eq:app-sel-g-tau-bound},
\[
\sup_x|g_\tau(x;\mu)-1|\le e^{2\tau C_\Phi(r)}-1\le 2\tau C_\Phi(r)e^{2C_\Phi(r)}=:K_1(r)\tau,\quad\tau\le 1.
\]
Hence, if $\mu\in\M_{q,+}^r$,
\[
|\langle f,S_\tau^{\rm bal}[\mu]-\mu\rangle|\le\sup_{x\in\X}|g_\tau(x;\mu)-1|\int_\X|f|\,d\mu\le K_1(r)\tau\int_\X w_q\,d\mu\le K_1(r)\,r\,\tau,
\]
and taking the supremum over $\|f\|_{\mathrm{BL},q}\le 1$ gives
\begin{equation}\label{eq:app-sel-part-BL}
d_{\mathrm{BL},q}(S_\tau^{\rm bal}[\mu],\mu)\le K_1(r)r\tau,
\end{equation}
uniformly on $\M_{q,+}^r$.

\emph{(iii.b) Wasserstein component.} Assume $\mu\in\M_{q,+}^{r,m}$ with $q\ge 2$, set $m_\mu:=\mu(\X)\ge m$ and $\bar\mu:=\mu/m_\mu\in\P_q(\X)$. Write
\[
S_\tau^{\rm bal}[\mu](dx)=g_\tau(x;\mu)\,\mu(dx),\qquad
\bar\mu_\tau:=\overline{S_\tau^{\rm bal}[\mu]}.
\]
Since $S_\tau^{\rm bal}$ preserves mass, $\bar\mu_\tau$ is given by
\[
\bar\mu_\tau(dx)=g_\tau(x;\mu)\,\bar\mu(dx),\qquad
\int_\X g_\tau(x;\mu)\,\bar\mu(dx)=1.
\]
By bounded pressure, for $\tau\le 1$ we have the uniform bounds
\[
e^{-2\tau C_\Phi(r)}\le g_\tau(x;\mu)\le e^{2\tau C_\Phi(r)}\qquad\forall x\in\X.
\]
Define $\epsilon_\tau:=1-e^{-2\tau C_\Phi(r)}\in(0,1)$ and the non-negative measure on $\X$ by
\[
\pi_\tau^{\rm res}(dx):=\frac{g_\tau(x;\mu)-e^{-2\tau C_\Phi(r)}}{\epsilon_\tau}\,\bar\mu(dx).
\]
Then $\pi_\tau^{\rm res}\in\P(\X)$ and the normalized update admits the convex decomposition
\[
\bar\mu_\tau=(1-\epsilon_\tau)\bar\mu+\epsilon_\tau\pi_\tau^{\rm res}.
\]
Construct a coupling $(X,Y)$ of $(\bar\mu,\bar\mu_\tau)$ as follows: sample $X\sim\bar\mu$, sample $Z\sim\pi_\tau^{\rm res}$ independently, sample $B\sim\mathrm{Bernoulli}(\epsilon_\tau)$ independently, and set $Y:=X$ if $B=0$ and $Y:=Z$ if $B=1$. Then $Y\sim\bar\mu_\tau$ and
\[
W_2^2(\bar\mu,\bar\mu_\tau)\le\E\|X-Y\|^2=\epsilon_\tau\,\E\|X-Z\|^2\le 2\epsilon_\tau\bigl(\E\|X\|^2+\E\|Z\|^2\bigr).
\]
Moreover, by the upper bound on $g_\tau$,
\begin{align*}
\frac{d\pi_\tau^{\rm res}}{d\bar\mu}(x)
&=\frac{g_\tau(x;\mu)-e^{-2\tau C_\Phi(r)}}{\epsilon_\tau}\le\frac{e^{2\tau C_\Phi(r)}-e^{-2\tau C_\Phi(r)}}{1-e^{-2\tau C_\Phi(r)}}\\
&=1+e^{2\tau C_\Phi(r)}\le 1+e^{2C_\Phi(r)}=:C_g(r),
\end{align*}
hence $\E\|Z\|^2\le C_g(r)\E\|X\|^2$. Since $q\ge 2$,
\[
\E\|X\|^2=\int_\X\|x\|^2\,\bar\mu(dx)\le\int_\X w_q(x)\,\bar\mu(dx)=\frac{1}{m_\mu}\int_\X w_q\,d\mu\le\frac{r}{m}.
\]
Therefore
\[
W_2(\bar\mu,\overline{S_\tau^{\rm bal}[\mu]})=W_2(\bar\mu,\bar\mu_\tau)\le C_S(r,m)\sqrt{\epsilon_\tau}\le\widetilde C_S(r,m)\sqrt\tau,
\]
for constants $C_S(r,m),\widetilde C_S(r,m)<\infty$. Combining with~\eqref{eq:app-sel-part-BL} yields
\[
d_{q,2}^*(S_\tau^{\rm bal}[\mu],\mu)\le C_S(r,m)\tau^{1/2}
\]
for $\tau\in(0,1]$. Because $\overline{S_\tau[\mu]}=\overline{S_\tau^{\rm bal}[\mu]}$, the same $W_2$ estimate holds for the unbalanced operator, which establishes~(A3).

\smallskip
\noindent\emph{Part (iv): Local Lipschitz continuity of $G_S^{\rm bal}$ (A4).}
By part~(i),
\[
G_S^{\rm bal}[\mu](\varphi)=-\int_\X(\Phi(x;\mu)-\bar\Phi(\mu))\varphi(x)\,\mu(dx),\qquad
\bar\Phi(\mu):=\frac{1}{\mu(\X)}\int_\X\Phi(x;\mu)\,\mu(dx).
\]
Let us now decompose the generator difference as
\begin{equation}\label{eq:app-selection-generator-diff}
G_S^{\rm bal}[\mu](\varphi)-G_S^{\rm bal}[\nu](\varphi)=-(I_\mu-I_\nu)+(J_\mu-J_\nu),
\end{equation}
where
\[
I_\mu:=\int_\X\Phi(x;\mu)\varphi(x)\,\mu(dx),\qquad
J_\mu:=\bar\Phi(\mu)\int_\X\varphi(x)\,\mu(dx)=\bar\Phi(\mu)\langle\varphi,\mu\rangle,
\]
and find bounds for $|I_\mu-I_\nu|$ and $|J_\mu-J_\nu|$ separately.

\emph{(iv.1) Bound on $I_\mu-I_\nu$.} Adding and subtracting $\int\Phi(x;\nu)\varphi(x)\,\mu(dx)$ gives
\[
I_\mu-I_\nu=\underbrace{\int_\X(\Phi(x;\mu)-\Phi(x;\nu))\varphi(x)\,\mu(dx)}_{=:A}+\underbrace{\int_\X\Phi(x;\nu)\varphi(x)(\mu-\nu)(dx)}_{=:B}.
\]
By Assumption~\ref{ass:bounded-pressure-app}(ii), the selection rate satisfies $|\Phi(x;\mu)-\Phi(x;\nu)|\le L_\Phi(r)W_2(\bar\mu,\bar\nu)$ for all $x\in\X$. Hence
\begin{align}\label{eq:app-sel-I-diff-A-part}
|A|&\le\int_\X|\Phi(x;\mu)-\Phi(x;\nu)|\,|\varphi(x)|\,\mu(dx)\le L_\Phi(r)W_2(\bar\mu,\bar\nu)\|\varphi\|_\infty\mu(\X)\nonumber\\
&\le\|\varphi\|_\infty r L_\Phi(r)W_2(\bar\mu,\bar\nu),
\end{align}
where the last inequality follows from noting that $\mu(\X)\le\int w_q\,d\mu\le r$ for $\mu\in\M_{q,+}^r$.

Next, considering the term $B$, we have by bounded pressure Assumption~\ref{ass:bounded-pressure-app}(i) that $|\Phi(x;\nu)|\le C_\Phi(r)$ for all $x$ and $\nu\in\M_{q,+}^r$. Thus the integrand $h_\nu(x):=\Phi(x;\nu)\varphi(x)$ is bounded by $C_\Phi(r)\|\varphi\|_\infty$. Moreover, $h_\nu\in\mathrm{BL}_q(\X)$ with
\[
\|h_\nu\|_{0,q}\le\sup_x\frac{|h_\nu(x)|}{w_q(x)}\le\sup_x|h_\nu(x)|\le C_\Phi(r)\|\varphi\|_\infty.
\]
Moreover, since $\Phi$ satisfies Assumptions~\ref{ass:bounded-pressure-app}(i) and (iii), and $\varphi$ is continuously differentiable by assumption and hence Lipschitz, we have for any $x,y\in\X$:
\begin{align*}
|h_\nu(x)-h_\nu(y)|&\le|\Phi(x;\nu)||\varphi(x)-\varphi(y)|+|\varphi(y)||\Phi(x;\nu)-\Phi(y;\nu)|\\
&\le C_\Phi(r)\|\nabla\varphi\|_\infty\|x-y\|+\|\varphi\|_\infty K_\Phi(r)\|x-y\|\bigl(1+\|x\|^{q-1}+\|y\|^{q-1}\bigr).
\end{align*}
As $1\le 1+\|x\|^{q-1}+\|y\|^{q-1}$, the previous display implies that
\[
[h_\nu]_{1,q-1}\le C_\Phi(r)\|\nabla\varphi\|_\infty+\|\varphi\|_\infty K_\Phi(r).
\]
Hence
\[
\|h_\nu\|_{\mathrm{BL},q}\le C_\Phi(r)\|\varphi\|_\infty+C_\Phi(r)\|\nabla\varphi\|_\infty+\|\varphi\|_\infty K_\Phi(r)=:C_h(r,\varphi).
\]
By the dual representation of $d_{\mathrm{BL},q}$, the bound for $B$ is given by
\begin{equation}\label{eq:app-sel-I-diff-B-part}
|B|=\Bigl|\int_\X h_\nu(x)(\mu-\nu)(dx)\Bigr|\le\|h_\nu\|_{\mathrm{BL},q}d_{\mathrm{BL},q}(\mu,\nu)\le C_h(r,\varphi)d_{\mathrm{BL},q}(\mu,\nu).
\end{equation}
Putting~\eqref{eq:app-sel-I-diff-A-part} and~\eqref{eq:app-sel-I-diff-B-part} together, we get
\begin{equation}\label{eq:app-selection-I-diff-bound}
|I_\mu-I_\nu|\le C_h(r,\varphi)d_{\mathrm{BL},q}(\mu,\nu)+\|\varphi\|_\infty r L_\Phi(r)W_2(\bar\mu,\bar\nu).
\end{equation}

\emph{(iv.2) Bound on $J_\mu-J_\nu$.} By adding and subtracting $\bar\Phi(\mu)\langle\varphi,\nu\rangle$, we can write
\begin{equation}\label{eq:app-sel-J-diff}
J_\mu-J_\nu=\bar\Phi(\mu)(\langle\varphi,\mu\rangle-\langle\varphi,\nu\rangle)+(\bar\Phi(\mu)-\bar\Phi(\nu))\langle\varphi,\nu\rangle=:C+D.
\end{equation}
First, $|\bar\Phi(\mu)|\le C_\Phi(r)$ because $\bar\Phi(\mu)$ is an average of $\Phi(\cdot;\mu)$ under the probability $\bar\mu$. Also $\varphi\in\mathrm{BL}_q(\X)$ with $\|\varphi\|_{\mathrm{BL},q}\le\|\varphi\|_\infty+\|\nabla\varphi\|_\infty$. Hence
\begin{align}\label{eq:app-sel-J-diff-part-C}
|C|&\le|\bar\Phi(\mu)||\langle\varphi,\mu-\nu\rangle|\le C_\Phi(r)\|\varphi\|_{\mathrm{BL},q}d_{\mathrm{BL},q}(\mu,\nu)\nonumber\\
&\le C_\Phi(r)(\|\varphi\|_\infty+\|\nabla\varphi\|_\infty)d_{\mathrm{BL},q}(\mu,\nu).
\end{align}
Next, by adding and subtracting terms, we can write $D$ in~\eqref{eq:app-sel-J-diff} via
\begin{align*}
\bar\Phi(\mu)-\bar\Phi(\nu)
&=\underbrace{\tfrac{1}{\mu(\X)}\Bigl(\int\Phi(x;\mu)\mu(dx)-\int\Phi(x;\nu)\nu(dx)\Bigr)}_{=:D_1}\\
&\quad+\underbrace{\Bigl(\tfrac{1}{\mu(\X)}-\tfrac{1}{\nu(\X)}\Bigr)\int\Phi(x;\nu)\nu(dx)}_{=:D_2}.
\end{align*}
To control $D_1$, we further split
\[
\int\Phi(x;\mu)\mu(dx)-\int\Phi(x;\nu)\nu(dx)=\int(\Phi(x;\mu)-\Phi(x;\nu))\mu(dx)+\int\Phi(x;\nu)(\mu-\nu)(dx).
\]
The first term is bounded by $L_\Phi(r)W_2(\bar\mu,\bar\nu)\mu(\X)$, the second term by $C_\Phi(r)\|1\|_{\mathrm{BL},q}d_{\mathrm{BL},q}(\mu,\nu)=C_\Phi(r)d_{\mathrm{BL},q}(\mu,\nu)$. Therefore
\[
\Bigl|\int\Phi(x;\mu)\mu(dx)-\int\Phi(x;\nu)\nu(dx)\Bigr|\le L_\Phi(r)\mu(\X)W_2(\bar\mu,\bar\nu)+C_\Phi(r)d_{\mathrm{BL},q}(\mu,\nu).
\]
Dividing by $\mu(\X)\ge m$ (or $\mu(\X)=1$ in the probability case) gives
\begin{equation}\label{eq:app-sel-D1-bound}
|D_1|\le L_\Phi(r)W_2(\bar\mu,\bar\nu)+\frac{C_\Phi(r)}{m}d_{\mathrm{BL},q}(\mu,\nu).
\end{equation}
Next, for $D_2$, since $|\int\Phi(x;\nu)\nu(dx)|\le C_\Phi(r)\nu(\X)$,
\begin{align}\label{eq:app-sel-D2-bound}
|D_2|&=\Bigl|\frac{\nu(\X)-\mu(\X)}{\mu(\X)\nu(\X)}\Bigr|\Bigl|\int\Phi(x;\nu)\nu(dx)\Bigr|\le\frac{|\mu(\X)-\nu(\X)|}{\mu(\X)}C_\Phi(r)\nonumber\\
&\le\frac{C_\Phi(r)}{m}|\mu(\X)-\nu(\X)|.
\end{align}
Then, by combining~\eqref{eq:app-sel-D1-bound} and~\eqref{eq:app-sel-D2-bound},
\[
|\bar\Phi(\mu)-\bar\Phi(\nu)|\le L_\Phi(r)W_2(\bar\mu,\bar\nu)+\frac{2C_\Phi(r)}{m}d_{\mathrm{BL},q}(\mu,\nu).
\]
Since $|\langle\varphi,\nu\rangle|\le\|\varphi\|_\infty\nu(\X)\le\|\varphi\|_\infty r$,
\begin{equation}\label{eq:app-sel-J-diff-part-D}
|D|\le|\bar\Phi(\mu)-\bar\Phi(\nu)||\langle\varphi,\nu\rangle|\le\|\varphi\|_\infty r\Bigl(L_\Phi(r)W_2(\bar\mu,\bar\nu)+\frac{2C_\Phi(r)}{m}d_{\mathrm{BL},q}(\mu,\nu)\Bigr).
\end{equation}
Thus, combining~\eqref{eq:app-sel-J-diff-part-C} and~\eqref{eq:app-sel-J-diff-part-D} gives
\begin{align}\label{eq:app-selection-J-diff-bound}
|J_\mu-J_\nu|&\le|C|+|D|\le C_\Phi(r)(\|\varphi\|_\infty+\|\nabla\varphi\|_\infty)d_{\mathrm{BL},q}(\mu,\nu)\nonumber\\
&\qquad+\|\varphi\|_\infty r L_\Phi(r)W_2(\bar\mu,\bar\nu)+\frac{2\|\varphi\|_\infty r C_\Phi(r)}{m}d_{\mathrm{BL},q}(\mu,\nu).
\end{align}
Finally, collecting the bounds~\eqref{eq:app-selection-I-diff-bound} and~\eqref{eq:app-selection-J-diff-bound}, there exists a constant $L_{S,\varphi}(r,m)<\infty$ depending on $r$, $\varphi$ and $m$ such that
\[
|G_S^{\rm bal}[\mu](\varphi)-G_S^{\rm bal}[\nu](\varphi)|\le L_{S,\varphi}(r,m)\bigl(d_{\mathrm{BL},q}(\mu,\nu)+W_2(\bar\mu,\bar\nu)\bigr)=L_{S,\varphi}(r,m)d_{q,2}^*(\mu,\nu),
\]
which verifies~(A4) for the balanced operator.

For the unbalanced selection operator $S_\tau[\mu](dx)=s_\tau(x;\mu)\mu(dx)$, the pre-generator is
\[
G_S[\mu](\varphi)=-\int_\X\Phi(x;\mu)\varphi(x)\,\mu(dx).
\]
Unlike the balanced case, there is no normalization term $\bar\Phi(\mu)$ and hence no division by $\mu(\X)$. Consequently, on each moment sublevel $\M_{q,+}^r$ the Lipschitz estimate in $d_{q,2}^*$ follows directly from the decomposition
\[
G_S[\mu](\varphi)-G_S[\nu](\varphi)=-\int_\X(\Phi(x;\mu)-\Phi(x;\nu))\varphi(x)\,\mu(dx)-\int_\X\Phi(x;\nu)\varphi(x)(\mu-\nu)(dx),
\]
together with Assumption~\ref{ass:bounded-pressure-app}(ii) for the first term and the duality bound $|\langle h,\mu-\nu\rangle|\le\|h\|_{\mathrm{BL},q}d_{\mathrm{BL},q}(\mu,\nu)$ for the second term, where $h(x):=\Phi(x;\nu)\varphi(x)$ is uniformly bounded on $\M_{q,+}^r$ by bounded pressure. In particular, for every fixed $\varphi\in\mathcal D$ there exists $L_{S,\varphi}(r)<\infty$ such that
\[
|G_S[\mu](\varphi)-G_S[\nu](\varphi)|\le L_{S,\varphi}(r)d_{q,2}^*(\mu,\nu),\qquad\mu,\nu\in\M_{q,+}^r,
\]
and no lower mass bound $\mu(\X)\ge m$ is needed for the unbalanced pre-generator. This completes the proof.
\end{proof}

\subsubsection{Recombination operator: pre-generator verification}\label{app:recombination}

For $r<\infty$ and $m_0>0$, recall the collection $\M_{q,+}^{r,m_0}(\X):=\{\mu\in\M_{q,+}(\X):\mu(\X)\ge m_0,\ \int w_q\,d\mu\le r\}$, and similarly write $\P_q^r(\X):=\{\mu\in\P_q(\X):\int w_q\,d\mu\le r\}$ for the corresponding moment ball in the probability case.

\begin{proposition}[Recombination pre-generator and regularity]\label{prop:app-recomb-gen}
Fix $q\ge 2$ and $r<\infty$. Suppose Assumption~\ref{ass:recombination-section-ass} holds. If either $\mu\in\P_q^r(\X)$ or there exists $m_0>0$ such that $\mu\in\M_{q,+}^{r,m_0}(\X)$, then there exist constants $\tau_0(r)\in(0,1)$ and $r_R(r,m_0)\in(r,\infty)$ such that, for all $\tau\in(0,\tau_0(r)]$, the following statements hold (with the convention $r_R(r):=r_R(r,1)$ in the probability case).
\begin{enumerate}[label=\textup{(\roman*)},leftmargin=*,itemsep=2pt]
\item \emph{Pre-generator (A1).} For $\mu\in\M_{q,+}^r(\X)$ with $\mu(\X)>0$ and all $\varphi\in\mathcal D$,
\begin{equation}\label{eq:app-recomb-generator}
G_R[\mu](\varphi)
=\mu(\X)\,\beta(\bar\mu)\iint_{\X\times\X}\Bigl[\int_\X\varphi(z)\,p_b((x,y),dz)\Bigr]\overline{\Gamma^{\mu}}(dx,dy)-\int_\X\varphi(x)\,\mu(dx),
\end{equation}
with convergence exact (zero remainder) in $\tau$; that is,
\[
G_R[\mu](\varphi)=\lim_{\tau\downarrow 0}\frac{\langle\varphi,R_\tau[\mu]\rangle-\langle\varphi,\mu\rangle}{\tau}=\langle\varphi,R[\mu]\rangle-\langle\varphi,\mu\rangle.
\]
The balanced pre-generator $G_R^{\rm bal}$ is obtained by setting $\beta\equiv 1$.

\item \emph{Moment stability (A2).} For all $\mu\in\M_{q,+}^{r,m_0}(\X)$ we have $R_\tau[\mu]\in\M_{q,+}^{r_R(r,m_0)}(\X)$ and
\[
R_\tau[\mu](\X)=(1-\tau)\mu(\X)+\tau\beta(\bar\mu)\mu(\X)=\mu(\X)\bigl((1-\tau)+\tau\beta(\bar\mu)\bigr).
\]
In the balanced case ($\beta\equiv 1$, $\mu\in\P_q^r(\X)$), the same moment bound holds without any mass restriction, with $R_\tau^{\rm bal}[\bar\mu]\in\P_q^{r_R(r)}(\X)$.

\item \emph{Near-identity in $d_{q,2}^*$ (A3).} There exists $C_{R,\mathrm{BL}}(r,m_0)<\infty$ such that, for all $\mu\in\M_{q,+}^{r_R(r,m_0),m_0}(\X)$ and all $\tau\in(0,\tau_0(r)]$,
\[
d_{\mathrm{BL},q}(R_\tau[\mu],\mu)\le C_{R,\mathrm{BL}}(r,m_0)\,\tau.
\]
Moreover, for every $m_0>0$ there exists $C_{R,W}(r,m_0)<\infty$ such that
\[
W_2\bigl(\overline{R_\tau[\mu]},\bar\mu\bigr)\le C_{R,W}(r,m_0)\sqrt\tau,
\]
and therefore
\[
d_{q,2}^*(R_\tau[\mu],\mu)\le C_{R,\mathrm{BL}}(r,m_0)\,\tau+C_{R,W}(r,m_0)\sqrt\tau.
\]
In the balanced case, $\mu\in\P_q^{r_R(r)}(\X)$, the same bound holds with the same proof and with $\beta\equiv 1$.

\item \emph{Local Lipschitz continuity of $G_R$ (A4).} Suppose furthermore that Assumption~\ref{ass:recombination-section-ass}(ii) holds. Then, for every fixed $\varphi\in\mathcal D$, there exists $L_{R,\varphi}(r,m_0)<\infty$ such that, for all $\mu,\nu\in\M_{q,+}^{r_R(r,m_0),m_0}(\X)$,
\[
|G_R[\mu](\varphi)-G_R[\nu](\varphi)|\le L_{R,\varphi}(r,m_0)\,d_{q,2}^*(\mu,\nu).
\]
In the balanced case, $\mu,\nu\in\P_q^{r_R(r)}(\X)$, the same bound holds with the same proof and with $\beta\equiv 1$.
\end{enumerate}
\end{proposition}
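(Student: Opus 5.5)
The plan is to verify the four items in order, exploiting that $R_\tau$ is affine in $\tau$.

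\emph{(i) Pre-generator.} Since $R_\tau[\mu]=(1-\tau)\mu+\tau R[\mu]$, we have the exact identity
\[
\tau^{-1}\bigl(\langle\varphi,R_\tau[\mu]\rangle-\langle\varphi,\mu\rangle\bigr)=\langle\varphi,R[\mu]\rangle-\langle\varphi,\mu\rangle,
\]
with no remainder. Unfolding $R[\mu]=\beta(\bar\mu)R^{\rm bal}[\mu]$ and the definition of $R^{\rm bal}$ through the offspring kernel, via Fubini, gives \eqref{eq:app-recomb-generator}. Uniformity on $\M_{q,+}^r$ is then trivial. Finiteness uses $\|\varphi\|_\infty$ together with $R[\mu](\X)=\beta(\bar\mu)\mu(\X)\le\beta_r\,r$, where Assumption~\ref{ass:recombination-section-ass}(iv) is applied on the normalized ball of radius $r/m_0$.

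\emph{(ii) Moment stability.} The mass formula is immediate. For moments, integrate $w_q$ against $R^{\rm bal}[\mu]$ and use (iii) of the assumption together with the fact that both marginals of $\overline{\Gamma^\mu}$ equal $\bar\mu$:
\[
\int w_q\,dR^{\rm bal}[\mu]\le \mu(\X)\,C_b^{(q)}\iint\bigl(w_q(x)+w_q(y)\bigr)\,\overline{\Gamma^\mu}(dx,dy)=2C_b^{(q)}\int w_q\,d\mu.
\]
Hence $\int w_q\,dR_\tau[\mu]\le\bigl(1+2\beta_{r/m_0}C_b^{(q)}\bigr)r=:r_R(r,m_0)$ (taken larger than $r$).

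\emph{(iii) Near-identity.} For the BL part, with $\|f\|_{\mathrm{BL},q}\le1$ we have $|f|\le w_q$, so
\[
|\langle f,R_\tau[\mu]-\mu\rangle|=\tau\,|\langle f,R[\mu]-\mu\rangle|\le\tau\bigl(\beta\,2C_b^{(q)}+1\bigr)r_R.
\]
For the $W_2$ part, normalization gives the mixture
\[
\overline{R_\tau[\mu]}=(1-\epsilon)\bar\mu+\epsilon\,\overline{R^{\rm bal}[\mu]},\qquad \epsilon=\frac{\tau\beta}{1-\tau+\tau\beta}\le C\tau.
\]
I would reuse the Bernoulli coupling from the selection proof (keep $X$ with probability $1-\epsilon$, otherwise replace it by an independent offspring $Z$). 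This yields $W_2^2\le 2\epsilon(\E\|X\|^2+\E\|Z\|^2)$, and both second moments are bounded by $r_R/m_0$ times constants by (ii). This gives the $\sqrt\tau$ rate.

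\emph{(iv) Lipschitz continuity.} This is the main obstacle. I would write
\[
G_R[\mu](\varphi)=\mu(\X)\,\beta(\bar\mu)\,\E_{\overline{\Gamma^\mu}}[\Phi_\varphi(X,Y)]-\langle\varphi,\mu\rangle,\qquad \Phi_\varphi(x,y):=\E_\xi\,\varphi(b(x,y;\xi)).
\]
By (ii) of the assumption and Jensen, $\Phi_\varphi$ is bounded by $\|\varphi\|_\infty$ and Lipschitz with constant $\|\nabla\varphi\|_\infty\sqrt{L_b}$ in the product norm. Telescoping $\mu(\X)\beta(\bar\mu)E(\bar\mu)-\nu(\X)\beta(\bar\nu)E(\bar\nu)$ leaves three terms.
\begin{itemize}
\item The mass difference satisfies $|\mu(\X)-\nu(\X)|\le d_{\mathrm{BL},q}(\mu,\nu)$, testing with $f\equiv1$.
\item The amplitude difference is bounded by $L_\beta W_2(\bar\mu,\bar\nu)$.
\item The expectation difference satisfies $|E(\bar\mu)-E(\bar\nu)|\le \mathrm{Lip}(\Phi_\varphi)\,W_1(\overline{\Gamma^\mu},\overline{\Gamma^\nu})\le\mathrm{Lip}(\Phi_\varphi)\,L_\Lambda W_2(\bar\mu,\bar\nu)$ by Kantorovich duality and $W_1\le W_2$.
\end{itemize}
The linear term obeys $|\langle\varphi,\mu-\nu\rangle|\le(\|\varphi\|_\infty+\|\nabla\varphi\|_\infty)\,d_{\mathrm{BL},q}$. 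The prefactors $\mu(\X)\le r_R$, $\beta\le\beta_{r_R/m_0}$, and $|E|\le\|\varphi\|_\infty$ are bounded, so summing these pieces gives $L_{R,\varphi}(r,m_0)\,d_{q,2}^*(\mu,\nu)$. In the balanced case one sets $\beta\equiv1$ and the amplitude term disappears.
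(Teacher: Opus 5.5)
Your proposal is correct and follows the paper's proof essentially step by step: the exact affine-in-$\tau$ identity for (i), the marginal-compatibility moment bound for (ii), the $\tau$-scaling of $d_{\mathrm{BL},q}$ together with a mixture coupling for (iii), and the same three-term telescoping of $\mu(\X)\beta(\bar\mu)R^{\rm bal}[\bar\mu]$ for (iv). The differences are cosmetic and give the same rates. In (iii) you couple $\bar\mu$ with the offspring law independently rather than optimally inside the mixture. In (iv) you apply Kantorovich duality to the averaged test function $\Phi_\varphi$ on pair space, instead of first bounding $W_2(R^{\rm bal}[\bar\mu],R^{\rm bal}[\bar\nu])$ by a synchronous coupling. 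One small fix: choose $\tau_0=1/2$ explicitly, so that $\epsilon\le 2\beta\tau$ and the claim $\tau_0(r)\in(0,1)$ holds.
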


\begin{proof}
Throughout, fix $r<\infty$, $m_0>0$, $\tau\in(0,\tau_0(r)]$ with $\tau_0(r)=1/2$, and $\varphi\in\mathcal D$. For $\mu\in\M_{q,+}^{r,m_0}$ we write $\bar\mu:=\mu/\mu(\X)\in\P_q(\X)$ for the normalized population.

\smallskip
\noindent\emph{Part (i): Pre-generator (A1).}
By definition of $R_\tau$,
\[
\frac{\langle\varphi,R_\tau[\mu]\rangle-\langle\varphi,\mu\rangle}{\tau}
=\frac{(1-\tau)\langle\varphi,\mu\rangle+\tau\langle\varphi,R[\mu]\rangle-\langle\varphi,\mu\rangle}{\tau}
=\langle\varphi,R[\mu]\rangle-\langle\varphi,\mu\rangle,
\]
which is independent of $\tau$. Hence the limit is exact and equals $\langle\varphi,R[\mu]-\mu\rangle$. Using $R[\mu]=\beta(\bar\mu)\,R^{\rm bal}[\mu]$ together with Definition~\ref{def:instant-recombination-maps} and Fubini--Tonelli (applicable because $\varphi$ is bounded),
\[
\langle\varphi,R[\mu]\rangle=\mu(\X)\,\beta(\bar\mu)\iint_{\X\times\X}\Bigl[\int_\X\varphi(z)\,p_b((x,y),dz)\Bigr]\overline{\Gamma^{\mu}}(dx,dy),
\]
which gives~\eqref{eq:app-recomb-generator}. For $\mu=0$, $R[0]=0$ by convention, hence $G_R[0](\varphi)=0$. This establishes~(A1).

\smallskip
\noindent\emph{Part (ii): Moment stability (A2).}
The mass identity is immediate from~\eqref{eq:app-recomb-tau-step}, since $R[\mu](\X)=\beta(\bar\mu)\,\mu(\X)$:
\[
R_\tau[\mu](\X)=(1-\tau)\mu(\X)+\tau\beta(\bar\mu)\,\mu(\X)=\mu(\X)\bigl((1-\tau)+\tau\beta(\bar\mu)\bigr).
\]
For the $q$-moment, by linearity and positivity,
\[
\int_\X w_q\,dR_\tau[\mu]=(1-\tau)\int_\X w_q\,d\mu+\tau\int_\X w_q\,dR[\mu].
\]
The second term is estimated as follows. Using $R[\mu]=\beta(\bar\mu)\,\mu(\X)\,R^{\rm bal}[\bar\mu]$, Fubini--Tonelli, and Assumption~\ref{ass:recombination-section-ass}(iii),
\begin{align*}
\int_\X w_q\,dR^{\rm bal}[\bar\mu]
&=\iint_{\X\times\X}\E_\xi[w_q(b(x,y;\xi))]\,\overline{\Gamma^{\mu}}(dx,dy)\\
&\le C_b^{(q)}\iint(w_q(x)+w_q(y))\,\overline{\Gamma^{\mu}}(dx,dy)
=2C_b^{(q)}\int_\X w_q\,d\bar\mu,
\end{align*}
the last equality using that $\overline{\Gamma^\mu}$ has both marginals equal to $\bar\mu$ (Assumption~\ref{ass:recombination-section-ass}(i)). Now, if $\mu\in\M_{q,+}^{r,m_0}$, then
\[
\int_\X w_q\,d\bar\mu=\frac{1}{\mu(\X)}\int_\X w_q\,d\mu\le\frac{r}{m_0},
\]
and Assumption~\ref{ass:recombination-section-ass}(iv) yields $\beta(\bar\mu)\le\beta_{r/m_0}$. Combining the previous displays,
\begin{align*}
\int_\X w_q\,dR[\mu]
&=\beta(\bar\mu)\,\mu(\X)\int_\X w_q\,dR^{\rm bal}[\bar\mu]\le 2C_b^{(q)}\,\beta_{r/m_0}\int_\X w_q\,d\mu\\
&\le 2C_b^{(q)}\,\beta_{r/m_0}\,r=:C_R(r,m_0).
\end{align*}
Therefore, for $\tau\le 1$,
\[
\int_\X w_q\,dR_\tau[\mu]\le(1-\tau)\,r+\tau\,C_R(r,m_0)\le r_R(r,m_0):=\max\{r,C_R(r,m_0)\},
\]
and, since $\tau\le\tau_0(r)=1/2$, $R_\tau[\mu](\X)\ge m_0/2$. This proves moment stability on the mass-restricted sublevel. In the balanced case ($\beta\equiv 1$, $\mu\in\P_q^r(\X)$), the same moment bound holds without any mass restriction, with $R_\tau^{\rm bal}[\mu]\in\P_q^{r_R(r)}(\X)$.

\smallskip
\noindent\emph{Part (iii): Near-identity in $d_{q,2}^*$ (A3).}
We treat the $d_{\mathrm{BL},q}$ and $W_2$ components separately, with $\mu\in\M_{q,+}^{r_R(r,m_0),m_0}$ throughout.

\emph{BL component.} By part~(ii), and in particular by the bound established for $\int w_q\,dR[\mu]$ above, both $R[\mu]$ and $R_\tau[\mu]$ belong to $\M_{q,+}(\X)$ whenever $\mu\in\M_{q,+}(\X)$. For any $f\in\mathrm{BL}_q(\X)$ with $\|f\|_{\mathrm{BL},q}\le 1$,
\[
\langle f,R_\tau[\mu]-\mu\rangle=\langle f,(1-\tau)\mu+\tau R[\mu]-\mu\rangle=\tau\langle f,R[\mu]-\mu\rangle.
\]
Taking the supremum over $\|f\|_{\mathrm{BL},q}\le 1$ yields
\begin{equation}\label{eq:app-recomb-BL-tau-scaling}
d_{\mathrm{BL},q}(R_\tau[\mu],\mu)=\tau\,d_{\mathrm{BL},q}(R[\mu],\mu).
\end{equation}
Next we claim that on the moment sublevel $\M_{q,+}^{r_R(r,m_0),m_0}$, both $\mu$ and $R[\mu]$ have uniformly bounded $q$-moments and mass, hence $d_{\mathrm{BL},q}(R[\mu],\mu)\le C_{R,\mathrm{BL}}(r,m_0)$ for some constant depending only on the sublevel. To justify this, recall the weighted BL norm on test functions,
\[
\|f\|_{\mathrm{BL},q}=\|f\|_{0,q}+[f]_{1,q-1},\qquad\|f\|_{0,q}:=\sup_{x\in\X}\frac{|f(x)|}{w_q(x)},\qquad w_q(x)=1+\|x\|^q.
\]
If $\|f\|_{\mathrm{BL},q}\le 1$ then automatically $\|f\|_{0,q}\le 1$, hence $|f(x)|\le w_q(x)$ for all $x\in\X$, and so
\[
|\langle f,R[\mu]\rangle|\le\int_\X w_q\,dR[\mu]\quad\text{and}\quad|\langle f,\mu\rangle|\le\int_\X w_q\,d\mu.
\]
As $\mu\in\M_{q,+}^{r_R(r,m_0),m_0}$, we have $\int w_q\,d\mu\le r_R(r,m_0)$. Using Assumptions~\ref{ass:recombination-section-ass}(iii) and~\ref{ass:recombination-section-ass}(iv), and following the same steps as in part~(ii) but with $r$ replaced by $r_R(r,m_0)$, we get, since $\mu(\X)\ge m_0$,
\[
\int_\X w_q\,dR[\mu]\le\beta(\bar\mu)\cdot 2C_b^{(q)}\cdot r_R(r,m_0).
\]
Bounded amplitude on the corresponding moment ball gives $\beta(\bar\mu)\le\beta_{r_R(r,m_0)/m_0}$, so
\[
\int_\X w_q\,dR[\mu]\le 2C_b^{(q)}\,\beta_{r_R(r,m_0)/m_0}\,r_R(r,m_0)=:C_R^{(q)}(r,m_0).
\]
Therefore
\[
|\langle f,R[\mu]-\mu\rangle|\le\int_\X w_q\,dR[\mu]+\int_\X w_q\,d\mu\le C_R^{(q)}(r,m_0)+r_R(r,m_0)=:C_{R,\mathrm{BL}}(r,m_0).
\]
Taking the supremum over $\|f\|_{\mathrm{BL},q}\le 1$ yields $d_{\mathrm{BL},q}(R[\mu],\mu)\le C_{R,\mathrm{BL}}(r,m_0)$, and combining with~\eqref{eq:app-recomb-BL-tau-scaling},
\begin{equation}\label{eq:app-recomb-BL-bound}
d_{\mathrm{BL},q}(R_\tau[\mu],\mu)\le C_{R,\mathrm{BL}}(r,m_0)\,\tau.
\end{equation}

\emph{Wasserstein component.} Writing $\mu=\mu(\X)\,\bar\mu$ and using $R[\mu]=\beta(\bar\mu)\,R^{\rm bal}[\mu]$, we have $R_\tau[\mu]=(1-\tau)\,\mu(\X)\,\bar\mu+\tau\,\beta(\bar\mu)\,R^{\rm bal}[\mu]$, hence the normalized measure is the convex mixture
\[
\overline{R_\tau[\mu]}=\theta_\tau(\mu)\,\bar\mu+(1-\theta_\tau(\mu))\,R^{\rm bal}[\bar\mu],
\qquad
\theta_\tau(\mu):=\frac{1-\tau}{(1-\tau)+\tau\beta(\bar\mu)}.
\]
Let $\rho:=R^{\rm bal}[\bar\mu]$. Consider the coupling between $\bar\mu$ and $\theta\bar\mu+(1-\theta)\rho$ that, with probability $\theta$, couples $\bar\mu$ with itself by the identity, and, with probability $1-\theta$, couples $\bar\mu$ with $\rho$ by an optimal $W_2$ coupling. This gives the standard bound
\[
W_2^2(\theta\bar\mu+(1-\theta)\rho,\bar\mu)\le(1-\theta)\,W_2^2(\rho,\bar\mu).
\]
Applying with $\theta=\theta_\tau(\mu)$:
\[
W_2\bigl(\overline{R_\tau[\mu]},\bar\mu\bigr)\le\sqrt{1-\theta_\tau(\mu)}\,W_2(R^{\rm bal}[\bar\mu],\bar\mu).
\]
On $\M_{q,+}^{r_R(r,m_0),m_0}$, both $\bar\mu$ and $R^{\rm bal}[\bar\mu]$ have uniformly bounded second moments (by part~(ii) and $q\ge 2$), so $W_2(R^{\rm bal}[\bar\mu],\bar\mu)\le C(r,m_0)$. Moreover, $\beta(\bar\mu)\le\beta_{r_R(r,m_0)/m_0}$, and for $\tau\le 1/2$,
\[
1-\theta_\tau(\mu)=\frac{\tau\beta(\bar\mu)}{(1-\tau)+\tau\beta(\bar\mu)}\le\frac{\tau\,\beta_{r_R(r,m_0)/m_0}}{1-\tau}\le 2\beta_{r_R(r,m_0)/m_0}\,\tau.
\]
Therefore
\begin{equation}\label{eq:app-recomb-W2-bound}
W_2\bigl(\overline{R_\tau[\mu]},\bar\mu\bigr)\le C(r,m_0)\sqrt{2\beta_{r_R(r,m_0)/m_0}}\,\sqrt\tau=:C_{R,W}(r,m_0)\sqrt\tau.
\end{equation}
Combining \eqref{eq:app-recomb-BL-bound} and \eqref{eq:app-recomb-W2-bound} gives, for all $\tau\in(0,\tau_0(r)]$,
\[
d_{q,2}^*(R_\tau[\mu],\mu)\le C_{R,\mathrm{BL}}(r,m_0)\,\tau+C_{R,W}(r,m_0)\sqrt\tau,
\]
which establishes~(A3). In the balanced case ($\beta\equiv 1$, $\mu\in\P_q^{r_R(r)}(\X)$) the same argument applies without a mass restriction and yields $d_{q,2}^*(R_\tau^{\rm bal}[\bar\mu],\bar\mu)\le C_R(r)\sqrt\tau$.

\smallskip
\noindent\emph{Part (iv): Local Lipschitz continuity of $G_R$ (A4).}
Fix $\varphi\in\mathcal D$ and let $\mu,\nu\in\M_{q,+}^{r_R(r,m_0),m_0}$ with $\bar\mu:=\mu/\mu(\X)$ and $\bar\nu:=\nu/\nu(\X)$. Recalling that
\[
G_R[\mu](\varphi)=\langle\varphi,R[\mu]\rangle-\langle\varphi,\mu\rangle,
\qquad
R[\mu]=\mu(\X)\,\beta(\bar\mu)\,R^{\rm bal}[\bar\mu],
\]
we write
\[
G_R[\mu](\varphi)-G_R[\nu](\varphi)=\langle\varphi,\mu(\X)\beta(\bar\mu)R^{\rm bal}[\bar\mu]-\nu(\X)\beta(\bar\nu)R^{\rm bal}[\bar\nu]\rangle-\langle\varphi,\mu-\nu\rangle,
\]
so that $|G_R[\mu](\varphi)-G_R[\nu](\varphi)|\le T_1+T_2$, where
\[
T_1:=|\langle\varphi,\mu(\X)\beta(\bar\mu)R^{\rm bal}[\bar\mu]-\nu(\X)\beta(\bar\nu)R^{\rm bal}[\bar\nu]\rangle|,
\qquad
T_2:=|\langle\varphi,\mu-\nu\rangle|.
\]
\emph{Bound on $T_2$.} Since $\varphi$ is bounded and globally Lipschitz with $\mathrm{Lip}(\varphi)\le\|\nabla\varphi\|_\infty$, we have $\varphi\in\mathrm{BL}_q(\X)$ with $\|\varphi\|_{\mathrm{BL},q}\le\|\varphi\|_\infty+\|\nabla\varphi\|_\infty=:C_\varphi$, and
\begin{equation}\label{eq:app-recomb-T2}
T_2\le\|\varphi\|_{\mathrm{BL},q}\,d_{\mathrm{BL},q}(\mu,\nu)\le C_\varphi\,d_{q,2}^*(\mu,\nu).
\end{equation}
\emph{Bound on $T_1$.} Decompose
\begin{align*}
\mu(\X)\beta(\bar\mu)R^{\rm bal}[\bar\mu]-\nu(\X)\beta(\bar\nu)R^{\rm bal}[\bar\nu]
&=(\mu(\X)-\nu(\X))\beta(\bar\mu)R^{\rm bal}[\bar\mu]\\
&\quad+\nu(\X)(\beta(\bar\mu)-\beta(\bar\nu))R^{\rm bal}[\bar\mu]\\
&\quad+\nu(\X)\beta(\bar\nu)\bigl(R^{\rm bal}[\bar\mu]-R^{\rm bal}[\bar\nu]\bigr).
\end{align*}
Hence
\begin{align*}
T_1&\le|\mu(\X)-\nu(\X)|\,\beta(\bar\mu)\,|\langle\varphi,R^{\rm bal}[\bar\mu]\rangle|+\nu(\X)\,|\beta(\bar\mu)-\beta(\bar\nu)|\,|\langle\varphi,R^{\rm bal}[\bar\mu]\rangle|\\
&\quad+\nu(\X)\,\beta(\bar\nu)\,|\langle\varphi,R^{\rm bal}[\bar\mu]-R^{\rm bal}[\bar\nu]\rangle|.
\end{align*}
On $\M_{q,+}^{r_R(r,m_0),m_0}$, $|\langle\varphi,R^{\rm bal}[\bar\mu]\rangle|\le\|\varphi\|_\infty$ (since $R^{\rm bal}[\bar\mu]\in\P(\X)$), and $\beta(\bar\mu),\beta(\bar\nu)\le\beta_{r_R(r,m_0)/m_0}$. Moreover, by Assumption~\ref{ass:recombination-section-ass}(iv), $|\beta(\bar\mu)-\beta(\bar\nu)|\le L_\beta(r_R(r,m_0)/m_0)\,W_2(\bar\mu,\bar\nu)$.

For the difference $R^{\rm bal}[\bar\mu]-R^{\rm bal}[\bar\nu]$, we use the Lipschitz mixing assumption and pair-law Lipschitzness to obtain a $W_2$-Lipschitz bound on $R^{\rm bal}$. Let $(X,Y)\sim\overline{\Gamma^{\bar\mu}}$ and $(X',Y')\sim\overline{\Gamma^{\bar\nu}}$ be coupled optimally for $W_2$ on $\X\times\X$ (with product metric), and share $\xi$ across the two systems. Then, by Assumption~\ref{ass:recombination-section-ass}(ii),
\begin{align*}
W_2^2(R^{\rm bal}[\bar\mu],R^{\rm bal}[\bar\nu])
&\le\E\|b(X,Y;\xi)-b(X',Y';\xi)\|^2\\
&\le L_b\,\E(\|X-X'\|^2+\|Y-Y'\|^2)
=L_b\,W_2^2\bigl(\overline{\Gamma^{\bar\mu}},\overline{\Gamma^{\bar\nu}}\bigr).
\end{align*}
By Assumption~\ref{ass:recombination-section-ass}(i), $W_2\bigl(\overline{\Gamma^{\bar\mu}},\overline{\Gamma^{\bar\nu}}\bigr)\le L_\Lambda\,W_2(\bar\mu,\bar\nu)$, hence
\[
W_2(R^{\rm bal}[\bar\mu],R^{\rm bal}[\bar\nu])\le\sqrt{L_b}\,L_\Lambda\,W_2(\bar\mu,\bar\nu).
\]
Recall that by the monotonicity of the $p$-Wasserstein distance, $W_1(\bar\mu,\bar\nu)\le W_2(\bar\mu,\bar\nu)$ for all $\bar\mu,\bar\nu\in\P_2(\X)$. Hence, by the previous two displays and the fact that $\varphi$ is Lipschitz,
\begin{align*}
|\langle\varphi,R^{\rm bal}[\bar\mu]-R^{\rm bal}[\bar\nu]\rangle|
&\le\mathrm{Lip}(\varphi)\,W_1(R^{\rm bal}[\bar\mu],R^{\rm bal}[\bar\nu])
\le\mathrm{Lip}(\varphi)\,W_2(R^{\rm bal}[\bar\mu],R^{\rm bal}[\bar\nu])\\
&\le\mathrm{Lip}(\varphi)\sqrt{L_b}\,L_\Lambda\,W_2(\bar\mu,\bar\nu).
\end{align*}
Finally, since the constant function $\mathbf 1_\X\in\mathrm{BL}_q(\X)$ with $\|\mathbf 1_\X\|_{\mathrm{BL},q}=1$,
\[
|\mu(\X)-\nu(\X)|\le d_{\mathrm{BL},q}(\mu,\nu).
\]
Collecting these bounds,
\begin{align}\label{eq:app-recomb-T1}
T_1&\le d_{\mathrm{BL},q}(\mu,\nu)\,\beta_{r_R(r,m_0)/m_0}\,\|\varphi\|_\infty+\nu(\X)\,L_\beta(r_R(r,m_0)/m_0)\,\|\varphi\|_\infty\,W_2(\bar\mu,\bar\nu)\nonumber\\
&\quad+\nu(\X)\,\beta_{r_R(r,m_0)/m_0}\,\mathrm{Lip}(\varphi)\sqrt{L_b}\,L_\Lambda\,W_2(\bar\mu,\bar\nu)\nonumber\\
&=:C'_{\varphi,r,m_0}\,d_{\mathrm{BL},q}(\mu,\nu)+C_{\varphi,r,m_0}\,W_2(\bar\mu,\bar\nu),
\end{align}
upon absorbing $\mu(\X),\nu(\X)\le r_R(r,m_0)$ into the constants.

\emph{Combining.} Adding \eqref{eq:app-recomb-T2} and \eqref{eq:app-recomb-T1},
\begin{align*}
|G_R[\mu](\varphi)-G_R[\nu](\varphi)|
&\le C'_{\varphi,r,m_0}\,d_{\mathrm{BL},q}(\mu,\nu)+C_{\varphi,r,m_0}\,W_2(\bar\mu,\bar\nu)+C_\varphi\,d_{\mathrm{BL},q}(\mu,\nu)\\
&\le L_{R,\varphi}(r,m_0)\,d_{q,2}^*(\mu,\nu),
\end{align*}
which establishes~(A4). The balanced case follows identically by taking $\beta\equiv 1$ and $\mu,\nu\in\P_q^{r_R(r)}(\X)$ (no mass restriction). This completes the proof.
\end{proof}

\section{Numerical verification}\label{app:numerical-verification}

In this appendix, we perform a numerical sanity-check of Theorem~\ref{thm:mean-field-convergence} for three different EAs. The example algorithms are CBO, CMA-ES and recombinative ES. CBO has established existing theory, which makes it an appealing starting point. CMA-ES is an example of a parametric EA. The third example utilizes selection, recombination and mutation operators, simultaneously demonstrating the different types of operators in one algorithm.

\subsubsection*{Experimental protocol and reproducibility.}
All numerical experiments are synthetic sanity checks; no datasets, train/test
splits, or pretrained models are used. The objective is the \(d=4\) Ackley
function
\[
f(y)
=
-20\exp\!\left(-0.2\sqrt{\frac1d\sum_{i=1}^d y_i^2}\right)
-\exp\!\left(\frac1d\sum_{i=1}^d \cos(2\pi y_i)\right)
+20+e,
\]
with global minimizer \(y_* = 0\) and \(f_*=0\). All runs use seeds
\(0,\ldots,9\). Initial particle populations are sampled from a Gaussian
with mean \(16\mathbf 1\) and coordinatewise standard deviation \(16\),
then clipped to the box
\[
D=[-32.768,32.768]^4.
\]
The source file \texttt{optimization\_experiments.py} is the canonical
script for reproducing Figure~\ref{fig:numerical-verification}; it contains
the objectives, seeds, update rules, hyperparameters, and plotting commands.

\begin{table}[htbp]
\centering
\caption{Numerical settings used for Figure~\ref{fig:numerical-verification}.}
\label{tab:numerical-settings}
\footnotesize
\setlength{\tabcolsep}{4pt}
\renewcommand{\arraystretch}{1.15}
\begin{tabularx}{\linewidth}{@{}L{0.16\linewidth} L{0.27\linewidth} X@{}}
\toprule
\textit{Method} & \textit{Quantity} & \textit{Value} \\
\midrule
CBO
& particles, steps, step size & \(N=500,\ K=200,\ \Delta t=0.1\) \\
& drift and diffusion & \(\lambda=2.0,\ \sigma=0.75\) \\
& inverse-temperature schedule
& \(\alpha_k=\min\{10^6,\,20\cdot1.15^k\}\) \\
& plotted Lyapunov
& \(N^{-1}\sum_i\|X_k^i-y_*\|^2\) \\
\midrule
CMA-ES-type flow
& offspring, selected parents, generations
& \(\lambda=18,\ \mu=9,\ K=100\) \\
& recombination weights
& \(w_i\propto \log(\mu+\tfrac12)-\log i,\ \sum_i w_i=1\) \\
& learning rates
& \(c_m=1,\ c_\sigma=c_c=0.25,\ \eta_\sigma=0.12,\ c_1=0.18,\ c_\mu=0.22\) \\
& initialization
& \(m_0=16\mathbf 1,\ C_0=I_4,\ \sigma_0=16\) \\
& plotted Lyapunov
& \(\|m_k-y_*\|^2+\sigma_k^2\operatorname{Tr}(C_k)\) \\
\midrule
Recombinative ES
& particles and generations & \(N=500,\ K=100\) \\
& selection & Boltzmann weights, \(\alpha_{\rm sel}=0.5\) \\
& recombination & arithmetic crossover, \(\xi\sim\mathrm{Beta}(2,2)\) \\
& mutation/refinement
& local linear derivative-free gradient estimate, step \(\beta_h=0.05\) \\
& local regression parameters
& bandwidth \(10\), ridge \(10^{-1}\) \\
& exploration noise
& \(\sigma_k=3.0\cdot 0.9^k\) \\
& plotted Lyapunov
& \(N^{-1}\sum_i\|X_k^i-y_*\|^2\) \\
\bottomrule
\end{tabularx}
\end{table}

\subsubsection*{Statistical variability.}
The numerical experiments are illustrative sanity checks rather than benchmark
comparisons, so we do not perform hypothesis tests or report confidence
intervals. For each of the three methods we run ten independent seeds,
which vary both the random initialization and the algorithmic randomness.
Figure~\ref{fig:numerical-verification} plots all ten seed trajectories directly
on a semi-log scale, rather than only a mean curve with error bars. Thus the
displayed variability is seed-to-seed variability across complete optimization
runs.

\subsubsection*{Compute resources.}
All experiments are CPU-only NumPy simulations; no GPU, TPU, distributed
compute, external datasets, or pretrained models are used. The canonical
script \texttt{optimization\_experiments.py} runs ten independent seeds for
each of the three algorithms. 

\subsubsection*{Code availability.}
\begin{sloppypar}
The supplementary material contains anonymized Python scripts that reproduce
the numerical figures. The canonical script for
Figure~\ref{fig:numerical-verification} is
\texttt{optimization\_experiments.py}, which generates the CBO, CMA-ES, and
recombinative ES Lyapunov plots. The code uses only NumPy
and Matplotlib as dependencies and it does not require external datasets or pretrained models.
\end{sloppypar}

\subsection{CBO}

In CBO, the population is updated using a mutation. We denote the population by $(X_k^i)_{i=1}^N$. We compute a Boltzmann-weighted consensus point

\begin{equation*}
v_{\alpha_k}(\mu_k)
=
\frac{\sum_{i=1}^N X_k^i e^{-\alpha_k(f(X_k^i)-\min_j f(X_k^j))}}
{\sum_{i=1}^N e^{-\alpha_k(f(X_k^i)-\min_j f(X_k^j))}},
\end{equation*}
where $\alpha_k$ is a temperature hyperparameter and $N$ is the population size. The population members are then updated according to
\[
X_{k+1}^i
=
X_k^i-\lambda\bigl(X_k^i-v_{\alpha_k}(\mu_k)\bigr)\Delta t
+\sigma\|X_k^i-v_{\alpha_k}(\mu_k)\|\sqrt{\Delta t}\,\xi_k^i
,
\]
where $\Delta t$ time step, $\lambda$ a hyperparameter that controls how strongly population members are driven toward the consensus point.

In the case of CBO experiments, we choose $\Psi(y)= \| y - y_* \|^2$. The Lyapunov functional and search error both reduce to $\frac{1}{N} \sum_{i=1}^{N}\| X_k^i - y_* \|^2$.
Figure~\ref{fig:numerical-verification}a shows the development of the Lyapunov functional $\mathcal V_\Upsilon(\bar \mu_k)$ across optimization runs with 10 different seeds. The experiments use exponentially increasing $\alpha_k$ to help with convergence, because using a fixed $\alpha$ caused runs to stagnate somewhere in the neighborhood of the optimum. Figure~\ref{fig:numerical-verification}a exhibits exponential convergence, which is consistent with Theorem~\ref{thm:mean-field-convergence}.

\subsection{CMA-ES-type parameter flow}

In this example, we use rank-$\mu$ CMA-ES-type flow similar to~\cite{hansenCMAEvolutionStrategy2023}. This is an example of a parametric method.

Let \(w_1\ge\cdots\ge w_\mu>0\) satisfy \(\sum_{i=1}^\mu w_i=1\);
in the experiments we use logarithmic weights
\(w_i\propto \log(\mu+1/2)-\log i\). Define the effective selection size $\mu_{\rm eff}=\Big(\sum_{i=1}^\mu w_i^2\Big)^{-1}.$
Let $Z=(Z_1,\dots,Z_\lambda)$, where $Z_i \sim \mathcal N(0, I_d)$.

For each generation, we sample solution candidates
\begin{equation*}
  Y_i = m + \sigma C^{1/2} Z_i, \quad i=1,\dots,\lambda.
\end{equation*}
Let $z_{1:\lambda} = \pi_Y(Z)$, where $\pi_Y$ is a permutation that arranges $(f(Y_i))_{i=1}^\lambda$ in an ascending order.
Write
\begin{align*}
z_w=\sum_{i=1}^\mu w_i z_{i:\lambda}, \qquad
y_w=C^{1/2}z_w,
\qquad
M_w=\sum_{i=1}^\mu w_i z_{i:\lambda}z_{i:\lambda}^\top.
\end{align*}
Fix hyperparameters $c_m,c_\sigma,c_c,\eta_\sigma,c_1,c_\mu>0$. The model parameters are updated according to
\begin{align*}
m^+
&=
m+\tau c_m\sigma y_w,
\\
p_\sigma^+
&=
p_\sigma+\tau\Big(-c_\sigma p_\sigma+\sqrt{c_\sigma(2-c_\sigma)\mu_{\rm eff}}\,z_w\Big),
\\
p_c^+
&=
p_c+\tau\Big(-c_c p_c+\sqrt{c_c(2-c_c)\mu_{\rm eff}}\,y_w\Big),
\\
\sigma^+
&=
\sigma+\tau\eta_\sigma\sigma\Big(\frac{\|p_\sigma\|^2}{d}-1\Big),
\\
C^+
&=
C+\tau\Big(c_1(p_cp_c^\top-C)+c_\mu(C^{1/2}M_wC^{1/2}-C)\Big).
\end{align*}

In the case of CMA-ES, choosing $\Psi(y)=\|y-y_*\|^2$ yields Lyapunov functional of the form
\begin{equation*}
  \mathcal V_\Upsilon(\bar \mu_t) = \| m-y_*\|^2 + \sigma^2 \operatorname{Tr} (C).
\end{equation*}
Numerical results on the development of $\mathcal V_\Upsilon(\bar \mu_t)$ using CMA-ES on a 4D Ackley function are presented in Figure~\ref{fig:numerical-verification}b. The results show exponential convergence, which is consistent with Theorem~\ref{thm:mean-field-convergence}.

\subsection{Recombinative ES}

In this example, we use a recombinative ES with learned local gradient mutation. Let us denote the population by $(X_k^i)_{i=1}^N$. We compute Boltzmann weights as
\[
\omega_k^i
=
\frac{\exp\{-\alpha_{\rm sel}(f(X_k^i)-\min_j f(X_k^j))\}}
{\sum_{\ell=1}^N
\exp\{-\alpha_{\rm sel}(f(X_k^\ell)-\min_j f(X_k^j))\}}.
\]
Parents are drawn independently according to these weights. Arithmetic recombination is performed as
\begin{equation*}
  Z_k^i = \xi_k^i X^{I_k^i}_k + (1-\xi_k^i) X_k^{J_k^i},\quad \xi_k^i \sim \operatorname{Beta}(b_1,b_2)
\end{equation*}
where $I_k^i$ and $J_k^i$ represent the indices of the parents and $b_1, b_2$ are parameters of Beta-distribution.
The next generation is computed by applying mutation on the offspring,
\begin{equation*}
  X_{k+1}^i = Z_k^i -\beta \hat g(Z_k^i, X_k) + \sigma \eta_k, \quad \eta_k \sim \mathcal{N}(0, I_d),
\end{equation*}
where $\beta$ denotes step size and $\hat g(Z_k^i, X_k) \approx \nabla f(Z_k^i)$ computes a local approximation of the gradient of the objective function following the approach by~\citet{mukherjeeLearningCoordinateCovariances2006}.

As with the CBO experiments earlier, we choose $\Psi(y)= \| y - y_* \|^2$, which yields $\mathcal V_\Upsilon(\bar \mu_t)=\frac{1}{N} \sum_{i=1}^{N}\| X_k^i - y_* \|^2$.
Figure~\ref{fig:numerical-verification}c shows the development of $\mathcal V_\Upsilon(\bar \mu_t)$ across optimization runs with 10 different seeds. We observe that the Lyapunov functional decays at an exponential rate, as predicted by Theorem~\ref{thm:mean-field-convergence}.

\end{document}